\newcommand{\p}{\mathbb{P}}
\newcommand{\R}{\mathbb{R}}
\newcommand{\N}{\mathbb{N}}
\newcommand{\E}{\mathbb{E}}
\newcommand{\x}{\mathbb{X}}
\newcommand{\M}{\mathcal{M}}
\newcommand{\pen}{\textrm{pen}}
\newcommand{\Var}{\textrm{Var}}
\newtheorem{theo}{Theorem}[section]
\newtheorem{coro}[theo]{Corollary}
\newtheorem{prop}[theo]{Proposition}
\newtheorem{lemma}[theo]{Lemma}
\newenvironment{proof}[2][Proof]{ 
\begin{trivlist} 
\item[\hskip \labelsep {\bfseries \itshape \textit{#1  #2}}]~\\ \indent}
{\end{trivlist}}
\begin{document}
\title{Optimal model selection in density estimation}
\date{}
\author{Matthieu Lerasle\footnote{IME-USP, granted by FAPESP Processo 2009/09494-0}}
\maketitle
\hspace{1cm}\begin{minipage}{12cm}
\begin{center}
{\bf Abstract}
\end{center}
{\small We build penalized least-squares estimators using the slope heuristic and resampling penalties. We prove oracle inequalities for the selected estimator with leading constant asymptotically equal to $1$. We compare the practical performances of these methods in a short simulation study.}
\end{minipage}
\vspace{0.5cm}

\noindent {\bf Key words:} Density estimation, optimal model selection, resampling methods, slope heuristic.

\noindent {\bf 2000 Mathematics Subject Classification:} 62G07, 62G09.

\section{Introduction}
The aim of model selection is to construct data-driven criteria to select a model among a given list. The history of statistical model selection goes back at least to Akaike \cite{Ak70}, \cite{Ak73} and Mallows \cite{Ma73}. They proposed to select among a collection of parametric models the one which minimizes an empirical loss plus some penalty term proportional to the dimension of the model. Birg\'e $\&$ Massart \cite{BM97} and Barron, Birg\'e $\&$ Massart \cite{BBM99} generalized this approach, making in particular the link between model selection and adaptive estimation. They proved that previous methods, in particular cross-validation (see Rudemo \cite{Ru82}) and hard thresholding (see Donoho {\it et.al.} \cite{DJKP}) can be viewed as penalization methods. More recently, Birg\'e $\&$ Massart \cite{BM07}, Arlot $\&$ Massart \cite{AM08} and Arlot \cite{Ar08}, (see also \cite{Ar07})  arised the problem of optimal efficient model selection. Basically, the aim is to select an estimator satisfying an oracle inequality with leading constant asymptotically equal to $1$. They obtained such procedures thanks to a sharp estimator of the ideal penalty $\pen_{id}$. We will be interested in two natural ideas, that are used in practice to evaluate $\pen_{id}$ and proved to be efficient in other frameworks. The first one is the slope heuristic. It was introduced in Birg\'e $\&$ Massart \cite{BM07} in Gaussian regression and developed in Arlot $\&$ Massart \cite{AM08} in a $M$-estimation framework. It allows to optimize the choice of a leading constant in the penalty term, provided that we know the shape of $\pen_{id}$. The other one is Efron's resampling heuristic. The basic idea comes from Efron \cite{Ef83} and was used by Fromont \cite{Fr04} in the classification framework. Then, Arlot \cite{Ar08} made the link with ideal penalties and developed the general procedure. Up to our knowledge, these methods have only been theoretically validated in regression frameworks. We propose here to prove their efficiency in density estimation. Let us now explain more precisely our context.

\subsection{Least-squares estimators}
In this paper, we define and study efficient penalized least-squares estimators in the density estimation framework when the error is measured with the $L^2$-loss. We observe $n$ i.i.d random variables $X_1,...,X_n$, defined on a probability space $(\Omega,\mathcal{A},\p)$, valued in a measurable space $(\x,\mathcal{X})$, with common law $P$. We assume that a measure $\mu$ on $(\x,\mathcal{X})$ is given and we denote by $L^2(\mu)$ the Hilbert space of square integrable real valued functions defined on $\x$. $L^2(\mu)$ is endowed with its classical scalar product, defined for all $t,t'$ in $L^2(\mu)$ by
$$<t,t'>=\int_{\x}t(x)t'(x)d\mu(x)$$
and the associated $L^2$-norm $\|.\|$, defined for all $t$ in $L^2(\mu)$ by $\|t\|=\sqrt{<t,t>}$. The parameter of interest is the density $s$ of $P$ with respect to $\mu$, we assume that it belongs to $L^2(\mu)$. The risk of an estimator $\hat{s}$ of $s$ is measured with the $L^2$-loss, that is $\|s-\hat{s}\|^2$, which is random when $\hat{s}$ is.\\
$s$ minimizes the integrated quadratic contrast $PQ(t)$, where $Q:L^2(\mu)\rightarrow L^1(P)$ is defined for all $t$ in $L^2(\mu)$ by $Q(t)=\|t\|^2-2t$. Hence, density estimation is a problem of $M$-estimation. These problems are classically solved in two steps. First, we choose a "model" $S_m$ that should be close to the parameter $s$, which means that $\inf_{t\in S_m}\|s-t\|^2$ is "small". Then, we minimize over $S_m$ the empirical version of the integrated contrast, that is, we choose 
\begin{equation}\label{LSE}
\hat{s}_m\in\arg\min_{t\in S_m} P_nQ(t). 
\end{equation}
This last minimization can be computationaly untractable for general sets $S_m$, leading to untractable procedures in practice. However, it can be easily solved when $S_m$ is a linear subspace of $L^2(\mu)$ since, for all orthonormal basis $(\psi_{\lambda})_{\lambda\in m}$, 
\begin{equation}\label{hsm}
\hat{s}_m=\sum_{\lambda\in m}(P_n\psi_{\lambda})\psi_{\lambda}.
\end{equation}
Thus, we will always assume that a model is a linear subspace in $L^2(\mu)$. The risk of the least-squares estimator $\hat{s}_m$ defined in (\ref{LSE}) is then decomposed in two terms, called bias and variance, thanks to Pythagoras relation. Let $s_m$ be the orthogonal projection of $s$ onto $S_m$,
$$\|s-\hat{s}_m\|^2=\|s-s_m\|^2+\|s_m-\hat{s}_m\|^2.$$ 
The statistician should choose a space $S_m$ realizing a trade-off between those terms. $S_m$ must be sufficiently ``large'' to ensure a small bias $\|s-s_m\|^2$, but not too much, for the variance $\|s_m-\hat{s}_m\|^2$ not to explose. The best trade-off depends on unknown properties of $s$, since the bias is unknown, and on the behavior of the empirical minimizer $\hat{s}_m$ in the space $S_m$. Classically, $S_m$ is a parametric space and the dimension $d_m$ of $S_m$ as a linear space is used to give upper bounds on $D_m=n\E\left(\|s_m-\hat{s}_m\|^2\right)$. This approach is validated in regular models under the assumption that the support of $s$ is a known compact, as mentioned in section \ref{Ch2S3}. However, this definition can fail dramatically because there exist simple models (histograms with a small dimension $d_m$) where $D_m$ is very large, and infinite dimensional models where $D_m$ is easily upper bounded. This issue is extensively discussed in Birg\'e \cite{Bi08}. Birg\'e chooses to keep the dimension $d_m$ of $S_m$ as a complexity measure and build new estimators that achieve better risk bounds than the empirical minimizer. His procedures are unfortunatly untractable for the practical user because he can only prove the existence of his estimators. Even his bounds on the risk are only interesting theoretically because they involve constants which are not optimal. We will not take this point of view here and our estimator will always be the empirical minimizer, mainly because it can easily be computed, see (\ref{hsm}). We will focus on the quantity $D_m/n$ and introduce a general Assumption (namely Assumption {\bf[V]}) that allows to work indifferently with $D_m/n$ or with the actual risk $\|s_m-\hat{s}_m\|^2$. We will also provide and study an estimator of $D_m/n$ based on the resampling heuristic.\\
We insist here on the fact that, unlike classical methods, we will not use in this paper strong extra assumptions on $s$, like $\left\|s\right\|_{\infty}<\infty$ or assume that $s$ is compactly supported.

\subsection{Model selection}
Recall that the choice of an optimal model $S_m$ is impossible without strong assumptions on $s$, for example a precise information on its regularity. However, under less restrictive hypotheses, we can build a countable collection of models $(S_m)_{m\in\M_n}$, growing with the number of observations, such that the best estimator in the associated collection $(\hat{s}_m)_{m\in\M_n}$ realizes an optimal trade-off, see for example Birg\'e $\&$ Massart \cite{BM97} and Barron, Birg\'e $\&$ Massart \cite{BBM99}. The aim is then to build an estimator $\hat{m}$ such that our final estimator, $\tilde{s}=\hat{s}_{\hat{m}}$ behaves almost as well as any model $m_o$ in the set of oracles
$$\M_n^*=\{m_o\in\M_n,\;\|\hat{s}_{m_o}-s\|^2=\inf_{m\in\M_n}\|\hat{s}_m-s\|^2\}.$$
This is the problem of model selection. More precisely, we want that $\tilde{s}$ satisfies an oracle inequality defined in general as follows.
\vspace{0.1cm}\\
{\bf Definition:} {\it (Trajectorial oracle inequality) Let $(p_n)_{n\in \N}$ be a summable sequence and let $(C_n)_{n\in \N}$ and $(R_{m,n})_{n\in \N}$ be sequences of positive real numbers. The estimator $\tilde{s}=\hat{s}_{\hat{m}}$ satisfies a trajectorial oracle inequality $TO(C_n,(R_{m,n})_{m\in\M_n},p_n)$ if
\begin{equation}\label{OI}
\forall n\in \N^*,\;\p\left(\|\tilde{s}-s\|^2> C_n\inf_{m\in\M_n}\left\{\|s-\hat{s}_{m}\|^2+R_{m,n}\right\}\right)\leq p_n.
\end{equation}
When $\tilde{s}$ satisfies an oracle inequality, $C_n$ is called the leading constant.}
\vspace{0.1cm}\\
In this paper, we are interested in the problem of optimal model selection defined as follows.
\vspace{0.1cm}\\
{\bf Definition:} {\it (Optimal model selection) We say that $\tilde{s}$ is optimal or that the procedure of selection $(X_1,...,X_n)\mapsto \hat{m}$ is optimal when $\tilde{s}$ satisfies a trajectorial oracle inequality $TO(1+r_n,(R_{m,n})_{m\in\M_n},p_n)$ with $r_n\rightarrow 0$ and for all $n$ in $\N^*$ and $m$ in $\M_n$ $R_{m,n}=0$. In order to simplify the notations, when $\tilde{s}$ is optimal we will say that $\tilde{s}$ satisfies an optimal oracle inequality $OTO(r_n,p_n)$.}
\vspace{0.1cm}\\
In order to build $\hat{m}$, we remark that, for all $m$ in $\M_n$,
\begin{equation}\label{etoile}
\|s-\hat{s}_m\|^2=\|\hat{s}_m\|^2-2P\hat{s}_m+\|s\|^2=P_nQ(\hat{s}_m)+2\nu_n(\hat{s}_m)+\|s\|^2,
\end{equation}
where $\nu_n=P_n-P$ is the centered empirical process. An oracle minimizes $\|s-\hat{s}_m\|^2$ and thus $P_nQ(\hat{s}_m)+2\nu_n(\hat{s}_m)$. As we want to imitate the oracle, we will design a map $\pen :\M_n\rightarrow \R^+$ and choose
\begin{equation}\label{plse}
\hat{m}\in \arg\min_{m\in\M_n}P_nQ(\hat{s}_m)+\pen(m),\;\tilde{s}=\hat{s}_{\hat{m}}.
\end{equation}
It is clear that the ideal penalty is $\pen_{id}(m)=2\nu_n(\hat{s}_m)$. For all $m$ in $\M_n$, for all orthonormal basis $(\psi_{\lambda})_{\lambda\in m}$, $\hat{s}_m=\sum_{\lambda\in m} (P_n\psi_{\lambda})\psi_{\lambda}$ and $s_m=\sum_{\lambda\in m} (P\psi_{\lambda})\psi_{\lambda}$, thus 
$$\nu_n(\hat{s}_m-s_m)=\nu_n\left(\sum_{\lambda\in m} (\nu_n\psi_{\lambda})\psi_{\lambda}\right)=\sum_{\lambda\in m} (\nu_n\psi_{\lambda})^2=\|\hat{s}_m-s_m\|^2.$$
Let us define, for all $m$ in $\M_n$
$$p(m)=\nu_n(\hat{s}_m-s_m)=\|\hat{s}_m-s_m\|^2.$$
From (\ref{etoile}), for all $m$ in $\M_n$,
\begin{eqnarray*}
\left\|s-\tilde{s}\right\|^2&=&\|\tilde{s}\|^2-2P\tilde{s}+\left\|s\right\|^2=\|\tilde{s}\|^2-2P_n\tilde{s}+2\nu_n\tilde{s}+\left\|s\right\|^2\nonumber\\
&\leq&P_nQ(\hat{s}_{m})+\pen(m)+\left(2\nu_n(\tilde{s})-\pen(\hat{m})\right)+\left\|s\right\|^2\nonumber\\
&=&\left\|s-\hat{s}_{m}\right\|^2+\left(\pen(m)-2\nu_n(\hat{s}_{m})\right)+\left(2\nu_n(\tilde{s})-\pen(\hat{m})\right)
\end{eqnarray*}
Hence, for all $m$ in $\M_n$,
\begin{equation}\label{decRis}
\left\|s-\tilde{s}\right\|^2\leq\left\|s-\hat{s}_{m}\right\|^2+\left(\pen(m)-2p(m)\right)+\left(2p(\hat{m})-\pen(\hat{m})\right)+2\nu_n(s_{\hat{m}}-s_{m}).
\end{equation}
Let us define, for all $c_1,c_2>0$, the function 
\begin{equation}\label{fcc}
f_{c_1,c_2}:\R^+\rightarrow \R^+,\;x\mapsto \left\{
\begin{array}{lcc}
\frac{1+c_1x}{1-c_2x}-1&\textrm{if}&x<1/c_2\\
+\infty&\textrm{if}&x\geq 1/c_2
\end{array}\right. .
\end{equation}
It comes from inequality (\ref{decRis}) that $\tilde{s}$ satisfies an oracle inequality $OTO(f_{2,2}(\epsilon_n),p_n)$ as soon as, with probability larger than $1-p_n$
\begin{eqnarray}
\forall m\in \M_n\; \frac{\left|2p(m)-\pen(m)\right|}{\left\|s-\hat{s}_{m}\right\|^2}\leq \epsilon_n\;\textrm{and}\label{Main}\\
\forall (m,m')\in \M_n^2,\;\frac{2\nu_n(s_{m'}-s_{m})}{\left\|s-\hat{s}_{m'}\right\|^2+\left\|s-\hat{s}_{m}\right\|^2}\leq \epsilon_n.\label{Couple}
\end{eqnarray}
Inequality (\ref{Couple}) does not depend on our choice of penalty, we will check that it can easily be satisfied in classical collections of models. In order to obtain inequality (\ref{Main}), we use two methods, defined in $M$-estimation, but studied only on some regression frameworks.

\subsubsection{The slope heuristic}\label{ISH}
The first one is refered as the "slope heuristic". 
The idea has been introduced by Birg\'e $\&$ Massart \cite{BM07} in the Gaussian regression framework and developed in a general algorithm by Arlot $\&$ Massart \cite{AM08}. This heuristic states that there exist a sequence $(\Delta_m)_{m\in \M_n}$ and a constant $K_{\min}$ satisfying the following properties,
\begin{enumerate}
\item when $\pen(m)<K_{\min}\Delta_m$, then $\Delta_{\hat{m}}$ is too large, typically $\Delta_{\hat{m}}\geq C\max_{m\in\M_n}\Delta_m$,
\item when $\pen(m)\simeq(K_{\min}+\delta)\Delta_m$ for some $\delta>0$, then $\Delta_{\hat{m}}$ is much smaller,
\item when  $\pen(m)\simeq2K_{\min}\Delta_m$, the selected estimator is optimal.
\end{enumerate}
Thanks to the third point, when $\Delta_m$ and $K_{\min}$ are known, this heuristic says that the penalty $\pen(m)=2K_{\min}\Delta_m$ selects an optimal estimator. When $\Delta_m$ only is known, the first and the second point can be used to calibrate $K_{\min}$ in practice, as shown by the following algorithm (see Arlot $\&$ Massart \cite{AM08}):

\vspace{0.2cm}

\noindent{\bf Slope algorithm}\\
For all $K>0$, compute the selected model $\hat{m}(K)$ given by (\ref{plse}) with the penalty $\pen(m)=K\Delta_m$ and the associated complexity $\Delta_{\hat{m}(K)}$.\\
Find the constant $K_{\min}$ such that $\Delta_{\hat{m}(K)}$ is large when $K<K_{\min}$, and "much smaller" when $K>K_{\min}$.\\
Take the final $\hat{m}=\hat{m}(2K_{\min})$.

\vspace{0.2cm}

\noindent We will justify the slope heuristic in the density estimation framework for $\Delta_m=\E(\|s_m-\hat{s}_m\|^2)=D_m/n$ and $K_{\min}=1$. In general, $D_m$ is unknown and has to be estimated, we propose a resampling estimator and prove that it can be used without extra assumptions to obtain optimal results.

\subsubsection{Resampling penalties}
Data-driven penalties have already been used in density estimation in particular cross-validation methods as in Stone \cite{St74}, Rudemo \cite{Ru82} or Celisse \cite{Ce08}. We are interested here in the resampling penalties introduced  by Arlot \cite{Ar08}. Let $(W_1,...,W_n)$ be a resampling scheme, i.e. a vector of random variables independent of $X,X_1,...,X_n$ and exchangeable, that is, for all permutations $\tau$ of $(1,...,n)$,
$$(W_1,...,W_n)\;\textrm{has the same law as}\; (W_{\tau(1)},...,W_{\tau(n)}).$$
Hereafter, we denote by $\bar{W}_n=\sum_{i=1}^nW_i/n$ and by $E^W$ and $\mathcal{L}^W$ respectively the expectation and the law conditionally to the data $X,X_1,...,X_n$. Let $P_n^W=\sum_{i=1}^nW_i\delta_{X_i}/n$, $\nu_n^W=P_n^W-\bar{W}_nP_n$ be the resampled empirical processes. Arlot's procedure is based on the resampling heurististic of Efron (see Efron \cite{Ef79}), which states that the law of a functional $F(P,P_n)$ is close to its resampled counterpart, that is the conditional law $\mathcal{L}^W(C_WF(\bar{W}_nP_n,P^W_n))$. $C_W$ is a renormalizing constant that depends only on the resampling scheme and on $F$. Following this heuristic, Arlot defines as a penalty  the resampling estimate of the ideal penalty $2D_m/n$, that is 
\begin{equation}\label{Penm}
\pen(m)=2C_W\E^W(\nu_n^W(\hat{s}^W_{m})),
\end{equation}
where $\hat{s}^W_{m}$ minimizes $P_n^WQ(t)$ over $S_m$. We prove concentration inequalities for $\pen(m)$ and deduce that $\pen(m)$ provides an optimal procedure.

\vspace{0.2cm}

\noindent The paper is organized as follows. In Section \ref{Ch2S2}, we state our main results, we prove the efficiency of the slope algorithm and the resampling penalties.\\
In Section \ref{Ch2S3}, we compute the rates of convergence in the oracle inequalities using classical collections of models. Section \ref{Ch2S4} is devoted to a short simulation study where we compare different methods in practice. The proofs are postponed to Section \ref{Ch2S5}. Section \ref{Ch2S6} is an Appendix where we add some probabilistic material, we prove a concentration inequality for $Z^2$, where $Z=\sup_{t\in B}\nu_n(t)$ and $B$ is symmetric. We deduce a simple concentration inequality for $U$-statistics of order 2 that extends a previous result by Houdr\'e $\&$ Reynaud-Bouret \cite{HRB03}.

\section{Main results}\label{Ch2S2}

Hereafter, we will denote by $c$, $C$, $K$, $\kappa$, $L,$ $\alpha$, with various subscripts some constants that may vary from line to line.

\subsection{Concentration of the ideal penalty}\label{Cip}
Take an orthonormal basis $(\psi_{\lambda})_{\lambda\in m}$ of $S_m$. Easy algebra leads to 
$$s_m=\sum_{\lambda\in m}(P\psi_{\lambda})\psi_{\lambda},\;\hat{s}_m=\sum_{\lambda\in m}(P_n\psi_{\lambda})\psi_{\lambda}, \;\textrm{thus}\; \|s_m-\hat{s}_m\|^2=\sum_{\lambda\in m}(\nu_n(\psi_{\lambda}))^2.$$
$\hat{s}_m$ is an unbiased estimator of $s_m$ and 
$$\pen_{id}(m)=2\nu_n(\hat{s}_m)=2\nu_n(\hat{s}_m-s_m)+2\nu_n(s_m)=2\|s_m-\hat{s}_m\|^2+2\nu_n(s_m).$$
For all $m,m'$ in $\M_n$, let 
\begin{equation}\label{decpen}
p(m)=\|s_m-\hat{s}_m\|^2=\sum_{\lambda\in m}(\nu_n(\psi_{\lambda}))^2,\;\delta(m,m')=2\nu_n(s_m-s_{m'}).
\end{equation}
From (\ref{decRis}), for all $m$ in $\M_n$,
\begin{equation}\label{or1}
\left\|s-\tilde{s}\right\|_2^2\leq \left\|s-\hat{s}_{m}\right\|_2^2+\left(\pen(m)-2p(m)\right)+\left(2p(\hat{m})-\pen(\hat{m})\right)+\delta(\hat{m},m).
\end{equation}
In this section, we are interested in the concentration of $p(m)$ around $\E(p(m))=D_m/n$. Let us first remark that, for all $m$ in $\M_n$, $p(m)$ is the supremum of the centered empirical process over the ellipsoid $B_m=\{t\in S_m,\;\|t\|\leq 1\}$. From Cauchy-Schwarz inequality, for all real numbers $(b_{\lambda})_{\lambda\in m}$, 
\begin{equation}\label{CSI}
\sum_{\lambda\in m}b_{\lambda}^2=\left(\sup_{\sum a_{\lambda}^2\leq 1}\sum_{\lambda\in m}a_{\lambda}b_{\lambda}\right)^2.
\end{equation}
We apply this inequality with $b_{\lambda}=\nu_n(\psi_{\lambda})$. We obtain, since the system $(\psi_{\lambda})_{\lambda\in m}$ is orthonormal,
$$\sum_{\lambda\in m}(\nu_n(\psi_{\lambda}))^2=\sup_{\sum a_{\lambda}^2\leq 1}\left(\sum_{\lambda\in m}a_{\lambda}\nu_n(\psi_{\lambda})\right)^2=\sup_{\sum a_{\lambda}^2\leq 1}\left(\nu_n\left(\sum_{\lambda\in m}a_{\lambda}\psi_{\lambda}\right)\right)^2=\sup_{t\in B_m}\left(\nu_n(t)\right)^2.$$
Hence, $p(m)$ is bounded by a Talagrand's concentration inequality (see Talagrand \cite{Ta96}). This inequality involves $D_m=n\E\left(\|\hat{s}_m-s_m\|^2\right)$ and the constants 
\begin{equation}\label{defbv}
e_m=\frac1n\sup_{t\in B_m}\left\|t\right\|^2_{\infty}\;\textrm{and}\;v_m^2=\sup_{t\in B_m}\Var(t(X)).
\end{equation}
More precisely, the following proposition holds:
\begin{prop}\label{concbasic}
Let $X,X_1,...,X_n$ be iid random variables with common density $s$ with respect to a probability measure $\mu$. Assume that $s$ belongs to $L^2(\mu)$ and let $S_m$ be a linear subspace in $L^2(\mu)$. Let $s_m$ and $\hat{s}_m$ be respectively the orthogonal projection and the projection estimator of $s$ onto $S_m$. Let $p(m)=\|s_m-\hat{s}_m\|^2$, $D_m=n\E(p(m))$ and let $v_m$, $e_m$ be the constants defined in (\ref{defbv}). Then, for all $x>0$, 
\begin{equation}\label{minpen1}
\p\left(p(m)-\frac{D_m}n>\frac{D_m^{3/4}(e_mx^2)^{1/4}+0.7\sqrt{D_mv_m^2x}+0.15v_m^2x+e_m x^2}n\right)\leq e^{-x/20}
\end{equation}
\begin{equation}\label{minpen2}
\p\left(\frac{D_m}n-p(m)>\frac{1.8D_m^{3/4}(e_mx^2)^{1/4}+1.71\sqrt{D_mv_m^2x}+4.06e_m x^2}n\right)\leq 2.8e^{-x/20}
\end{equation}
\end{prop}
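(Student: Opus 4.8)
The plan is to recognize $p(m)=\sup_{t\in B_m}\nu_n(t)^2$, as already established in the discussion preceding the statement, and to apply a sharp version of Talagrand's inequality for the supremum of a centered empirical process, applied to the class $\mathcal{F}=\{t:t\in B_m\}$ (which is symmetric, so $\sup_t \nu_n(t)$ and $|\sup_t\nu_n(t)|$ are comparable). Writing $Z=\sup_{t\in B_m}\nu_n(t)$, one has $p(m)=Z^2$, $\E(Z^2)=D_m/n$, and the two relevant scale parameters are exactly $v_m^2=\sup_{t\in B_m}\Var(t(X))$ and the sup-norm bound $ne_m=\sup_{t\in B_m}\|t\|_\infty^2$ that enters the ``wimpy variance'' and the uniform bound in Talagrand's inequality. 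The cleanest route is to use a concentration inequality for $Z^2$ directly — indeed the Appendix (Section~\ref{Ch2S6}) is advertised as proving precisely ``a concentration inequality for $Z^2$ where $Z=\sup_{t\in B}\nu_n(t)$ and $B$ is symmetric'', so I would invoke that result with $B=B_m$.

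The key steps, in order. First, verify the structural facts: $B_m$ is a symmetric subset of $S_m$, $Z=\sup_{t\in B_m}\nu_n(t)=\|\hat{s}_m-s_m\|$ by Cauchy--Schwarz as in~(\ref{CSI}), hence $p(m)=Z^2$ and $\E(p(m))=\sum_{\lambda\in m}\Var(\nu_n(\psi_\lambda))\le D_m/n$ with equality by definition of $D_m$. Second, identify the two parameters governing the deviations: the uniform bound $\sup_{t\in B_m}\|\nu_n(t)\|_\infty$ is controlled by $2\sqrt{ne_m}/n$ type quantities, and $\sup_{t\in B_m}\E(\nu_n(t)^2)\le v_m^2/n$. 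Third, feed these into the $Z^2$-concentration inequality from the Appendix (a Bousquet-type upper tail and a Klein--Rio-type lower tail for $Z$, squared): for the upper tail one gets $Z\le \E Z+\sqrt{2(\Var\text{-term})x/n}+ (\text{bound-term})x/n$ with probability $\ge 1-e^{-x/20}$ (the constant $20$ already signals that a slightly lossy but explicit version of Talagrand is being used), then square, using $\E Z\le\sqrt{D_m/n}$ and $(a+b+c)^2\le$ an expanded sum, to produce the five terms $D_m/n$, $D_m^{3/4}(e_mx^2)^{1/4}/n$, $\sqrt{D_mv_m^2x}/n$, $v_m^2x/n$, $e_mx^2/n$ appearing in~(\ref{minpen1}); the cross term $\sqrt{\E Z}\cdot(\text{bound-term})x/n$ is exactly what yields the $D_m^{1/4}(e_mx^2)^{1/4}$ shape. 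Fourth, repeat symmetrically for the lower tail of $Z$, which gives~(\ref{minpen2}) with slightly worse constants ($1.8$, $1.71$, $4.06$, and the factor $2.8$ and no linear-in-$v_m^2x$ term, consistent with Klein--Rio's asymmetric constants). The bookkeeping of which numerical constant lands where is the tedious part but is routine once the abstract inequality is in hand.

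The main obstacle is not conceptual but the tracking of explicit constants: Talagrand's inequality in its usable forms (Bousquet for the upper tail, Klein--Rio for the lower) comes with several free parameters, and producing the clean statement above with the specific constants $0.7$, $0.15$, $1.8$, $1.71$, $4.06$, $2.8$ and the uniform exponent $e^{-x/20}$ requires optimizing over those parameters and absorbing lower-order contributions — in particular converting a bound of the form $Z\le\E Z+\sqrt{cvx/n}+c'e_m x/n$ into a bound on $Z^2=p(m)$ involves expanding a square and re-deriving the exponential tail for the squared variable, where one typically loses a universal factor (hence $e^{-x/20}$ rather than $e^{-x}$). I would also need the elementary inequality that for nonnegative $a,b$, $(a+b)^2\le a^2+2ab+b^2$ is kept term-by-term rather than crudely bounded, since otherwise the sharp leading term $D_m/n$ (with constant exactly $1$, which is the whole point of the paper) would be spoiled. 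Everything else — symmetry of $B_m$, the Cauchy--Schwarz rewriting, the identification of $v_m$ and $e_m$ — is immediate from the setup already laid out in the excerpt.
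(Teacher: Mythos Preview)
Your proposal is correct and matches the paper's approach exactly: the paper's entire proof is ``a straightforward application of Corollary~\ref{C4}'' (the $Z^2$-concentration inequality in the Appendix, itself built from Bousquet and Klein--Rio as you describe), applied with $B=B_m$, $v=v_m$, $e_b=e_m$. One small clarification: the exponent $e^{-x/20}$ does not come from a lossy version of Talagrand but simply from substituting $x\mapsto x/20$ in Corollary~\ref{C4} (which is stated with $e^{-x}$) so as to obtain the tidier numerical constants $1,\,0.7,\,0.15,\,1,\,1.8,\,1.71,\,4.06$ displayed in~(\ref{minpen1})--(\ref{minpen2}).
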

{\bf Comments :} From (\ref{or1}), for all $m$ in $\M_n$,
\begin{eqnarray}
\left\|s-\tilde{s}\right\|_2^2&\leq& \left\|s-\hat{s}_{m}\right\|_2^2+\left(\pen(m)-2\frac{D_m}n\right)+2\left(\frac{D_m}n-p(m)\right)\nonumber\\
&&+2\left(p(\hat{m})-\frac{D_{\hat{m}}}n\right)+\left(2\frac{D_{\hat{m}}}n-\pen(\hat{m})\right)+\delta(\hat{m},m).\label{o2}
\end{eqnarray}
It appears from (\ref{o2}) that we can obtain oracle inequalities with a penalty of order $2D_m/n$ if, uniformly over $m,m'$ in $\M_n$,
$$p(m)-\frac{D_m}n<<\|s-\hat{s}_m\|^2\;\textrm{and}\;\delta(m',m)<<\|s-\hat{s}_m\|^2+\|s-\hat{s}_{m'}\|^2.$$
Proposition \ref{concbasic} proves that the first part holds with large probability for all $m$ in $\M_n$ such that $e_m\vee v_m^2<<n\E(\|s-\hat{s}_m\|^2)$. Actually, the other part also holds under the same kind of assumption.

\subsection{Main assumptions}\label{MainAss}
For all $m$, $m'$ in $\M_n$, let $D_m=n\E\left(\|s_m-\hat{s}_m\|^2\right)$,
$$\frac{R_m}n=\E\left(\|s-\hat{s}_m\|^2\right)=\|s-s_m\|^2+\frac{D_m}n,$$
$$v_{m,m'}^2=\sup_{t\in S_m+S_{m'},\|t\|\leq 1}\Var(t(X)), e_{m,m'}=\frac1n\sup_{t\in S_m+S_{m'},\|t\|\leq 1}\left\|t\right\|^2_{\infty}.$$
For all $k\in \N$, let $\M_n^k=\{m\in\M_n,\; R_m\in [k,k+1)\}$. For all $n$ in $\N$, for all $k>0$, $k'>0$ and $\gamma\geq 0$, let $[k]$ be the integer part of $k$ and let
\begin{equation}\label{marge}
l_{n,\gamma}(k,k')=\ln(1+{\rm Card}(\M_n^{[k]}))+\ln(1+{\rm Card}(\M_n^{[k']}))+\ln((k+1)(k'+1))+(\ln n)^{\gamma}
\end{equation}

\vspace{0.2cm}

\noindent{\bf Assumption [V]}: {\it{There exist $\gamma>1$ and a sequence $(\epsilon_n)_{n\in\N}$, with $\epsilon_n\rightarrow 0$ such that, for all $n$ in $\N$,}}
\begin{equation*}
\sup_{(k,k')\in(\N^*)^2}\sup_{(m,m')\in\M^k_n\times \M_n^{k'}}\left\{\left(\left(\frac{v_{m,m'}^2}{R_m\vee R_{m'}}\right)^2\vee \frac{e_{m,m'}}{R_m\vee R_{m'}}\right)l^2_{n,\gamma}(k,k')\right\}\leq\epsilon^4_n.
\end{equation*}
\vspace{0.2cm}\\
{\bf [BR]} {\it There exist two sequences $(h^*_n)_{n\in\N^*}$ and $(h^o_n)_{n\in\N^*}$ with $(h^o_n\vee h^*_n)\rightarrow 0$ as $n\rightarrow \infty$ such that, for all $n$ in $\N^*$, for all $m_o\in\arg\min_{m\in\M_n}R_m$ and all $m^*\in\arg\max_{m\in\M_n}D_m$,}
$$ \frac{R_{m_o}}{D_{m^*}}\leq h^o_n,\; \frac{n\|s-s_{m^*}\|^2}{D_{m^*}}\leq h^*_n.$$
\noindent{\bf Comments:} 
\begin{itemize}
\item Assumption {\bf [V]} ensures that the fluctuations of the ideal penalty are uniformly small compared to the risk of the estimator $\hat{s}_m$. Note that for all $k,k'$, $l_{n,\gamma}(k,k')\geq (\ln n)^{\gamma}$, thus, Assumption {\bf [V]} holds only in typical non parametric situations where $R_n=\inf_{m\in\M_n}R_{m}\rightarrow \infty$ as $n\rightarrow \infty$.
\item The slope heuristic states that the complexity $\Delta_{\hat{m}}$ of the selected estimator is too large when the penalty term is too small. A minimal assumption for this heuristic to hold with $\Delta_m=D_m$ would be that there exists a sequence $(\theta_n)_{n\in\N^*}$ with $\theta_n\rightarrow 0$ as $n\rightarrow \infty$ such that, for all $n$ in $\N^*$, for all $m_o\in\arg\min_{m\in\M_n}\E\left(\|s-\hat{s}_m\|^2\right)$ and all $m^*\in\arg\max_{m\in\M_n}\E\left(\|s_m-\hat{s}_m\|^2\right)$,
\begin{equation*}
D_{m_o}\leq \theta_nD_{m^*}.
\end{equation*}
Assumption {\bf [BR]} is slightly stronger but will always hold in the examples (see Section \ref{Ch2S3}).
\end{itemize}
In order to have an idea of the rates $R_n, \epsilon_n$, $h^*_n, h^o_n$ and $\theta_n$, let us briefly consider the very simple following example:
\vspace{0.2cm}\\
{\bf Example HR:} We assume that $s$ is supported in $[0,1]$ and that $(S_m)_{m\in\M_n}$ is the collection of the regular histograms on $[0,1]$, with $d_m=1,...,n$ pieces. We will see in Section \ref{THC} that $D_m\sim d_m$ asymptotically, hence $D_{m^*}\simeq n$. Moreover, we assume that $s$ is H\"olderian and not constant so that there exist positive constants $c_l,c_u, \alpha_l, \alpha_u$ such that, for all $m$ in $\M_n$, see for example Arlot \cite{Ar08},
$$c_ld_m^{-\alpha_l}\leq \|s-s_m\|^2\leq c_ud_m^{-\alpha_u}.$$
In Section \ref{THC}, we prove that this assumption implies {\bf [V]} with $\epsilon_n\leq C\ln(n)n^{-1/(8\alpha_l+4)}$.\\
Moreover, there exists a constant $C>0$ such that $R_{m_o}\leq \inf_{m\in\M_n}\left(c_und_{m}^{-\alpha_u}+d_m\right)\leq Cn^{-1/(2\alpha_u+1)}$, thus  $R_{m_o}/D_m^*\leq Cn^{1/(2\alpha_u+1)-1}=Cn^{-2\alpha_u/(2\alpha_u+1)}$. Since there exists $C>0$ such that $n\|s-s_{m^*}\|^2/D_{m^*}\leq Cd_{m^*}^{-\alpha_u}=Cn^{-\alpha_u}$, {\bf [BR]} holds with $h^o_n=Cn^{-2\alpha_u/(2\alpha_u+1)}$ and $h^*_n=Cn^{-\alpha_u}$.\\
Other examples can be found in Birg\'e $\&$ Massart \cite{BM97}, see also Section \ref{Ch2S3}.

\subsection{Results on the Slope Heuristic}\label{RSH}
Let us now turn to the slope heuristic presented in Section \ref{ISH}.

\begin{theo}\label{P2} (Minimal penalty)
Let $\M_n$ be a collection of models satisfying {\bf [V]} and {\bf [BR]} and let $\epsilon^*_n=\epsilon_n\vee h_n^*$.\\
Assume that there exists $0<\delta_n<1$ such that $0\leq \pen(m)\leq (1-\delta_n)D_m/n$. Let $\hat{m},\tilde{s}$ be the random variables defined in (\ref{plse}) and let
$$c_n=\frac{\delta_n-28\epsilon^*_n}{1+16\epsilon_n}.$$
There exists a constant $C>0$ such that,
\begin{equation}\label{slope1}
\p\left(D_{\hat{m}}\geq c_nD_{m^*},\;\|s-\tilde{s}\|^2\geq \frac{c_n}{5h_n^o}\inf_{m\in\M_n}\|s-\hat{s}_m\|^2\right)\geq1-Ce^{-\frac12(\ln n)^{\gamma}}.
\end{equation}
\end{theo}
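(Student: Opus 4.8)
The plan is to exploit the decomposition (\ref{or1}) in the "wrong direction": since $\pen(m)$ is too small, the quantity $2p(\hat m)-\pen(\hat m)$ on the right-hand side, evaluated at the \emph{selected} model, will be large, and this will force $\hat m$ to be a model with large complexity $D_{\hat m}$. More precisely, I would start from the definition (\ref{plse}) of $\hat m$, which gives $P_nQ(\hat s_{\hat m})+\pen(\hat m)\le P_nQ(\hat s_{m^*})+\pen(m^*)$ for the model $m^*$ maximizing $D_m$. Rewriting $P_nQ(\hat s_m)=\|s-\hat s_m\|^2-\|s\|^2-2\nu_n(\hat s_m)=\|s-\hat s_m\|^2-\|s\|^2-2p(m)-2\nu_n(s_m)$ via (\ref{etoile}) and the identity $\nu_n(\hat s_m)=p(m)+\nu_n(s_m)$, the inequality becomes, after cancelling $\|s\|^2$,
\[
\|s-\hat s_{\hat m}\|^2-2p(\hat m)-\delta(\hat m,\cdot)/1+\pen(\hat m)\ \le\ \|s-\hat s_{m^*}\|^2-2p(m^*)+\pen(m^*),
\]
i.e. roughly $2p(\hat m)-\pen(\hat m)\ge \|s-\hat s_{\hat m}\|^2-\|s-\hat s_{m^*}\|^2-2p(m^*)+\pen(m^*)+\delta(\hat m,m^*)$. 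Since $\|s-\hat s_{\hat m}\|^2\ge 0$ and $\pen\ge 0$, and using $\pen(\hat m)\le(1-\delta_n)D_{\hat m}/n$, I get a lower bound of the form $2p(\hat m)\ge \delta_n D_{\hat m}/n$ minus error terms coming from $p(m^*)$, $\|s-\hat s_{m^*}\|^2$ and $\delta(\hat m,m^*)$.

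The second step is to control all the random fluctuations uniformly. For the selected model I need an \emph{upper} bound $p(\hat m)\le D_{\hat m}/n+(\text{small})$ valid simultaneously over all $m\in\M_n$; this is exactly what inequality (\ref{minpen1}) of Proposition \ref{concbasic} provides, and Assumption {\bf [V]} guarantees the error term is $\le\epsilon_n R_m/n\lesssim\epsilon_n(\|s-\hat s_m\|^2+D_m/n)$ after summing the union bound over the classes $\M_n^k$ (the weights $l_{n,\gamma}(k,k')$ in (\ref{marge}) are precisely designed so that $\sum_k\mathrm{Card}(\M_n^k)e^{-l_{n,\gamma}/20}$-type sums are controlled by $e^{-\frac12(\ln n)^\gamma}$). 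For $p(m^*)$ I need an \emph{upper} bound too, again from (\ref{minpen1}), giving $p(m^*)\le D_{m^*}/n(1+\epsilon_n)$. The cross term $\delta(\hat m,m^*)=2\nu_n(s_{\hat m}-s_{m^*})$ is a single empirical process evaluated at a fixed function, bounded by Bernstein's inequality uniformly over the pair $(\hat m,m^*)$; Assumption {\bf [V]} (the $v_{m,m'},e_{m,m'}$ part) makes it $\le\epsilon_n(R_{\hat m}+R_{m^*})/n$. Plugging these in and using $R_m/n=\|s-s_m\|^2+D_m/n\le\|s-\hat s_m\|^2+D_m/n$ in expectation — here one must be a little careful and instead relate $R_m/n$ to $\|s-\hat s_m\|^2+p(m)$ via another application of Proposition \ref{concbasic}, since the theorem's conclusion is trajectorial — I obtain $D_{\hat m}/n\gtrsim(\delta_n-c\epsilon_n^*)D_{m^*}/n-c\epsilon_n D_{\hat m}/n$, which rearranges to $D_{\hat m}\ge c_n D_{m^*}$ with $c_n=(\delta_n-28\epsilon_n^*)/(1+16\epsilon_n)$ once the numerical constants are tracked.

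For the second half of (\ref{slope1}), the risk lower bound, I would use $\|s-\tilde s\|^2\ge\|s_{\hat m}-\hat s_{\hat m}\|^2=p(\hat m)$ (Pythagoras), then $p(\hat m)\ge D_{\hat m}/n-(\text{small})\ge c_n D_{m^*}/n(1-\epsilon_n)$ by (\ref{minpen2}) of Proposition \ref{concbasic}, while on the other side the oracle risk satisfies $\inf_{m}\|s-\hat s_m\|^2\le\|s-\hat s_{m_o}\|^2$ with $\E\|s-\hat s_{m_o}\|^2=R_{m_o}/n$, and a concentration bound (again Proposition \ref{concbasic} applied to $m_o$, plus a Bernstein bound on $\nu_n(s_{m_o})$, using {\bf [V]}) gives $\inf_m\|s-\hat s_m\|^2\le C R_{m_o}/n$ on the good event. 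Dividing, and invoking {\bf [BR]}'s bound $R_{m_o}/D_{m^*}\le h_n^o$, yields $\|s-\tilde s\|^2\ge (c_n/(5h_n^o))\inf_m\|s-\hat s_m\|^2$ (the factor $5$ absorbing the constants from the two concentration steps). Finally I would collect all the exceptional probabilities — from the union bounds over $\M_n^k\times\M_n^{k'}$ in the several applications of Proposition \ref{concbasic} and of Bernstein — and check they sum to $Ce^{-\frac12(\ln n)^\gamma}$, which works because $\gamma>1$ makes $(\ln n)^\gamma$ dominate the log-cardinality terms in $l_{n,\gamma}$.

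The main obstacle I anticipate is the bookkeeping around replacing deterministic quantities like $R_m/n$ by trajectorial ones ($\|s-\hat s_m\|^2$, $p(m)$) \emph{uniformly} over the whole collection: the same good event must simultaneously control $p(m)$ from above and below for $\hat m$, $m^*$ and $m_o$, control $R_m$ versus its empirical proxies, and control the cross terms $\delta(\cdot,\cdot)$ — all while keeping the accumulated error constants small enough that they fit inside the stated $28$ and $16$. Getting the union-bound weights $l_{n,\gamma}$ to close the argument (so that the "worst case over $k$" in {\bf [V]} really does dominate every sum that appears) is where the hypotheses are used most delicately, and is the step I would expect to require the most care.
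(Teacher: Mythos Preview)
Your proposal is correct and follows essentially the same route as the paper. The paper packages the uniform concentration of $p(m)$ around $D_m/n$ and the Bernstein control of $\delta(m,m')$ into a single ``good event'' $\Omega_T$ (their Lemma~5.2), then works with the criterion $\mathrm{Crit}(m)=\|s-s_m\|^2-p(m)+\delta(m_o,m)+\pen(m)$, bounding it above and below to show that any $m$ with $D_m<c_nD_{m^*}$ satisfies $\mathrm{Crit}(m)>\mathrm{Crit}(m^*)$; this is exactly your comparison of $\hat m$ against $m^*$, just organised slightly differently. One small point: do \emph{not} discard $\|s-\hat s_{\hat m}\|^2$ as you suggest---you need the bias part $\|s-s_{\hat m}\|^2$ to absorb the $\epsilon_n R_{\hat m}/n$ error terms coming from the concentration of $p(\hat m)$ and from $\delta(\hat m,m^*)$ (this is how the paper gets the clean lower bound $\mathrm{Crit}(m)\ge -(1+16\epsilon_n)D_m/n$); your later line ``$D_{\hat m}/n\gtrsim(\delta_n-c\epsilon_n^*)D_{m^*}/n-c\epsilon_n D_{\hat m}/n$'' is the right target and already implicitly keeps it.
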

{\bf Comments:} Assume that $\pen(m)\leq (1-\delta)D_m/n$, then, inequality (\ref{slope1}) proves that an oracle inequality can not be obtained since $c_n/h_n^o\rightarrow \infty$. Moreover, $D_{\hat{m}}\geq c D_{m^*}$ is as large as possible. This proves point 1 of the slope heuristic.

\begin{theo}\label{P3}
Let $\M_n$ be a collection of models satisfying Assumption {\bf [V]}. Assume that there exist $\delta^+\geq \delta_->-1$ and $0\leq p'<1$ such that, with probability at least $1-p'$,
$$2\frac{D_m}n+\delta_-\frac{R_m}n\leq \pen(m)\leq2\frac{D_m}n+\delta^+\frac{R_m}n.$$
Let $\hat{m},\tilde{s}$ be the random variables defined in (\ref{plse}) and let
$$C_n(\delta_-,\delta^+)=\left(\frac{1+\delta_--46\epsilon_n}{1+\delta^++26\epsilon_n}\vee 0\right)^{-1}.$$ 
There exists a constant $C>0$ such that, with probability larger than $1-p'-Ce^{-\frac12(\ln n)^{\gamma}}$,
\begin{equation}\label{OISH}
D_{\hat{m}}\leq C_n(\delta_-,\delta^+)R_{m_o}
,\;\|s-\tilde{s}\|^2\leq C_n(\delta_-,\delta^+)\inf_{m\in\M_n}\|s-\hat{s}_m\|^2.
\end{equation}
\end{theo}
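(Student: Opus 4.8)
The plan is to start from the deterministic risk decomposition~(\ref{or1}) and control each fluctuation term uniformly over $\M_n$ on a single good event. Concretely, I would introduce the event $\Omega_n$ on which three families of inequalities hold simultaneously: first, the upper concentration of $p(m)$ around $D_m/n$ from Proposition~\ref{concbasic} applied with $x$ of order $l_{n,\gamma}(k,k)$ on each slice $\M_n^k$, which combined with Assumption~\textbf{[V]} gives $|p(m)-D_m/n|\leq \epsilon_n R_m/n$ for all $m$; second, the analogous lower bound; third, a uniform control $|\delta(m,m')|=|2\nu_n(s_m-s_{m'})|\leq \epsilon_n (R_m\vee R_{m'})/n$ for all pairs, again obtained by a Bernstein/Talagrand bound on the linear process $\nu_n(s_m-s_{m'})$ using $v_{m,m'}^2$ and $e_{m,m'}$ together with the log-cardinality margin in $l_{n,\gamma}$. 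A union bound over the slices $\M_n^k\times\M_n^{k'}$, absorbed by the $(\ln n)^\gamma$ term in $l_{n,\gamma}$ and by the factor $\ln((k+1)(k'+1))$ which makes the tail summable in $k,k'$, yields $\p(\Omega_n)\geq 1-Ce^{-\frac12(\ln n)^\gamma}$.

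On $\Omega_n$, I would plug these bounds into~(\ref{or1}). Using the hypothesis $2D_m/n+\delta_- R_m/n\le \pen(m)\le 2D_m/n+\delta^+ R_m/n$ (on the event of probability $\ge 1-p'$), the term $\pen(m)-2p(m)$ is at most $(\delta^++2\epsilon_n)R_m/n$, and the term $2p(\hat m)-\pen(\hat m)$ is at most $(-\delta_-+2\epsilon_n)R_{\hat m}/n$ up to a few more $\epsilon_n$ factors from replacing $p$ by $D/n$; similarly $\delta(\hat m,m)\le \epsilon_n(R_{\hat m}\vee R_m)/n$. Since $\|s-\hat s_{\hat m}\|^2\ge p(\hat m)\ge (1-\epsilon_n)D_{\hat m}/n$ and, more importantly, $\|s-\hat s_{\hat m}\|^2$ is comparable to $R_{\hat m}/n$ on $\Omega_n$ (both from the lower deviation bound on $p(\hat m)$ and from $\|s-s_{\hat m}\|^2 = R_{\hat m}/n - D_{\hat m}/n$), the negative term $2D_{\hat m}/n-\pen(\hat m)\le -(\delta_-+2\epsilon_n)D_{\hat m}/n$ can be used to absorb the positive $R_{\hat m}$-terms: rearranging gives
$$\left(1+\delta_--c\epsilon_n\right)\frac{R_{\hat m}}{n}\le \left(1+\delta^++c\epsilon_n\right)\left(\|s-\hat s_m\|^2+\frac{R_m}{n}\right)$$
for all $m$, roughly speaking, which after taking the infimum over $m$ (and noting $\|s-\hat s_m\|^2+R_m/n$ is itself comparable to $\inf_m\|s-\hat s_m\|^2$ on $\Omega_n$, with $\inf_m R_m = R_{m_o}$) produces both $D_{\hat m}\le C_n(\delta_-,\delta^+)R_{m_o}$ and $\|s-\tilde s\|^2\le C_n(\delta_-,\delta^+)\inf_m\|s-\hat s_m\|^2$ with $C_n(\delta_-,\delta^+)=\left(\frac{1+\delta_--46\epsilon_n}{1+\delta^++26\epsilon_n}\vee 0\right)^{-1}$ after bookkeeping the exact numerical constants coming from Proposition~\ref{concbasic}.

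The main obstacle is the bookkeeping needed to get the sharp leading constant: one must carefully track how each substitution $p(m)\leftrightarrow D_m/n$, each application of the two-sided concentration, and each comparison $\|s-\hat s_m\|^2\leftrightarrow R_m/n$ contributes its own multiple of $\epsilon_n$, and how the $R_{\hat m}$-terms and the $\delta(\hat m,m)$-term get absorbed by the genuinely negative contribution $-\delta_- D_{\hat m}/n$ (in particular when $\delta_-<0$ one must be sure the coefficient $1+\delta_--46\epsilon_n$ stays what controls the bound, which is why the $\vee\,0$ and the reciprocal appear). A secondary delicate point is that the slicing argument requires the deviation parameter $x$ to depend on the slice index $k$ through $l_{n,\gamma}(k,k')$, and one has to check that Assumption~\textbf{[V]} is exactly strong enough to convert the resulting $D_m^{3/4}(e_m x^2)^{1/4}$, $\sqrt{D_m v_m^2 x}$, $v_m^2 x$ and $e_m x^2$ terms in~(\ref{minpen1})--(\ref{minpen2}) into a common $\epsilon_n R_m/n$ bound uniformly — the fourth-power structure of \textbf{[V]} is tailored precisely to dominate the worst of these four terms. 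I do not expect to need Assumption~\textbf{[BR]} here (it is what drives Theorem~\ref{P2} and the optimality statements elsewhere), only~\textbf{[V]}, consistently with the hypotheses as stated.
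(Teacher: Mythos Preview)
Your proposal is correct and follows essentially the same route as the paper: introduce the good event on which $|p(m)-D_m/n|$ and $\delta(m,m')$ are uniformly controlled by multiples of $\epsilon_n R_m/n$ (this is exactly the paper's Lemma~\ref{events}, proved via Proposition~\ref{concbasic} and a Bernstein bound, with the slicing and $l_{n,\gamma}$ union bound you describe), intersect with the penalty event, and plug into~(\ref{decRis}) to rearrange.

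The only noticeable difference in execution is for the bound $D_{\hat m}\le C_n(\delta_-,\delta^+)R_{m_o}$. The paper does \emph{not} derive this from~(\ref{decRis}); it instead analyzes the penalized criterion directly, writing $\mathrm{Crit}(m)=\|s-s_m\|^2-p(m)+\delta(m_o,m)+\pen(m)$, bounding it below by $(1+\delta_--16\epsilon_n)D_m/n$ and above at $m_o$ by $(1+\delta^++26\epsilon_n)R_{m_o}/n$, and concluding that any $m$ with $D_m>C_n R_{m_o}$ satisfies $\mathrm{Crit}(m)>\mathrm{Crit}(m_o)$ and hence cannot be $\hat m$. This yields the exact constant $C_n$ with $26\epsilon_n$ in the denominator. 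Your route---first proving the oracle inequality via~(\ref{decRis}) and then reading off $D_{\hat m}$ through $D_{\hat m}\le R_{\hat m}\le (1-20\epsilon_n)^{-1}n\|s-\tilde s\|^2$---works but picks up an extra factor $(1+10\epsilon_n)/(1-20\epsilon_n)$ in front of $C_n$, so you would not recover the \emph{stated} constant without the criterion comparison. This is a bookkeeping point, not a gap in the argument.
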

{\bf Comments :} 
\begin{itemize}
\item Assume that $\pen(m)=KD_m/n$ with $K>1$, then inequality (\ref{OISH}) ensures that $D_{\hat{m}}\leq C_n(K,K)R_{m_o}$. Hence, $D_{\hat{m}}$ jumps from $D_{m^*}$ (Theorem \ref{P2}) to $R_{m_o}$ (\ref{OISH}) when $\pen(m)$ is around $D_m/n$, which is much smaller thanks to Assumption {\bf[BR]}. This proves point 2 of the slope heuristic. 
\item Point 3 of this heuristic comes from inequality (\ref{OISH}) applied with small $\delta_-$ and $\delta^+$. The rate of convergence of the leading constant to $1$ is then given by the supremum between $\delta_-$, $\delta^+$ and $\epsilon_n$. 
\item The condition on the penalty has the same form as the one given in Arlot $\&$ Massart \cite{AM08}. It comes from the fact that we do not know $D_m/n$ in many cases, therefore, it has to be estimated. We propose two alternatives to solve this issue. In Section \ref{Rp}, we give a resampling estimator of $D_m$. It can be used for all collection of models satisfying {\bf [V]} and its error of approximation is upper bounded by $\epsilon_n R_m/n$. Thus Theorem \ref{P3} holds with $(\delta_-\vee \delta^+)\leq C\epsilon_n$. In Section \ref{THC}, we will also see that, in regular models, we can use $d_m$ instead of $D_m$ and the error is upper bounded by $C R_m/R_{m_o}$, thus Theorem \ref{P3} holds with $(\delta_-\vee \delta^+)\leq C/R_{m_o}<<\epsilon_n$, $p'=0$. In both cases, we deduce from Theorem \ref{P3} that the estimator $\tilde{s}$ given by the slope algorithm achieves an optimal oracle inequality $OTO(\kappa\epsilon_n, Ce^{-\frac12(\ln n)^{\gamma}})$. In Example {\bf HR}, for example, we obtain $\epsilon_n=Cn^{-1/(8\alpha_l+4)}\ln n$.
\end{itemize}

\subsection{Resampling penalties}\label{Rp}
Optimal model selection is possible in density estimation provided that we have a sharp estimation of $D_m=n\E\left(\sup_{t\in B_m}(\nu_n(t))^2\right)$. We propose an estimator of this quantity based on the resampling heuristic. The model selection algorithm that we deduce is the same as the resampling penalization procedure introduced by Arlot \cite{Ar08}. Let $F$ be a fixed functional. Efron's heuristic states that the law $\mathcal{L}(F(\nu_n))$ is close to the conditional law $\mathcal{L}^W(C_WF(\nu_n^W))$, where $C_W$ is a normalizing constant depending only on the resampling scheme and the functional $F$. Let $P_n^W=\sum_{i=1}^nW_i\delta_{X_i}/n$ and $\nu_n^W=P_n^W-\bar{W}_nP_n$. The resampling estimator of $D_m$ is $D_m^W=nC_W^2\E^W\left(\sup_{t\in B_m}(\nu_n^W(t))^2\right)$ and the resampling penalty associated is $\pen(m)=2D_m^W/n$. Actually, the following result describes the concentration of $D_m^W$ around its mean $D_m$ and around $np(m)$.
\begin{prop}\label{cboot}
Let $(W_1,...,W_n)$ be a resampling scheme, let $S_m$ be a linear space, $B_m=\left\{t\in S_m,\; \|t\|\leq 1\right\}$, $p(m)=\sup_{t\in B_m}(\nu_n(t))^2$, $D_m=n\E\left(p(m)\right)$ and let $D_m^W$ be the resampling estimator of $D_m$ based on $(W_1,...,W_n)$, that is $D_m^W=nC_W^2\E^W\left(\sup_{t\in B_m}(\nu_n^W(t))^2\right)$, where $v_W^2=\Var(W_1-\bar{W}_n)$ and $C_W^2=(v_W^2)^{-1}$.\\
Then, for all $m$ in $\M_n$, $\E(D_m^W)=D_m$. Moreover, let $e_m$, $v_m$ be the quantities defined in (\ref{defbv}). For all $x>0$, on an event of probability larger than $1-7.8e^{-x}$,
\begin{eqnarray}
D_m^W-D_m&\leq&\sqrt{8e_mD_mx}+e_m\left(\frac{4x}3+\frac{ (40.3x)^2}{n-1}\right)\nonumber\\
&&+\frac{9D_m^{3/4}(e_m x^2)^{1/4}+7.61\sqrt{v_m^2D_mx}}{n-1}.\label{concboot1}\\
D_m^W-D_m&\geq&-\sqrt{8e_mD_mx}-e_m\left(\frac{4x}3+\frac{(19.1x)^2}{n-1}\right)\nonumber\\
&&-\frac{5.31D_m^{3/4}(e_m x^2)^{1/4}+3\sqrt{v_m^2D_mx}+3v_m^2x}{n-1}.\label{concboot2}
\end{eqnarray}
For all $x>0$,
\begin{equation}\label{concboot3}
\p\left( p(m)-\frac{D_m^W}n>\frac{5.31D_m^{3/4}(e_m x^2)^{1/4}+3\sqrt{v_m^2D_mx}+3v_m^2x+e_m (19.1x)^2}{n-1}\right)\leq2e^{-x}
\end{equation}
\begin{equation}\label{concboot4}
\p\left( \frac{D_m^W}n-p(m)\leq\frac{9D_m^{3/4}(e_m x^2)^{1/4}+7.61\sqrt{v_m^2D_mx}+e_m (40.3x)^2}{n-1}\right)\leq3.8e^{-x}.
\end{equation}
\end{prop}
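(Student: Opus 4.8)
{of Proposition \ref{cboot}}

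The whole plan is to reduce the statement to the concentration of one explicit degenerate U-statistic. I would first compute the conditional expectation $\E^W$ in closed form. Writing $\sup_{t\in B_m}(\nu_n^W(t))^2=\sum_{\lambda\in m}(\nu_n^W(\psi_\lambda))^2$ as in Section~\ref{Cip} and setting $R_i=W_i-\bar W_n$, exchangeability together with $\sum_{i=1}^nR_i=0$ forces $\E^W(R_i^2)=v_W^2$ and $\E^W(R_iR_j)=-v_W^2/(n-1)$ for $i\neq j$; substituting and using $C_W^2v_W^2=1$ gives the basis-free identity
$$D_m^W=\frac{n}{n-1}\sum_{\lambda\in m}\left(P_n(\psi_\lambda^2)-(P_n\psi_\lambda)^2\right)=\frac1n\sum_{i=1}^nK_m(X_i,X_i)-\frac1{n(n-1)}\sum_{i\neq j}K_m(X_i,X_j),$$
where $K_m(x,y)=\sum_{\lambda\in m}\psi_\lambda(x)\psi_\lambda(y)$ is the reproducing kernel of $S_m$. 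A one-line moment computation, using $\E((P_n\psi_\lambda)^2)=(P\psi_\lambda)^2+\Var(\psi_\lambda(X))/n$, then gives $\E(D_m^W)=\sum_{\lambda\in m}\Var(\psi_\lambda(X))=D_m$.

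The key algebraic step is the exact identity
$$p(m)-\frac{D_m^W}{n}=U_n:=\frac1{n(n-1)}\sum_{i\neq j}\widetilde K_m(X_i,X_j),\qquad \widetilde K_m(x,y)=\sum_{\lambda\in m}(\psi_\lambda(x)-P\psi_\lambda)(\psi_\lambda(y)-P\psi_\lambda),$$
obtained by expanding $p(m)=\sum_{\lambda\in m}(P_n\psi_\lambda-P\psi_\lambda)^2$, comparing with the formula above for $D_m^W$, and using $\sum_{\lambda\in m}(P\psi_\lambda)\psi_\lambda=s_m$: the diagonal terms $n^{-2}\sum_iK_m(X_i,X_i)$ cancel. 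The kernel $\widetilde K_m$ is canonical, since $\E(K_m(x,X_2))=s_m(x)$ and $\E(s_m(X_2))=\|s_m\|^2$ yield $\E(\widetilde K_m(x,X_2))=0$. Together with $\E(p(m))=D_m/n$ this also gives the dual decomposition $D_m^W-D_m=\nu_n(\chi_m)-U_n$, where $\chi_m(x)=\widetilde K_m(x,x)-\|s_m\|^2=K_m(x,x)-2s_m(x)$ is a fixed function.

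With these two decompositions the proof splits. Inequalities \eqref{concboot3}--\eqref{concboot4} are exactly the two upper tails of the degenerate U-statistics $U_n$ and $-U_n$; I would apply the concentration inequality for U-statistics of order~$2$ proved in the Appendix, after bounding its kernel functionals using only $e_m,v_m,D_m$. Writing $\Sigma$ for the covariance matrix of $(\psi_\lambda(X))_{\lambda\in m}$, one has $\textrm{tr}\,\Sigma=D_m$ and largest eigenvalue $v_m^2$, so $\E(\widetilde K_m(X_1,X_2)^2)=\textrm{tr}(\Sigma^2)\le v_m^2D_m$ (this produces the $\sqrt{v_m^2D_mx}$ terms), the integral operator on $L^2(P)$ with kernel $\widetilde K_m$ has operator norm $v_m^2$ (the $v_m^2x$ terms), and $\sup_{x,y}|\widetilde K_m(x,y)|\le\sup_x\widetilde K_m(x,x)=\sup_{t\in B_m}\|t-Pt\|_\infty^2$, controlled through the reproducing property $\sup_x\sum_{\lambda\in m}\psi_\lambda(x)^2=\sup_{t\in B_m}\|t\|_\infty^2=ne_m$ (the $e_mx^2/(n-1)$ terms, together with the cross term $D_m^{3/4}(e_mx^2)^{1/4}/(n-1)$, which is of the same nature as the one in Proposition~\ref{concbasic} since the Appendix inequality is itself deduced from a concentration bound for $Z^2=p(m)$). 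For \eqref{concboot1}--\eqref{concboot2} I would instead use $D_m^W-D_m=\nu_n(\chi_m)-U_n$: Bernstein's inequality applied to the fixed bounded function $\chi_m$, with $\|\chi_m\|_\infty\lesssim ne_m$ and $\Var(\chi_m(X))\le\E(\chi_m(X)^2)\lesssim ne_mD_m$, produces the $\sqrt{8e_mD_mx}$ and $e_m\cdot4x/3$ terms, and is combined via a union bound with the bound on $U_n$ just obtained; the probability levels $7.8e^{-x}$ and $3.8e^{-x}$ come from adding the several one-sided inequalities.

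The probabilistic input is thus essentially off-the-shelf; the real work is the bookkeeping. Reproducing the exact numerical constants requires an optimised $x$-dependent splitting inside the Appendix U-statistic inequality and, above all, careful control of the sup-norm of the \emph{centered} kernel $\widetilde K_m$ (equivalently of $\chi_m$): one must verify that the correction carried by the projection $s_m$ is genuinely of lower order — or can be absorbed using $\textrm{tr}\,\Sigma\ge v_m^2$ together with Cauchy--Schwarz bounds such as $|\nu_n(s_m)|\le\|s_m\|\sqrt{p(m)}$ and $\|s_m\|\le\|s\|$ — so that every term in \eqref{concboot1}--\eqref{concboot4} can be written with $e_m,v_m,D_m,n$ alone. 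I expect this sup-norm estimate to be the main technical obstacle.
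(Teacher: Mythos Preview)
Your proposal is correct and follows essentially the same route as the paper: the paper's Lemma~\ref{TL3} gives exactly your two identities $p(m)-D_m^W/n=U_n$ and $D_m^W-D_m=\nu_n(T_m)-U_n$ (with $T_m(x)=\widetilde K_m(x,x)$, your $\chi_m$ up to an additive constant), and the paper then applies its Lemma~\ref{TL1} to $U_n$ and Bernstein to $\nu_n(T_m)$, combining by union bound. The one place where you overcomplicate matters is the ``main technical obstacle'' you anticipate: in the paper the sup-norm of the centred diagonal is handled in one line via $|t(x)-Pt|\le 2\|t\|_\infty$, giving $\|T_m\|_\infty\le 4\sup_{t\in B_m}\|t\|_\infty^2=4ne_m$ and $\Var(T_m(X))\le \|T_m\|_\infty PT_m\le 4ne_mD_m$, with no need to isolate or absorb any $s_m$-contribution.
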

{\bf Remark }\\
The concentration of the resampling estimator involves the same quantities as the concentration of $p(m)$, thus, it can be used to estimate the ideal penalty in the slope heuristic's algorithm presented in the previous section without extra assumptions on the collection $\M_n$. Proposition \ref{cboot} and Theorem \ref{P3} prove that this resampling penalty leads to an efficient model selection procedure. However, we do not need to use the slope heuristic in our framework to obtain an optimal model selection procedure as shown by the following theorem.

\begin{theo}\label{OracleB1}
Let $X_1,...,X_n$ be i.i.d random variables with common density $s$. Let $\M_n$ be a collection of models satisfying Assumption {\bf [V]}. Let $W_1,...,W_n$ be a resampling scheme, let $\bar{W}_n=\sum_{i=1}^nW_i/n$, $v_W^2=\Var(W_1-\bar{W}_n)$ and $C_W=2(v_W^2)^{-1}$. Let $\tilde{s}$ be the penalized least-squares estimator defined in (\ref{plse}) with 
$$\pen(m)=C_W\E^W\left(\sup_{t\in B_m}(\nu_n^W(t))^2\right).$$
Then, there exists a constant $C>0$ such that 
\begin{equation}\label{Or1}
\p\left(\|s-\tilde{s}\|^2\leq(1+100\epsilon_n)\inf_{m\in\M_n}\|s-\hat{s}_m\|^2\right)\geq 1-C e^{-\frac12(\ln n)^{\gamma}}.
\end{equation}
\end{theo}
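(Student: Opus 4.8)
The plan is to reduce the claim to the two conditions (\ref{Main}) and (\ref{Couple}) isolated in the introduction, with $\epsilon_n$ replaced by a fixed constant multiple $\kappa\epsilon_n$: by the observation made just after (\ref{fcc}), if both hold on an event of probability $\ge 1-Ce^{-\frac12(\ln n)^{\gamma}}$, then $\tilde s$ satisfies $OTO(f_{2,2}(\kappa\epsilon_n),Ce^{-\frac12(\ln n)^{\gamma}})$, and since $f_{2,2}(\kappa\epsilon_n)=\frac{4\kappa\epsilon_n}{1-2\kappa\epsilon_n}$ tends to $0$ at the rate $\epsilon_n$, it is $\le 100\epsilon_n$ for $n$ large, while for the remaining (finitely many) $n$ the probability bound is made vacuous by enlarging $C$; this gives (\ref{Or1}). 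A preliminary remark: with the normalisation $C_W=2(v_W^2)^{-1}$ used in the statement (twice the square of the $C_W$ of Proposition \ref{cboot}), the penalty is exactly $\pen(m)=2D_m^W/n$, so $\pen(m)-2p(m)=2\big(D_m^W/n-p(m)\big)$, which is precisely what Proposition \ref{cboot} controls.

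First I would show that, uniformly over $\M_n$, the random risk $\|s-\hat s_m\|^2$ is equivalent to $R_m/n$ and that $\pen(m)$ is close to $2p(m)$. Applying Proposition \ref{concbasic} to each $S_m$ with deviation level a suitable multiple of $l_{n,\gamma}(R_m,R_m)$ and summing the bounds (\ref{minpen1})--(\ref{minpen2}) over $m\in\M_n$ --- the presence of $\ln(1+\mathrm{Card}(\M_n^{[R_m]}))$, $\ln((R_m+1)^2)$ and $(\ln n)^{\gamma}$ inside $l_{n,\gamma}$ makes this union bound cost at most $Ce^{-(\ln n)^{\gamma}}$ --- and then invoking Assumption {\bf [V]} with $m'=m$ (so $v_{m,m}^2=v_m^2$, $e_{m,m}=e_m$) to bound each term on the right by a constant times $\epsilon_n R_m/n$ (the two leading terms being genuinely $O(\epsilon_n)$, the others $O(\epsilon_n^2)$ and $O(\epsilon_n^4)$), I get $|p(m)-D_m/n|\le\kappa_1\epsilon_n R_m/n$ for all $m$; since $\|s-\hat s_m\|^2=\|s-s_m\|^2+p(m)$ and $R_m/n=\|s-s_m\|^2+D_m/n$, this yields $(1-\kappa_1\epsilon_n)R_m/n\le\|s-\hat s_m\|^2\le(1+\kappa_1\epsilon_n)R_m/n$. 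The same computation applied to (\ref{concboot3})--(\ref{concboot4}) of Proposition \ref{cboot} gives $|D_m^W/n-p(m)|\le\kappa_2\epsilon_n R_m/n$ uniformly, whence $|2p(m)-\pen(m)|=2|D_m^W/n-p(m)|\le\frac{2\kappa_2}{1-\kappa_1\epsilon_n}\epsilon_n\|s-\hat s_m\|^2$, which is (\ref{Main}) with constant $\kappa_a\epsilon_n$.

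It remains to verify (\ref{Couple}), i.e.\ to control $\delta(m,m')=2\nu_n(s_m-s_{m'})$. For $s_m\neq s_{m'}$ I would write $\delta(m,m')=2\|s_m-s_{m'}\|\,\nu_n(u_{m,m'})$ with $u_{m,m'}=(s_m-s_{m'})/\|s_m-s_{m'}\|$, a \emph{deterministic} unit vector of $S_m+S_{m'}$ satisfying $\Var(u_{m,m'}(X))\le v_{m,m'}^2$ and $\|u_{m,m'}\|_{\infty}\le\sqrt{n\,e_{m,m'}}$. Bernstein's inequality for the single variable $\nu_n(u_{m,m'})$, a union bound over the pairs $(m,m')$ with deviation level a multiple of $l_{n,\gamma}(R_m,R_{m'})$, and Assumption {\bf [V]} again, give $|\nu_n(u_{m,m'})|\le\kappa_3\epsilon_n\sqrt{(R_m\vee R_{m'})/n}$ simultaneously for all pairs on an event of probability $\ge 1-Ce^{-\frac12(\ln n)^{\gamma}}$. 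Since $\|s_m-s_{m'}\|\le\|s-s_m\|+\|s-s_{m'}\|\le\sqrt{R_m/n}+\sqrt{R_{m'}/n}$, this bounds $|\delta(m,m')|$ by a constant times $\epsilon_n(R_m+R_{m'})/n$, hence --- using the equivalence established above once more --- by $\kappa_b\epsilon_n(\|s-\hat s_m\|^2+\|s-\hat s_{m'}\|^2)$, which is (\ref{Couple}). Intersecting the three events constructed so far and setting $\kappa=\max(\kappa_a,\kappa_b)$ completes the argument along the lines of the first paragraph.

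The step I expect to be the main obstacle is the coupling term $\delta(m,m')$. The obvious estimate $\nu_n(s_m-s_{m'})\le\|s_m-s_{m'}\|\sup\{\nu_n(t):t\in S_m+S_{m'},\,\|t\|\le 1\}$, in which the empirical process is replaced by its supremum over the unit ball of $S_m+S_{m'}$, costs a factor of order $\epsilon_n^{-1}$ and is unusable; the point is to exploit that $u_{m,m'}$ is a fixed direction, so that a one-dimensional Bernstein bound calibrated against exactly the quantities $v_{m,m'}^2$ and $e_{m,m'}$ appearing in Assumption {\bf [V]} is available. The remainder is a careful but routine bookkeeping of the union bounds (so that the final probability is $Ce^{-\frac12(\ln n)^{\gamma}}$) and of the powers of $\epsilon_n$ that {\bf [V]} produces.
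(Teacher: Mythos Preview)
Your proposal is correct and follows essentially the same route as the paper. The paper packages your three uniform controls into two preparatory lemmas (Lemma~\ref{events} for $|p(m)-D_m/n|$ and for $\delta(m,m')$, Lemma~\ref{eventsB} for $|D_m^W/n-p(m)|$), then plugs them directly into the decomposition (\ref{decRis}) rather than via the abstract criteria (\ref{Main})--(\ref{Couple}); your treatment of $\delta(m,m')$ via Bernstein on the single deterministic direction $u_{m,m'}$ is exactly the content of the paper's Lemma~\ref{TL2}. One small bookkeeping remark: the probability loss for the $p(m)$ concentration is $Ce^{-\frac12(\ln n)^{\gamma}}$ rather than $Ce^{-(\ln n)^{\gamma}}$, because Proposition~\ref{concbasic} has the factor $e^{-x/20}$ in its tail bound; this does not affect your overall conclusion since you already state the final probability with the factor $\tfrac12$.
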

{\bf Comments :} The main advantage of this results is that the penalty term is always totally computable. Unlike the penalties derived from the slope heuristic, it does not depend on an arbitrary choice of a constant $K_{\min}$ made by the observer, that may be hard to detect in practice (see the paper of Alot $\&$ Massart \cite{AM08} for an extensive discussion on this important issue). However, $C_W$ is only optimal asymptotically. It is sometimes useful to overpenalize a little in order to improve the non-asymptotic performances of our procedures (see Massart \cite{Ma07}) and the slope heuristic can be used to do it in an optimal way (see our short simulation study in Section \ref{Ch2S4}).

\subsection{A remarks on the "regularization phenomenon"}
The regularization of the bootstrap phenomenon (see Arlot  \cite{Ar07, Ar08} and the references therein) states that the resampling estimator $C_W\E^W(F(\nu_n^W))$ of a functional $F(\nu_n)$ concentrates around its mean better than $F(\nu_n)$. This phenomenon can be justified with our previous results for our functional $F$. Recall that we have proven in Proposition \ref{concbasic} that, for all $x>0$, with probability larger than $1-3.8e^{-x/20}$,
\begin{equation*}
\left|p(m)-\frac{D_m}n\right|\leq\frac{1.8D_m^{3/4}(e_mx^2)^{1/4}+1.5\sqrt{D_mv_m^2x}+0.2v_m^2x+4.1e_m x^2}n.
\end{equation*}
In Example {\bf HR}, we have the following upper bounds
$$D_m\leq d_m,\;e_m\leq \frac{d_m}n,\; v_m^2\leq c\|s\|\sqrt{d_m}.$$
Thus, there exists a constant $C$ such that, for all $x>0$,
\begin{equation}\label{empi}
\p\left(\left|n p(m)-D_m\right|>Cd_m\left(\sqrt{\frac x{\sqrt{n}}}+\left(\frac{x}{\sqrt{n}}\right)^2\right)\right)\leq 3.8e^{-x/20}.
\end{equation}
On the other hand, it comes from Inequalities (\ref{concboot1}) and (\ref{concboot2}), that, for all $x>0$, on an event of probability larger than $1-7.8e^{-x/20}$,
\begin{eqnarray*}
\left|D_m^W-D_m\right|&\leq&\sqrt{0.4e_mD_mx}+e_m\left(\frac{x}{15}+\frac{4.1 x^2}{n-1}\right)\\
&&+\frac{1.8D_m^{3/4}(e_m x^2)^{1/4}+1.45\sqrt{v_m^2D_mx}+0.2v_m^2x}{n-1}.
\end{eqnarray*}
Thus, there exists a constant $C$ such that, for all $x>0$,
\begin{equation*}
\p\left(\left|D_m^W-D_m\right|> Cd_m\left(\sqrt{\frac xn}+\left(\frac xn\right)^2\right)\right)\leq 7.8e^{-x/20}.
\end{equation*}
The concentration of $D_m^W$ is then much better than the one of $np(m)$. This implies that $D_m^W$ is an estimator of $D_m$ rather than an estimator of $n p(m)$. Thus, the resampling penalty can be used when $D_m/n$ is a good penalty for example, under {\bf[V]}. When $D_m/n$ is known to underpenalize (see the examples in Barron, Birg\'e $\&$ Massart \cite{BBM99}), there is no chance that $D_m^W/n$ can work.

\section{Rates of convergence for classical examples}\label{Ch2S3}

The aim of this section is to show that {\bf [V]} can be derived from a more classical hypothesis in two classical collections of models: the histograms and Fourier spaces. We derive the rates $\epsilon_n$ under this new hypothesis.

\subsection{Assumption on the risk of the oracle}
As mentioned in Section \ref{MainAss}, Assumption {\bf [V]} can only hold if there exists $\gamma>1$ such that $R_n(\ln n)^{-\gamma}\rightarrow \infty$ as $n\rightarrow \infty$, where $R_n=\inf_{m\in\M_n}R_m$. In our example, we will make the following Assumption that ensures that this condition is always satisfied.
\vspace{0.2cm}\\
{\bf[BR]} {\it (Bounds on the Risk) There exist constants $C_u>0$, $\alpha_u>0$, $\gamma>1$, and a sequence $(\theta_n)_{n\in\N}$ with $\theta_n\rightarrow \infty$ as $n\rightarrow \infty$ such that, for all $n$ in $\N^*$, for all $m$ in $\M_n$}
$$\theta_n^2(\ln n)^{2\gamma}\leq R_n\leq R_m\leq C_un^{\alpha_u}.$$
{\bf Comments:} Assumption {\bf[BR]} holds with $\theta_n=Cn^{\alpha}$ for the collection of regular histograms of example {\bf HR}, provided that $s$ is an H\"olderian, non constant and compactly supported function (see for example Arlot \cite{Ar07}). It is also a classical result of minimax theory that there exist functions in Sobolev spaces satisfying this kind of Assumption when $\M_n$ is the collection of Fourier spaces that we will introduce below.
\vspace{0.2 cm}\\
We want to check that these collections satisfy Assumption {\bf [V]}, i.e. that there exists $\gamma>1$ such that 
$$\sup_{(k,k')\in(\N^*)^2}\sup_{(m,m')\in\M^k_n\times \M_n^{k'}}\left\{\left(\left(\frac{v_{m,m'}^2}{R_m\vee R_{m'}}\right)^2\vee \frac{e_{m,m'}}{R_m\vee R_{m'}}\right)l^2_{n,\gamma}(k,k')\right\}\leq\epsilon^4_n.$$
For all $m\in\M_n$, $R_m\leq C_un^{\alpha_u}$, thus for all $k>C_un^{\alpha_u}$, ${\rm Card}(\M_n^k)=0$. In particular, we can assume in the previous supremum that $k\leq C_un^{\alpha_u}$ and $k'\leq C_un^{\alpha_u}$. Hence, there exists a constant $\kappa>0$ such that $\ln[(1+k)(1+k')]\leq \kappa\ln n$. We also add the following assumption that ensures that there exists a constant $\kappa>0$ such that, for all $k\in\N$, $\ln(1+{Card}(\M_n^k))\leq \kappa\ln n$.
\vspace{0.2cm}\\
{\bf [PC]} {\it (Polynomial collection) There exist constants $c_{\M}\geq0$, $\alpha_{\M}\geq 0$, such that, for all $n$ in $\N$,}
$${\rm Card}(\M_n)\leq c_{\M}n^{\alpha_{\M}}.$$
Under Assumptions {\bf[BR]} and {\bf [PC]}, there exists a constant $\kappa>0$ such that, for all $\gamma>1$ and $n\geq 3$,
\begin{eqnarray*}
&&\sup_{(k,k')\in(\N^*)^2}\sup_{(m,m')\in\M^k_n\times \M_n^{k'}}\left\{\left(\left(\frac{v_{m,m'}^2}{R_m\vee R_{m'}}\right)^2\vee \frac{e_{m,m'}}{R_m\vee R_{m'}}\right)l^2_{n,\gamma}(k,k')\right\}\\
&&\leq\sup_{(m,m')\in(\M_n)^2}\left\{\left(\frac{v_{m,m'}^2}{R_m\vee R_{m'}}\right)^2\vee \frac{e_{m,m'}}{R_m\vee R_{m'}}\right\}\kappa(\ln n)^{2\gamma}.
\end{eqnarray*}

\subsection{The histogram case}\label{THC}
Let $(\x,\mathcal{X})$ be a measurable space. Let $(P_m)_{m\in \M_n}$ be a collection of measurable partitions $P_m=(I_{\lambda})_{\lambda\in m}$ of subsets of $\x$ such that, for all $m\in\M_n$, for all $\lambda\in m$, $0<\mu(I_{\lambda})<\infty$. Let $m$ in $\M_n$, the set $S_m$ of histograms associated to $P_m$ is the set of functions which are constant on each $I_{\lambda}$, $\lambda\in m$. $S_m$ is a linear space. Setting, for all $\lambda\in m$, $\psi_{\lambda}=(\sqrt{\mu(I_{\lambda})})^{-1}1_{I_{\lambda}}$, the functions $(\psi_{\lambda})_{\lambda\in m}$ form an orthonormal basis of $S_m$.\\
Let us recall that, for all $m$ in $\M_n$,
\begin{equation}\label{Dhisto}
D_m=\sum_{\lambda\in m}\Var(\psi_{\lambda}(X))=\sum_{\lambda\in m}P(\psi_{\lambda}^2)-(P\psi_{\lambda})^2=\sum_{\lambda\in m}\frac{P(X\in I_{\lambda})}{\mu(I_{\lambda})}-\|s_m\|^2.
\end{equation}
Moreover, from Cauchy-Schwarz inequality, for all $x$ in $\x$, for all $m$, $m'$ in $\M_n$
\begin{equation}\label{ehisto}
\sup_{t\in B_{m,m'}}t^2(x)\leq \sum_{\lambda\in m\cup m'}\psi_{\lambda}^2(x),\;\textrm{thus}\;e_{m,m'}=\frac 1n\sup_{\lambda\in m\cup m'}\frac{1}{\mu(I_{\lambda})}.
\end{equation}
Finally, it is easy to check that, for all $m$ ,$m'$ in $\M_n$
\begin{equation}\label{vhisto}
v_{m,m'}^2= \sup_{\lambda\in m\cup m'}\Var(\psi_{\lambda}(X))=\sup_{\lambda\in m\cup m'}\frac{P(X\in I_{\lambda})(1-P(X\in I_{\lambda}))}{\mu(I_{\lambda})}.
\end{equation}
We will consider two particular types of histograms.\\
{\bf Example 1 [Reg] : $\mu$-regular histograms.}\\ 
{\it For all $m$ in $\M_n$, $P_m$ is a partition of $\x$ and there exist a family $(d_m)_{m\in\M_n}$ bounded by $n$ and two constants $c_{rh}$, $C_{rh}$ such that, for all $m$ in $\M_n$, for all $\lambda\in \M_n$,}
$$\frac{c_{rh}}{d_m}\leq \mu(I_{\lambda})\leq \frac{C_{rh}}{d_m}.$$
The typical example here is the collection described in Example {\bf HR}.\\
\vspace{0.2cm}\\
{\bf Example 2 [Ada]: Adapted histograms.} \\
{\it There exist positive constants $c_r$, $C_{ah}$ such that, for all $m$ in $\M_n$, for all $\lambda\in \M_n$, $\mu(I_{\lambda})\geq c_{r}n^{-1}$ and}
$$\frac{P(X\in I_{\lambda})}{\mu(I_{\lambda})}\leq C_{ah}.$$
{\bf [Ada]} is typically satisfied when $s$ is bounded on $\x$. Remark that the models satisfying {\bf [Ada]} have finite dimension $d_m\leq Cn$ since 
$$1\geq \sum_{\lambda\in m}P(X\in I_{\lambda})\geq C_{ah}\sum_{\lambda\in m}\mu(I_{\lambda})\geq C_{ah}c_rd_m n^{-1}.$$
\vspace{0.2cm}\\
{\bf The example [Reg]}.\\
It comes from equations (\ref{Dhisto}, \ref{ehisto}, \ref{vhisto}) and Assumption {\bf [Reg]} that
$$C^{-1}_{rh}d_m-\|s_m\|^2\leq D_m\leq c^{-1}_{rh}d_m-\|s_m\|^2.$$
$$e_{m,m'}\leq c_{rh}^{-1}\frac{d_m\vee d_{m'}}n,\; v_{m,m'}^2\leq \sup_{t\in B_{m,m'}}\left\|t\right\|_{\infty}\|t\|\|s\|\leq c^{-1/2}_{rh}\|s\|\sqrt{d_m\vee d_{m'}}.$$
Thus 
$$\frac{e_{m,m'}}{R_m\vee R_{m'}}\leq C_{rh}c_{rh}^{-1}\frac{(R_m\vee R_{m'})+\|s\|^2}{n(R_m\vee R_{m'})}\leq Cn^{-1}.$$
If $D_m\vee D_{m'}\leq \theta_n^2(\ln n)^{2\gamma}$,
$$\frac{v_{m,m'}^2}{R_m\vee R_{m'}}\leq \sqrt{C_{rh}c_{rh}^{-1}}\frac{\sqrt{(D_m\vee D_{m'})+\|s\|^2}}{R_{m_o}}\leq \frac{C}{\theta_n(\ln n)^{\gamma}}.$$
If $D_m\vee D_{m'}\geq \theta_n^2(\ln n)^{2\gamma}$,
$$\frac{v_{m,m'}^2}{R_m\vee R_{m'}}\leq \sqrt{C_{rh}c_{rh}^{-1}}\frac{\sqrt{(D_m\vee D_{m'})+\|s\|^2}}{D_m\vee D_{m'}}\leq \frac{C}{\theta_n(\ln n)^{\gamma}}.$$
There exists $\kappa>0$ such that $\theta_n^2(\ln n)^{2\gamma}\leq \kappa n$ since for all $m$ in $\M_n$, $R_m\leq n\|s-s_m\|^2+c_{rh}^{-1}d_m\leq (\|s\|^2+c_{rh}^{-1}) n$.
Hence Assumption {\bf[V]} holds with $\gamma$ given in Assumption {\bf [BR]} and $\epsilon_n=C\theta_n^{-1/2}$.
\vspace{0.2cm}\\
{\bf The example [Ada]}.\\
It comes from inequalities (\ref{ehisto}), (\ref{vhisto}) and Assumption {\bf [Ada]} that, for all $m$ and $m'$ in $\M_n$
$$e_{m,m'}\leq c_r^{-1}\;\textrm{and}\; v_{m,m'}^2\leq C_{ah}.$$
Thus, there exists a constant $\kappa>0$ such that, for all $m$ an $m'$ in $\M_n$, 
$$\sup_{(m,m')\in(\M_n)^2}\left\{\left(\frac{v_{m,m'}^2}{R_m\vee R_{m'}}\right)^2\vee \frac{e_{m,m'}}{R_m\vee R_{m'}}\right\}\leq \frac{\kappa}{\theta_n^2(\ln n)^{2\gamma}}.$$
Therefore Assumption {\bf [V]} holds also with $\gamma$ given in Assumption {\bf [BR]} and $\epsilon_n=\kappa\theta^{-1/2}_n$.
\subsection{Fourier spaces}\label{Ch2S33}
In this section, we assume that $s$ is supported in $[0,1]$. We introduce the classical Fourier basis. Let $\psi_0:[0,1]\rightarrow \R,\; x\mapsto 1$ and, for all $k\in \N^*$, we define the functions 
$$\psi_{1,k}:[0,1]\rightarrow \R,\; x\mapsto \sqrt{2}\cos(2\pi k x),\;\psi_{2,k}:[0,1]\rightarrow \R,\; x\mapsto \sqrt{2}\sin(2\pi k x).$$
For all $j$ in $\N^*$, let
$$m_j=\{0\}\cup \{(i,k),\; i=1,2,\; k=1,...,j\}\; \textrm{and}\;\M_n=\{m_j, j=1,...,n\}.$$
For all $m$ in $\M_n$, let $S_m$ be the space spanned by the family $(\psi_{\lambda})_{\lambda\in m}$. $(\psi_{\lambda})_{\lambda\in m}$ is an orthonormal basis of $S_m$ and for all $j$ in $1,...,n$, $d_{m_j}=2j+1$.\\
Let $j$ in $1,...n$, for all $x$ in $[0,1]$,
$$\sum_{\lambda\in m_j}\psi_{\lambda}^2(x)=1+2\sum_{k=1}^j\cos^2(2\pi k x)+\sin^2(2\pi k x)=1+2j=d_{m_j}.$$
Hence, for all $m$ in $\M_n$,
\begin{equation}\label{Dfou}
D_m=P\left(\sum_{\lambda\in m_j}\psi_{\lambda}^2\right)-\|s_m\|^2=d_m-\|s_m\|^2.
\end{equation}
It is also clear that, for all $m$, $m'$ in $\M_n$,
\begin{equation}\label{evfou}
e_{m,m'}=\frac{d_m\vee d_{m'}}n,\;v_{m,m'}^2\leq \|s\|\sqrt{d_m\vee d_{m'}}.
\end{equation}
The collection of Fourier spaces of dimension $d_m\leq n$ satisfies Assumption {\bf [PC]}, and the quantities $D_m$ $e_{m,m'}$ and $v_{m,m'}^2$ satisfy the same inequalities as in the collection {\bf [Reg]}, therefore, {\bf[V]} comes also in this collection from {\bf [BR]}. We have obtained the following corollary of Theorem \ref{OracleB1}.

\begin{coro}
Let $\M_n$ be either a collection of histograms satisfying Assumptions {\bf[PC]-[Reg]} or {\bf[PC]-[Ada]} or the collection of Fourier spaces of dimension $d_m\leq n$. Assume that $s$ satisfies Assumption {\bf [BR]} for some $\gamma>1$ and $\theta_n\rightarrow \infty$. Then, there exist constants $\kappa>0$ and $C>0$ such that the estimator $\tilde{s}$ selected by a resampling penalty satisfies
$$\p\left(\|s-\tilde{s}\|^2\leq (1+\kappa\theta_n^{-1/2})\inf_{m\in\M_n}\|s-\hat{s}_m\|^2\right)\geq 1-Ce^{-\frac12(\ln n)^{\gamma}}.$$
\end{coro}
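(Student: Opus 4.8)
The plan is to deduce the corollary from Theorem \ref{OracleB1} by verifying that, for each of the three collections listed, Assumption {\bf [V]} holds with an explicit sequence $\epsilon_n$ proportional to $\theta_n^{-1/2}$. Since Theorem \ref{OracleB1} concludes with leading constant $1+100\epsilon_n$, plugging in $\epsilon_n \leq \kappa_0\theta_n^{-1/2}$ (for a collection-dependent $\kappa_0$) immediately gives the stated inequality with $\kappa = 100\kappa_0$, and the same exceptional probability $Ce^{-\frac12(\ln n)^\gamma}$ carries over verbatim. So the entire content of the proof is the implication \textbf{[BR]} (plus \textbf{[PC]}, where relevant) $\Rightarrow$ \textbf{[V]}. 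For the histogram collections \textbf{[Reg]} and \textbf{[Ada]} this implication has already been carried out in full in Section \ref{THC}: under \textbf{[PC]-[Reg]} one gets $\epsilon_n = C\theta_n^{-1/2}$, and under \textbf{[PC]-[Ada]} one gets $\epsilon_n = \kappa\theta_n^{-1/2}$. For the Fourier collection, the discussion following \eqref{evfou} observes that the collection satisfies \textbf{[PC]} and that $D_m$, $e_{m,m'}$, $v_{m,m'}^2$ obey exactly the same bounds as in the \textbf{[Reg]} case (compare \eqref{Dfou}, \eqref{evfou} with \eqref{Dhisto}, \eqref{ehisto}, \eqref{vhisto} specialized to \textbf{[Reg]}), so the same chain of estimates yields \textbf{[V]} with $\epsilon_n = C\theta_n^{-1/2}$.

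Concretely, I would first recall the reduction already established under \textbf{[PC]}: since $R_m \leq C_u n^{\alpha_u}$, only indices $k,k' \leq C_u n^{\alpha_u}$ contribute, so $\ln((1+k)(1+k')) \leq \kappa \ln n$, and \textbf{[PC]} gives $\ln(1+{\rm Card}(\M_n^k)) \leq \kappa \ln n$ for every $k$; hence $l_{n,\gamma}(k,k') \leq \kappa(\ln n)^\gamma$ uniformly, and the double supremum defining \textbf{[V]} is bounded by
$$\sup_{(m,m')\in(\M_n)^2}\left\{\left(\frac{v_{m,m'}^2}{R_m\vee R_{m'}}\right)^2\vee \frac{e_{m,m'}}{R_m\vee R_{m'}}\right\}\cdot\kappa(\ln n)^{2\gamma}.$$
Then, in each case, I would insert the available bounds on $e_{m,m'}$ and $v_{m,m'}^2$ and split according to whether $D_m\vee D_{m'}$ is below or above $\theta_n^2(\ln n)^{2\gamma}$, exactly as in Section \ref{THC}: in the low regime one uses $R_m\vee R_{m'}\geq R_{m_o}\geq \theta_n^2(\ln n)^{2\gamma}$ from \textbf{[BR]}, and in the high regime one uses $R_m\vee R_{m'}\geq D_m\vee D_{m'}$; either way the ratio $v_{m,m'}^2/(R_m\vee R_{m'})$ is $O(1/(\theta_n(\ln n)^\gamma))$ and $e_{m,m'}/(R_m\vee R_{m'})$ is $O(1/(\theta_n^2(\ln n)^{2\gamma}))$ (for histograms one also needs $\theta_n^2(\ln n)^{2\gamma}\leq \kappa n$, which follows from $R_m\leq (\|s\|^2+c_{rh}^{-1})n$). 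Multiplying by $\kappa(\ln n)^{2\gamma}$ gives a $\theta_n^{-4}$-type bound on the quantity inside \textbf{[V]}, i.e. $\epsilon_n^4 \leq \kappa \theta_n^{-4}$, so $\epsilon_n\leq\kappa\theta_n^{-1/2}$ after adjusting constants — note $\epsilon_n\to 0$ since $\theta_n\to\infty$, so Theorem \ref{OracleB1} applies.

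There is essentially no obstacle here: the work has been done piecewise in Section \ref{Ch2S3}, and the only thing the corollary adds is to package those three verifications together and feed the resulting $\epsilon_n$ into Theorem \ref{OracleB1}. The one point that requires a line of care is the Fourier case, where one must check that the bound $v_{m,m'}^2\leq\|s\|\sqrt{d_m\vee d_{m'}}$ combined with $D_m = d_m-\|s_m\|^2$ really does reproduce the \textbf{[Reg]} estimates — but this is immediate from $d_m = D_m + \|s_m\|^2 \leq D_m + \|s\|^2$, so $\sqrt{d_m\vee d_{m'}}\leq\sqrt{(D_m\vee D_{m'})+\|s\|^2}$, matching the quantity handled in the \textbf{[Reg]} computation. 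Hence the corollary follows.
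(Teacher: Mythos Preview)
Your proposal is correct and follows exactly the paper's approach: the corollary is not given a separate proof in the paper but is simply the packaging of the verifications of {\bf[V]} carried out in Section~\ref{Ch2S3} (for {\bf[Reg]}, {\bf[Ada]}, and Fourier) together with an application of Theorem~\ref{OracleB1}. One small slip: you write ``$\epsilon_n^4\leq\kappa\theta_n^{-4}$'', but the computation actually gives $\epsilon_n^4\leq\kappa\theta_n^{-2}$ (since $(v_{m,m'}^2/(R_m\vee R_{m'}))^2\,l_{n,\gamma}^2$ and $(e_{m,m'}/(R_m\vee R_{m'}))\,l_{n,\gamma}^2$ are each of order $\theta_n^{-2}$, not $\theta_n^{-4}$), which is precisely what yields the stated $\epsilon_n\leq\kappa\theta_n^{-1/2}$.
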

{\bf Comment:} Assumption {\bf [BR]} is hard to check in practice. We mentioned that it holds in Example {\bf HR} provided that $s$ is H\"olderian, non constant and compactly supported (see Arlot \cite{Ar08}). It is also classical to build functions satisfying {\bf [BR]} with the Fourier spaces in order to prove that the oracle reaches the minimax rate of convergence over some Sobolev balls, see for example Birg\'e $\&$ Massart \cite{BM97}, Barron, Birg\'e $\&$ Massart \cite{BBM99} or Massart \cite{Ma07}. In these cases, there exist $c>0$, $\alpha>0$ such that $\theta_n\geq cn^{\alpha}$. In more general situations, we can use the same trick as Arlot \cite{Ar08} and use our main theorem only for the models with dimension $d_m\geq (\ln n)^{4+2\gamma}$, they satisfy {\bf [BR]} with $\theta_n=(\ln n)^2$, at least when $n$ is sufficiently large, because
$$\|s\|^2+R_m\geq \|s\|^2+D_m\geq cd_m\geq c(\ln n)^4(\ln n)^{2\gamma}.$$ 
With our concentration inequalities, we can control easily the risk of the models with dimension $d_m\leq (\ln n)^{4+2\gamma}$ by $\kappa(\ln n)^{3+5\gamma/2}$ with probability larger than $1-Ce^{-\frac12(\ln n)^{\gamma}}$ and we can then deduce the following corollary.

\begin{coro}
Let $\M_n$ be either a collection of histograms satisfying Assumptions {\bf[PC]-[Reg]} or {\bf[PC]-[Ada]} or the collection of Fourier spaces of dimension $d_m\leq n$. There exist constants $\kappa>0$, $\eta>3+5\gamma/2$ and $C>0$ such that the estimator $\tilde{s}$ selected by a resampling penalty satisfies
$$\p\left(\|s-\tilde{s}\|^2\leq (1+\kappa(\ln n)^{-1})\left(\inf_{m\in\M_n}\|s-\hat{s}_m\|^2+\frac{(\ln n)^{\eta}}n\right)\right)\geq 1-Ce^{-\frac12(\ln n)^{\gamma}}.$$
\end{coro}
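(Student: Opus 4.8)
The plan is to reduce the general case to the situation already handled by Theorem~\ref{OracleB1} by splitting the collection $\M_n$ into the models of ``small'' dimension and those of ``large'' dimension, exactly as suggested in the comment preceding the statement. Set $\eta_0 = 4+2\gamma$ and write $\M_n = \M_n^{\mathrm{small}} \sqcup \M_n^{\mathrm{large}}$ where $\M_n^{\mathrm{large}} = \{m \in \M_n,\; d_m \geq (\ln n)^{\eta_0}\}$. The first step is to observe that, in each of the three collections considered (histograms {\bf [Reg]}, histograms {\bf [Ada]}, Fourier spaces), one has $\|s\|^2 + R_m \geq \|s\|^2 + D_m \geq c\, d_m$ for a constant $c>0$ depending only on the collection --- this follows from the explicit formulas $D_m = \sum_{\lambda}\mathrm{Var}(\psi_\lambda(X))$ together with, in the {\bf [Reg]} and Fourier cases, $D_m \geq C^{-1}d_m - \|s_m\|^2$ and $\|s_m\|^2 \leq \|s\|^2$, and in the {\bf [Ada]} case the already noted bound $d_m \leq Cn$. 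Consequently, on $\M_n^{\mathrm{large}}$ one has $R_m \geq c(\ln n)^{\eta_0} = c(\ln n)^4 (\ln n)^{2\gamma} - \|s\|^2 \geq \theta_n^2(\ln n)^{2\gamma}$ with $\theta_n = \sqrt{c/2}\,(\ln n)^2 \to \infty$ for $n$ large, so $\M_n^{\mathrm{large}}$, equipped with {\bf [PC]}, satisfies Assumption {\bf [BR]} of Section~\ref{Ch2S3} with this $\theta_n$, and therefore (by the computations of Sections~\ref{THC}--\ref{Ch2S33}) satisfies {\bf [V]} with $\epsilon_n = \kappa\theta_n^{-1/2} = \kappa'(\ln n)^{-1}$.

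The second step is to apply Theorem~\ref{OracleB1} to the restricted collection $\M_n^{\mathrm{large}}$ with the resampling penalty: this yields, on an event $\Omega_1$ of probability at least $1 - Ce^{-\frac12(\ln n)^\gamma}$,
$$\|s-\tilde s\|^2 \leq (1+100\epsilon_n)\inf_{m\in\M_n^{\mathrm{large}}}\|s-\hat s_m\|^2,$$
provided $\hat m \in \M_n^{\mathrm{large}}$. The third step is to control the contribution of the discarded models $\M_n^{\mathrm{small}}$ and to show that removing them costs only an additive $(\ln n)^\eta/n$ term. For this I would argue in two parts. First, for the oracle: if the true oracle $m_o$ lies in $\M_n^{\mathrm{small}}$, then $d_{m_o} \leq (\ln n)^{\eta_0}$, so $\|s-s_{m_o}\|^2 \leq \inf_{m}\|s-\hat s_m\|^2 + $ (something small), and more to the point $D_{m_o}/n \leq (\ln n)^{\eta_0}/n$; using the concentration inequality (\ref{minpen2}) of Proposition~\ref{concbasic} with $x \asymp (\ln n)^\gamma$ (so that $e^{-x/20}$ absorbs into the global exceptional probability), together with the crude bounds $e_m \leq d_m/n$, $v_m^2 \leq c\|s\|\sqrt{d_m}$, $D_m \leq d_m$ valid in all three collections, one gets $\|s-\hat s_{m_o}\|^2 \leq \|s-s_{m_o}\|^2 + \kappa(\ln n)^{\eta_0 + \gamma/2 + \varepsilon}/n$ on an event $\Omega_2$ of the right probability, i.e. $\inf_{m\in\M_n}\|s-\hat s_m\|^2 \geq \inf_{m\in\M_n^{\mathrm{large}}}\|s-\hat s_m\|^2 - \kappa(\ln n)^\eta/n$ for a suitable $\eta > 3 + 5\gamma/2$. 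Second, one must rule out that $\hat m \in \M_n^{\mathrm{small}}$ is selected in a way that harms the bound: here I would note that the penalized criterion value at $\hat m$ is no larger than at any $m \in \M_n^{\mathrm{large}}$, and redo the basic decomposition (\ref{or1}) restricted so that the model $m$ on the right-hand side ranges over $\M_n^{\mathrm{large}}$ while $\hat m$ is arbitrary --- the terms $2p(\hat m) - \pen(\hat m)$ and $\delta(\hat m, m)$ are controlled uniformly over all of $\M_n$ by Propositions~\ref{concbasic} and~\ref{cboot} together with the uniform-in-$k,k'$ structure of Assumption {\bf [V]} (which holds for the full collection with the weaker rate $(\ln n)^{-1}$, since the small models only contribute dimensions $\leq (\ln n)^{\eta_0}$, hence $v_m^2, e_m$ negligible). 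Combining $\Omega_1 \cap \Omega_2$ and collecting the estimates gives the claimed inequality with leading constant $1 + \kappa(\ln n)^{-1}$ and remainder $(\ln n)^\eta/n$.

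The main obstacle is the bookkeeping in the third step: one has to make sure that truncating $\M_n$ to $\M_n^{\mathrm{large}}$ and applying Theorem~\ref{OracleB1} there does \emph{not} require that the selected $\hat m$ from the full procedure already lie in $\M_n^{\mathrm{large}}$ --- a priori $\pen(m)$ could be smallest on a tiny model. The clean way around this is to observe that for $m \in \M_n^{\mathrm{small}}$ the penalized empirical contrast $P_nQ(\hat s_m) + \pen(m)$ differs from $\|s-\hat s_m\|^2 - \|s\|^2$ by at most $|\pen(m) - 2p(m)| + 2|p(m)-D_m/n| + 2|\nu_n(s_m)|$, all of which are $O((\ln n)^\eta/n)$ with high probability by the concentration results (the first via Proposition~\ref{cboot}, the second via Proposition~\ref{concbasic}, the third via Bernstein since $\mathrm{Var}(s_m(X)) \leq v_m^2 \leq c\sqrt{d_m}$). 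Hence whatever $\hat m$ the full procedure selects, its risk is within $\kappa(\ln n)^\eta/n$ of $\inf_{m\in\M_n^{\mathrm{large}}}\|s-\hat s_m\|^2$, and one concludes. Everything else is a routine substitution of the explicit bounds on $D_m, e_m, v_m$ into the already-proven concentration inequalities with $x$ of order $(\ln n)^\gamma$.
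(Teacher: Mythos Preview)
Your plan coincides with the paper's own argument, which is only the two-sentence sketch in the Comment preceding the corollary: split at $d_m=(\ln n)^{4+2\gamma}$, verify that the large-dimension subcollection satisfies {\bf[BR]} with $\theta_n=(\ln n)^2$ (hence {\bf[V]} with $\epsilon_n\asymp(\ln n)^{-1}$) and apply Theorem~\ref{OracleB1} there, then absorb the small-dimension models and the possibility $\hat m\in\M_n^{\mathrm{small}}$ into the additive $(\ln n)^\eta/n$ remainder via Propositions~\ref{concbasic} and~\ref{cboot} with $x$ of order $(\ln n)^\gamma$. One small correction: your parenthetical claim that {\bf[V]} holds on the \emph{full} collection is false --- when both $m,m'$ are small there is no lower bound on $R_m\vee R_{m'}$ without {\bf[BR]} --- so you must rely on the direct-concentration argument of your final paragraph (absolute errors of size $(\ln n)^\eta/n$ for small models) rather than on a global {\bf[V]}.
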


\section{Simulation study}\label{Ch2S4}
We propose in this section to show the practical performances of the slope algorithm and the resampling penalties on two examples. We estimate the density
$$s(x)=\frac34x^{-1/4}1_{[0,1]}(x)$$
and we compare the three following methods. 
\begin{enumerate}
\item The first one is the slope heuristic applied with the linear dimension $d_m$ of the models. We observe two main behaviors of $d_{\hat{m}(K)}$ with respect to $K$. Most of the times, we only observe one jump, as in Figure 1, and we find $K_{\min}$ easily.

\begin{figure}[!h]
\centering
\includegraphics[width=8cm,height=6cm]{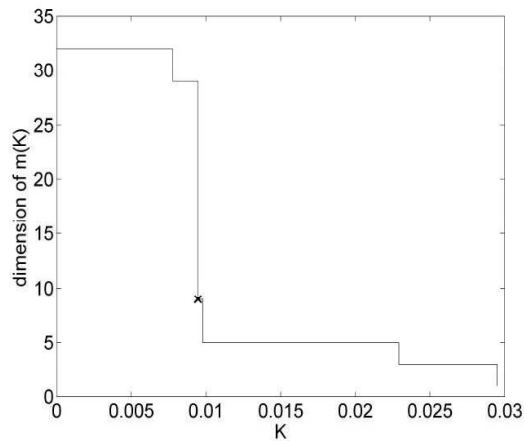}
\caption{Classical behavior of $K\mapsto d_{\hat{m}(K)}$}
\end{figure}

\noindent We also observe more difficult situations as the one of Figure 2 below, where we can see several jumps. In these cases, as prescribed in the regression framework by Arlot $\&$ Massart \cite{AM08}, we choose the constant $K_{\min}$ realizing the maximal jump of $d_{\hat{m}(K)}$. Arlot $\&$ Massart \cite{AM08} also proposed to select $K_{\min}$ as the minimal $K$ such that $d_{\hat{m}(K)}\leq d_{m^*}(\ln n)^{-1}$, but they obtained worse performances of the selected estimator in their simulations.\\
We justify this method only for collection of models where $d_m\simeq KD_m$ for some constant $K$. We will see that it gives really good performances when this condition is satisfied.
\item The second method is the resampling based penalization algorithm of Theorem \ref{OracleB1}. Note here that all the resampling penalties $D_m^W/n$ can be easily computed, without any Monte Carlo approximations. Actually, for all resampling scheme,
\begin{eqnarray*}
\frac{D_m^W}n&=&\frac{1}{n}\sum_{\lambda\in m}\left(P_n\psi_{\lambda}^2-\frac1{n(n-1)}\sum_{i\neq j=1}^n\psi_{\lambda}(X_i)\psi_{\lambda}(X_j)\right).
\end{eqnarray*}
Resampling penalties give always good approximations of $D_m$. However, in non asymptotic situations, it may be usefull to overpenalize a little bit in order to improve the leading constants in the oracle inequality (in Theorem \ref{P3}, imagine that $46\epsilon_n$ is very close to $1$).
\item In a third method, we propose therefore to use the slope algorithm applied with a complexity $D_m^W$. By this way, we hope to overpenalize a little bit the resampling penalty when it is necessary.
\end{enumerate}

\subsection{Example 1: regular case} 
In the first example, we consider the collection of regular histograms described in example {\bf HR} and we observe $n=100$ data. In this example, we saw that $D_m^W\simeq D_m\simeq d_m$. We can actually verify in Figure 2 that these quantities almost coincide for the selected model.

\begin{figure}[!h]
\centering
\includegraphics[width=12cm,height=6cm]{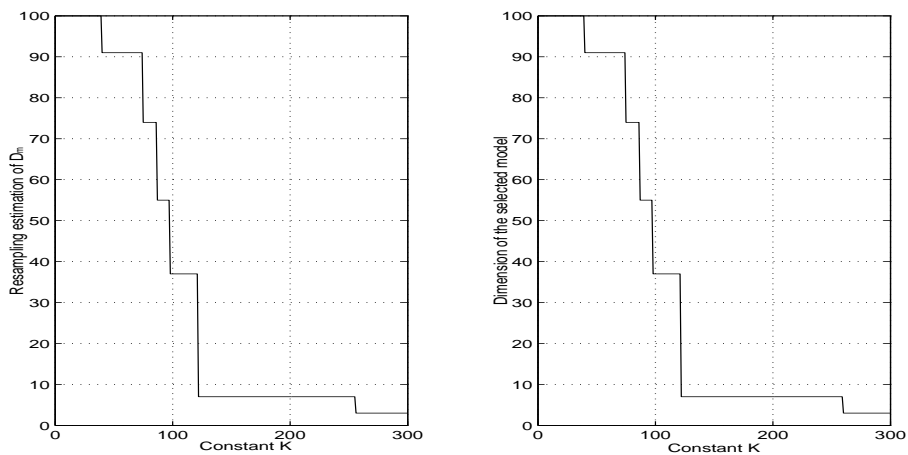}
\caption{Comparison of $d_m$ and $D_m^W$ on the selected model}
\end{figure}

\noindent We compute $N=1000$ times the oracle constant $c=\|s-\tilde{s}\|^2/(\inf_{m\in\M_n}\|s-\hat{s}_m\|^2)$ for the 3 methods. We put in the following array the mean, the median and the $0.95$-quantile, $q_{0.95}$ of these quantities.
\begin{center}
$\begin{array}{|c|c|c|c|}
\hline {\rm method} & {\rm mean}\; {\rm of}\;{\rm the}\; $N$\;{\rm constants}\;$c$ &{\rm median}&q_{0.95}\\
\hline {\rm slope+d_m} & 3.56 & 2.30 &10.07\\
\hline {\rm resampling} & 4.43 & 2.52 & 15.47\\
\hline {\rm resampling+slope} & 3.57 & 2.21 &10.86\\
\hline
\end{array}$
\end{center}
We observe that the slope algorithm allows to improve the resampling penalty in practice. This may be due to a little overpenalization even if it is not a straightforward consequence of our theoretical results. Note that, as $d_m\simeq D_m^W$, the slope algorithm leads to the same results when applied with $d_m$ or with $D_m^W$. Although we have an explicite formula to compute the resampling penalties, the computation time is much longer if we use $D_m^W$. Therefore, we clearly recommand to use the slope algorithm with $d_m$ for regular collections of model, as regular histograms or Fourier spaces described in Section \ref{Ch2S33}.
\subsection{Example 2: a more complicated collection} 
In the next example, we want to show that the linear dimension shall not be used in general. Let us consider a slightly more complicated collection. Let $k,J_1,J_2,n$ be four non null integers satisfying $k\leq n$, $J_1\leq k$, $J_2\leq n-k$. We denote by $S_{k,J_1,J_2,n}$ the linear space of histograms on the following partition.
\begin{eqnarray*}
&&\left\{\left[l\frac{k}{J_1n},(l+1)\frac{k}{J_1n}\right[,\; l=0,...,J_1-1\right\}\\
&&\cup\left\{\left[\frac kn+l\frac{1-k/n}{J_2},\frac kn+(l+1)\frac{1-k/n}{J_2}\right[,  \;l=0,...J_2-1\right\}.
\end{eqnarray*}
Let $n\in\N^*$ and let $\M_n=\{(k,J_1,J_2)\in(\N^*)^3;\;k\leq n,\;J_1\leq k,\;J_2\leq n-k\}$. It is clear that $\textrm{Card}(\M_n)\leq n^3$. The oracle of this collection is better than the previous one since the regular histograms belongs to $(S_{m,n})_{m\in\M_n}$. It is easy to check that the dimension of $S_{k,J_1,J_2,n}$ is equal to $J_1+J_2$ and that $D_{k,J_1,J_2,n}$ is equal to $(nJ_1/k)F(k/n)+(nJ_2/(n-k))(1-F(k/n))-\|s_{k,J_1,J_2,n}\|^2/n$, where $F$ is the distribution function of the observations. Hence, there is no constant $K_o$ such that $K_od_{k,J_1,J_2,n}\simeq D_{k,J_1,J_2,n}$ as in the previous example. Figure 3 let us see this fact on the selected model.

\begin{figure}[!h]
\centering
\includegraphics[width=12cm,height=8cm]{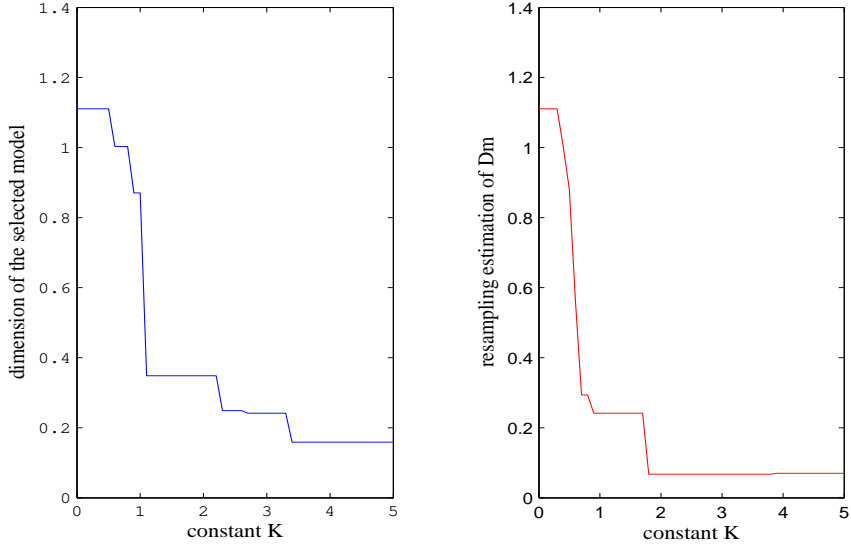}
\caption{Comparison of $d_m$ and $D_m^W$ on the selected model}
\end{figure}
\noindent
We also compute $N=1000$ times the oracle constant $c=\|s-\tilde{s}\|^2/(\inf_{m\in\M_n}\|s-\hat{s}_m\|^2)$ for the 3 methods, taking $n=100$ observations each time. The results are summarized in following array.
\begin{center}
$\begin{array}{|c|c|c|c|}
\hline {\rm method} & {\rm mean}\; {\rm of}\;{\rm the}\; $N$\;{\rm constants}\;$c$ &{\rm median}&q_{0.95}\\
\hline {\rm slope+d_m} & 8.30 & 7.01&19.73 \\
\hline {\rm resampling} & 6.11 & 5.08 &13.52\\
\hline {\rm resampling+slope} & 5.33 & 4.04 &12.92\\
\hline
\end{array}$
\end{center}
The slope heuristic gives bad results when applied with $d_m$. This is due to the fact that $d_m$ is not proportional to $D_m$ here. The resampling based penalty $2D_m^W/n$ is much better and, as in the regular case, it is well improved by the slope algorithm. Therefore, for general collections of models where we do not know an optimal shape of the ideal penalty, we recommand to apply the slope algorithm with a complexity equal to $D_m^W$.

\section{Proofs}\label{Ch2S5}
\subsection{Proof of Proposition \ref{concbasic}}
It is a straightforward application of Corollary \ref{C4} in the appendix.
\subsection{Technical lemmas}
Before giving the proofs of the main theorems, we state and prove some important technical lemmas that we will use repeatedly all along the proofs. Let us recall here the main notations. For all $m$, $m'$ in $\M_n$, 
$$p(m)=\|s_m-\hat{s}_m\|^2,\;D_m=n\E(p(m))=n\E\left(\|\hat{s}_m-s_m\|^2\right)$$
$$R_m=n\E\left(\|s-\hat{s}_m\|^2\right)=n\|s-s_m\|^2+D_m,\; \delta(m,m')=\nu_n(s_m-s_{m'}).$$
For all $n\in\N^*,\; k>0,\;k'>0,\;\gamma>0,$, let $[k]$ be the integer part of $k$ and let
$$l_{n,\gamma}(k,k')=\ln((1+{\rm Card(\M_n^{[k]})})(1+{\rm Card(\M_n^{[k']})}))+\ln((1+k)(1+k'))+(\ln n)^{\gamma}.$$
Recall that Assumption {\bf [V]} implies that, for all $m,m'$ in $\M_n$, 
\begin{eqnarray}\label{***}
v_{m,m'}^2l_{n,\gamma}(R_m,R_{m'})&\leq& \epsilon_n^2(R_m\vee R_{m'}),\nonumber\\
e_{m,m'}(l_{n,\gamma}(R_m,R_{m'}))^2&\leq& \epsilon^4_n(R_m\vee R_{m'}).
\end{eqnarray}
Let us  prove a simple result
\begin{lemma}\label{sommation}
For all $K>1$,
\begin{equation}\label{som1}
\Sigma(K)=\sum_{k\in \N}\sum_{m\in\M_n^k}e^{-K[\ln(1+{\rm Card(\M_n^k)})+\ln(1+k)]}<\infty.
\end{equation}
For all $m$ in $\M_n$, let $l_m=l_{n,\gamma}(R_{m},R_m)$, then, for all $K>1/\sqrt{2}$, 
\begin{equation}\label{som2}
\sum_{m\in\M_n}e^{-K^2l_{m}}=\Sigma(2K^2)e^{-K^2(\ln n)^{\gamma}}.
\end{equation}
For all $m$, $m'$ in $\M_n$, let $l_{m,m'}=l_{n,\gamma}(R_{m},R_{m'})$, then, for all $K>1$, 
\begin{equation}\label{som3}
\sum_{(m,m')\in(\M_n)^2}e^{-K^2l_{m,m'}}=(\Sigma(K^2))^2e^{-K^2(\ln n)^{\gamma}}.
\end{equation}
\end{lemma}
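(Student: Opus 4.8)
The plan is to reduce all three claims to a single comparison with the convergent series $\sum_{k\in\N}(1+k)^{-K}$ ($K>1$), exploiting that the summands are either constant over the blocks $\M_n^k$ or factor across the two indices.

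First I would prove (\ref{som1}). For fixed $k$ the summand $e^{-K[\ln(1+{\rm Card}(\M_n^k))+\ln(1+k)]}$ does not depend on $m\in\M_n^k$, so summing over that block multiplies it by ${\rm Card}(\M_n^k)$, giving
$$\Sigma(K)=\sum_{k\in\N}\frac{{\rm Card}(\M_n^k)}{\bigl(1+{\rm Card}(\M_n^k)\bigr)^{K}}\cdot\frac1{(1+k)^{K}}.$$
The key (and essentially only) estimate is $\dfrac{{\rm Card}(\M_n^k)}{(1+{\rm Card}(\M_n^k))^{K}}\le\bigl(1+{\rm Card}(\M_n^k)\bigr)^{1-K}\le 1$, valid because $K>1$ (and trivially true when the block is empty). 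Hence $\Sigma(K)\le\sum_{k\in\N}(1+k)^{-K}<\infty$.

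Next, for (\ref{som2}) I would expand $l_m=l_{n,\gamma}(R_m,R_m)=2\ln(1+{\rm Card}(\M_n^{[R_m]}))+2\ln(1+R_m)+(\ln n)^{\gamma}$, factor out the common $e^{-K^2(\ln n)^{\gamma}}$, regroup the sum by blocks (so $[R_m]=k$ on $\M_n^k$), and use $R_m\ge k$, hence $\ln(1+R_m)\ge\ln(1+k)$, to bound what remains by $\Sigma(2K^2)$; the hypothesis $K>1/\sqrt2$ is exactly what forces $2K^2>1$, so (\ref{som1}) applies. For (\ref{som3}) the point is that $l_{m,m'}=l_{n,\gamma}(R_m,R_{m'})$ splits as a term in $m$ alone, a term in $m'$ alone, and $(\ln n)^{\gamma}$; hence $e^{-K^2l_{m,m'}}$ factors and the sum over $(\M_n)^2$ equals $e^{-K^2(\ln n)^{\gamma}}$ times the square of $\sum_{m\in\M_n}e^{-K^2[\ln(1+{\rm Card}(\M_n^{[R_m]}))+\ln(1+R_m)]}$, each factor of the square being $\le\Sigma(K^2)<\infty$ since $K>1$, by the same block regrouping as above.

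I do not expect any genuine obstacle: the lemma is combinatorial bookkeeping built on a single series comparison. The only point needing a little care is the systematic replacement of $R_m$ by its integer part $[R_m]=k$ inside the logarithms, legitimate since $t\mapsto\ln(1+t)$ is nondecreasing; this also shows that the equalities in (\ref{som2})--(\ref{som3}) are in fact inequalities ($\le$), the slack being $\ln(1+R_m)-\ln(1+[R_m])\ge 0$, which is immaterial for how these bounds are used later.
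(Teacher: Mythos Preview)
Your proposal is correct and follows essentially the same approach as the paper: both prove (\ref{som1}) by bounding the inner block sum by $1$ and comparing with $\sum_k(1+k)^{-K}$, then derive (\ref{som2}) and (\ref{som3}) by regrouping over the blocks $\M_n^k$, using $l_m\geq 2[\ln(1+{\rm Card}(\M_n^k))+\ln(1+k)]+(\ln n)^{\gamma}$ and the factorization of $l_{m,m'}$. Your observation that (\ref{som2})--(\ref{som3}) are really inequalities $\leq$ (due to $\ln(1+R_m)\geq\ln(1+k)$) is exactly right and matches what the paper's proof actually establishes.
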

\begin{proof}:
Inequality (\ref{som1}) comes from the fact that, when $K>1$, 
$$\forall k\in\N,\;\sum_{m\in\M_n^k}e^{-K[\ln(1+{\rm Card}(\M_n^k))]}\leq 1,\;{\rm and}\;\sum_{k\in\N^*}e^{-K\ln k}<\infty.$$
For all integer $k$ such that $\M_n^k\neq \emptyset$, for all $m$ in $\M_n^k$, $l_m\geq 2[\ln(1+{\rm Card}(\M_n^k))+\ln(1+k)]+(\ln n)^{\gamma}$, thus, for all $K>1/\sqrt{2}$, it comes from (\ref{som1}) that
$$\sum_{m\in\M_n}e^{-K^2l_m}\leq e^{-K^2(\ln n)^{\gamma}}\sum_{k\in\N}\sum_{m\in\M_n^k}e^{-2K^2[\ln(1+{\rm Card}(\M_n^k))+\ln(1+k)]}\leq \Sigma(2K^2)e^{-K^2(\ln n)^{\gamma}}.$$
Finally, for all integers $(k,k')$ such that $\M_n^k\times \M_n^{k'}\neq \emptyset$, 
$$l_{m,m'}\geq \ln(1+{\rm Card}(\M_n^k))+\ln(1+k)+\ln(1+{\rm Card}(\M_n^{k'}))+\ln(1+k')+(\ln n)^{\gamma}.$$
Thus, from (\ref{som1}),
$$\sum_{(m,m')\in(\M_n^2)}e^{-K^2l_{m,m'}}=\left(\sum_{k\in\N}\sum_{m\in\M_n^k}e^{-K^2[\ln(1+{\rm Card(\M_n^k)})+\ln(1+k)]}\right)^2e^{-K^2(\ln n)^{\gamma}}.$$
\end{proof}

\begin{lemma}\label{events}
Let $\M_n$ be a collection of models satisfying Assumption {\bf [V]}. We consider the following events.
\begin{eqnarray*}
\Omega_{\delta}&=&\left\{\forall (m,m')\in\M_n^2,\; \delta(m,m')\leq 6\epsilon_n\frac{R_m\vee R_{m'}}{n}\right\}\\
\Omega_{p}&=&\bigcap_{m\in\M_n}\left\{\left\{p(m)-\frac{D_m}n\leq 10\epsilon_n\frac{R_m}{n}\right\}\cap\left\{p(m)-\frac{D_m}n\geq- 20\epsilon_n\frac{R_m}{n}\right\}\right\}
\end{eqnarray*}
and $\Omega_T=\Omega_{\delta}\cap\Omega_{p}$. Then there exists a constant $C>0$ such that 
$$\p(\Omega_{\delta}^c)\leq Ce^{-(\ln n)^{\gamma}},\;\p(\Omega_{p}^c)\leq Ce^{-\frac12(\ln n)^{\gamma}},\;\p(\Omega_T^c)\leq Ce^{-\frac12(\ln n)^{\gamma}}.$$
\end{lemma}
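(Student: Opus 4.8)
The three bounds are proved separately, and the one on $\p(\Omega_T^c)$ follows at once from $\Omega_T^c=\Omega_\delta^c\cup\Omega_p^c$ and a union bound. For $\Omega_\delta$ and $\Omega_p$ the scheme is identical: prove a deviation inequality for each fixed index $m$ (resp.\ each fixed pair $(m,m')$), apply it at a threshold $x$ proportional to $l_m=l_{n,\gamma}(R_m,R_m)$ (resp.\ $l_{m,m'}=l_{n,\gamma}(R_m,R_{m'})$), use Assumption \textbf{[V]} in the form (\ref{***}) together with the elementary facts $D_m\le R_m$ and $n\|s_m-s_{m'}\|^2\le 2(R_m+R_{m'})\le 4(R_m\vee R_{m'})$ (the last because $n\|s-s_j\|^2\le R_j$) to bound the deviation term by the constant required in the definition of the event, and finally sum the exceptional probabilities using Lemma \ref{sommation}. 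Throughout we may assume $n$ large enough that $\epsilon_n$ is small; for the remaining finitely many $n$ the asserted upper bounds exceed $1$ as soon as $C$ is chosen large, so they hold trivially.

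\textbf{The event $\Omega_\delta$.} For fixed $m,m'$, $\delta(m,m')=\nu_n(s_m-s_{m'})$ is a centered empirical average of i.i.d.\ copies of the bounded function $s_m-s_{m'}$. Normalising to $t=(s_m-s_{m'})/\|s_m-s_{m'}\|\in S_m+S_{m'}$, $\|t\|=1$, the definitions of $v_{m,m'}^2$ and $e_{m,m'}$ give $\Var((s_m-s_{m'})(X))\le\|s_m-s_{m'}\|^2v_{m,m'}^2$ and $\|s_m-s_{m'}\|_\infty\le\|s_m-s_{m'}\|\sqrt{ne_{m,m'}}$. Bernstein's inequality then bounds by $e^{-x}$ the probability that $\delta(m,m')$ exceeds $\sqrt{2\|s_m-s_{m'}\|^2v_{m,m'}^2x/n}+\|s_m-s_{m'}\|_\infty x/(3n)$. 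Taking $x=K^2l_{m,m'}$ with $K>1$ fixed, and inserting the bounds above with (\ref{***}) (namely $v_{m,m'}^2l_{m,m'}\le\epsilon_n^2(R_m\vee R_{m'})$ and $e_{m,m'}l_{m,m'}^2\le\epsilon_n^4(R_m\vee R_{m'})$) and $n\|s_m-s_{m'}\|^2\le 4(R_m\vee R_{m'})$, the deviation term is at most $6\epsilon_n(R_m\vee R_{m'})/n$ for $n$ large. Summing over ordered pairs (which also takes care of the lower tail, since $\delta(m',m)=-\delta(m,m')$) and using (\ref{som3}) gives $\p(\Omega_\delta^c)\le(\Sigma(K^2))^2e^{-K^2(\ln n)^\gamma}\le Ce^{-(\ln n)^\gamma}$, since $K^2>1$.

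\textbf{The event $\Omega_p$.} Here I apply Proposition \ref{concbasic} for each $m$ with $x=20K^2l_m$, $K^2>1/2$ fixed, so that the exceptional probabilities in (\ref{minpen1}) and (\ref{minpen2}) become $e^{-K^2l_m}$ and $2.8\,e^{-K^2l_m}$. Using $D_m\le R_m$ and (\ref{***}) in the forms $v_m^2l_m\le\epsilon_n^2R_m$, $e_ml_m^2\le\epsilon_n^4R_m$, each of $D_m^{3/4}(e_mx^2)^{1/4}$, $\sqrt{D_mv_m^2x}$, $v_m^2x$ and $e_mx^2$ is a fixed multiple of $\epsilon_nR_m$ (the last two even of $\epsilon_n^2R_m$ and $\epsilon_n^4R_m$), and one checks that the two sums appearing in (\ref{minpen1}) and (\ref{minpen2}) are respectively at most $10\epsilon_nR_m/n$ and $20\epsilon_nR_m/n$ for $n$ large. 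A union bound over $m\in\M_n$ together with (\ref{som2}) then yields $\p(\Omega_p^c)\le 3.8\,\Sigma(2K^2)e^{-K^2(\ln n)^\gamma}\le Ce^{-\frac12(\ln n)^\gamma}$, since $K^2>1/2$.

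\textbf{Main obstacle.} All the probabilistic content is already available (Bernstein's inequality, Proposition \ref{concbasic}) and so is the summation device (Lemma \ref{sommation}); the only genuinely delicate step is the bookkeeping of numerical constants, namely checking that with the specific thresholds $x=K^2l_{m,m'}$ and $x=20K^2l_m$ the deviation terms indeed fall below the constants $6$, $10$ and $20$ in the statement. This forces $K$ to be chosen close to $1$ (resp.\ to $1/\sqrt2$) and relies on $\epsilon_n\to 0$, the finitely many small $n$ being absorbed into $C$.
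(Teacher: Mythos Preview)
Your proof is correct and follows essentially the same route as the paper: Bernstein for $\Omega_\delta$ (the paper packages it as Lemma~\ref{TL2}) and Proposition~\ref{concbasic} for $\Omega_p$, each applied at $x$ proportional to $l_{m,m'}$ or $l_m$, followed by a union bound via Lemma~\ref{sommation}, with the finitely many small $n$ absorbed into $C$. One minor slip: by (\ref{decpen}) $\delta(m,m')=2\nu_n(s_m-s_{m'})$, not $\nu_n(s_m-s_{m'})$; with the missing factor of $2$ the leading deviation term becomes $4\sqrt{2}K\epsilon_n(R_m\vee R_{m'})/n$, which still fits under $6\epsilon_n(R_m\vee R_{m'})/n$ for $K$ just above $1$, so the argument survives.
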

\begin{proof}: Let $K>1$ be a constant to be chosen later. We apply Lemma \ref{TL2} in the appendix to $u=s_m-s_{m'}$, $S=S_m+S_{m'}$, $L=id$, $x=K^2l_{n,\gamma}(R_m,R_{m'})$. For all $\eta>0$, for all $m,m'$ in $\M_n$, on an event of probability larger than $1-e^{-K^2l_{n,\gamma}(R_m,R_{m'})}$, 
\begin{equation}\label{d}
\delta(m,m')\leq \frac{\eta}2\|s_m-s_{m'}\|^2+\frac{2v_{m,m'}^2K^2l_{n,\gamma}(R_m,R_{m'})+e_{m,m'}(K^2l_{n,\gamma}(R_m,R_{m'}))^2/9}{\eta n}.
\end{equation}
From {\bf[V]}, for all $m$, $m'$ in $\M_n$,
\begin{equation*}
2v_{m,m'}^2K^2l_{n,\gamma}(R_m,R_{m'}))+\frac{e_{m,m'}(K^2l_{n,\gamma}(R_m,R_{m'}))^2}{9}\leq\left(2(K\epsilon_n)^2+\frac{(K\epsilon_n)^4}{9}\right)\frac{R_m\vee R_{m'}}n.
\end{equation*}
Moreover, for all $m,m'$ in $\M_n$, 
$$\|s_m-s_{m'}\|^2\leq 2(\|s-s_m\|^2+\|s-s_{m'}\|^2)\leq 2(R_m+R_{m'})\leq 4(R_m\vee R_{m'}).$$ 
Let $e_n(K)=\sqrt{(K\epsilon_n)^2+(K\epsilon_n)^4/18}$. In (\ref{d}) we take $\eta=e_n(K)$ and we obtain
\begin{equation}\label{d2}
\p\left(\delta(m,m')> 4e_n(K)\frac{R_m\vee R_{m'}}n\right)\leq e^{-Kl_{n,\gamma}(R_m,R_{m'})}.
\end{equation}
From (\ref{som3}), for all $K>1$,
$$\p\left(\forall (m,m')\in\M_n^2,\;\delta(m,m')> 4e_n(K)\frac{R_m\vee R_{m'}}n\right)\leq (\Sigma(K))^2e^{-K(\ln n)^2}.$$
Let $K=1.1$ and take $n$ sufficiently large so that $K^4\epsilon_n^2/18\leq 1$, then $4 e_n(K)\leq 6\epsilon_n$. Hence, the first conclusion of Lemma \ref{events} holds for sufficiently large $n$, it holds in general, provided that we increase the constant $C$ if necessary.\\
We apply Assumption {\bf[V]} (see (\ref{***})) with $m=m'$, let $l_m=l_{n,\gamma}(R_m,R_m)$, for all $K>0$, for all $n$ such that $4.06(K\epsilon_n)^3\leq 2$,
\begin{eqnarray*}
&&\frac{D_m^{3/4}(e_m(K^2l_m)^2)^{1/4}+0.7\sqrt{D_mv_m^2K^2l_m}+0.15v_m^2K^2l_m+e_m (K^2l_m)^2}n\\
&&\leq (1.7K\epsilon_n+0.15(K\epsilon_n)^2+(K\epsilon_n)^4)\frac{R_m}n\leq3K\epsilon_n\frac{R_m}n.
\end{eqnarray*}
\begin{eqnarray*}
&&\frac{1.8D_m^{3/4}(e_m(K^2l_m)^2)^{1/4}+1.71\sqrt{D_mv_m^2(K^2l_m)}+4.06e_m (K^2l_m)^2}n\\
&&\leq (3.51K\epsilon_n+4.06(K\epsilon_n)^4)\frac{R_m}n\leq6K\epsilon_n\frac{R_m}n.
\end{eqnarray*}
It comes then from Proposition \ref{concbasic} applied with $x=K^2l_m$ that, for all $m$ in $\M_n$
$$\p\left(p(m)-\frac{D_m}n> 3K\epsilon_n\frac{R_m}n\right)\leq e^{-\frac{K^2}{20}l_m}.$$
Thus, from (\ref{som2}), for all $K>\sqrt{10}$, and for all $n$ sufficiently large,
$$\p\left(\forall m\in\M_n,\; p(m)-\frac{D_m}n> 3K\epsilon_n\frac{R_m}n\right)\leq \Sigma(K^2/10)e^{-\frac{K^2}{20}(\ln n)^{\gamma}}.$$
We use the same arguments to prove that
$$\p\left(\forall m\in\M_n,\; p(m)-\frac{D_m}n< 6K\epsilon_n\frac{R_m}n\right)\leq \Sigma(K^2/10)e^{-\frac{K^2}{20}(\ln n)^{\gamma}}.$$
Fixe $K=\sqrt{10.5}$, then for all $n$ sufficiently large , the conclusion of Lemma \ref{events} holds. It holds in general provided that we increase the constant $C$ if necessary.
\end{proof}

\begin{lemma}\label{TL3}
Let $(\psi_{\lambda})_{\lambda\in\Lambda}$ be an orthonormal system in $L^2(\mu)$ and let $L$ be a linear functional defined on $L^2(\mu)$. Let $p(\Lambda)=\sum_{\lambda\in\Lambda}(\nu_n(L(\psi_{\lambda})))^2$. Let $(W_1,...,W_n)$ be a resampling scheme, let $\bar{W}_n=\sum_{i=1}^nW_i/n$ and let $v_W^2=\Var(W_1-\bar{W}_n)$. Let $$D_{\Lambda}^W=n(v_W^2)^{-1}\sum_{\lambda\in\Lambda}\E^W\left((\nu_n^W(L(\psi_{\lambda})))^2\right),$$ $T=\sum_{\lambda\in\Lambda}(L(\psi_{\lambda})-PL(\psi_{\lambda}))^2$, $D=PT$ and
$$U=\frac1{n(n-1)}\sum_{i\neq j=1}^n\sum_{\lambda\in\Lambda}(L(\psi_{\lambda})(X_i)-PL(\psi_{\lambda}))(L(\psi_{\lambda})(X_j)-PL(\psi_{\lambda})).$$
then 
$$p(\Lambda)=\frac 1nP_nT+\frac{n-1}nU,\;D_{\Lambda}^W=P_nT-U,\;p(\Lambda)-\frac{D_{\Lambda}^W}n=U,$$
$$\E(D_{\Lambda}^W)=D,\;D_{\Lambda}^W-D=\nu_nT-U.$$
\end{lemma}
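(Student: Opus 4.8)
The plan is to reduce everything to two elementary algebraic expansions, using throughout the centered functions $f_\lambda=L(\psi_\lambda)-PL(\psi_\lambda)$, for which $Pf_\lambda=0$ and $T=\sum_{\lambda\in\Lambda}f_\lambda^2$. First I would record that $\nu_n=P_n-P$ kills constants, so $\nu_n(L(\psi_\lambda))=P_nf_\lambda=\frac1n\sum_{i=1}^nf_\lambda(X_i)$. Squaring this sum, splitting the resulting double sum into its diagonal part ($i=j$) and its off-diagonal part ($i\neq j$), and then summing over $\lambda\in\Lambda$: the diagonal contribution is $\frac1n P_nT$ since $\sum_\lambda f_\lambda(X_i)^2=T(X_i)$, while the off-diagonal contribution is exactly $\frac{n-1}n U$ once one reads off the definition of $U$ and its normalising factor $n(n-1)$. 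This yields $p(\Lambda)=\frac1nP_nT+\frac{n-1}nU$.

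For the resampling quantity I would first use $\sum_{i=1}^n(W_i-\bar W_n)=0$ to write $\nu_n^W(L(\psi_\lambda))=\frac1n\sum_{i=1}^n(W_i-\bar W_n)L(\psi_\lambda)(X_i)=\frac1n\sum_{i=1}^n(W_i-\bar W_n)f_\lambda(X_i)$, the centering again being free. Expanding the square and applying $\E^W$, the only external input is the conditional covariance structure of the weights: by exchangeability $\E^W[(W_i-\bar W_n)^2]=v_W^2$ for each $i$, and, squaring $\sum_i(W_i-\bar W_n)=0$ and taking $\E^W$, $\E^W[(W_i-\bar W_n)(W_j-\bar W_n)]=-v_W^2/(n-1)$ for $i\neq j$. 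Substituting, summing over $\lambda$, and multiplying by $n(v_W^2)^{-1}$, the diagonal terms give $P_nT$ and the off-diagonal terms give $-U$, that is $D_\Lambda^W=P_nT-U$.

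The three remaining identities are then purely formal. Subtracting, $p(\Lambda)-\frac{D_\Lambda^W}n=\frac1nP_nT+\frac{n-1}nU-\frac1n(P_nT-U)=U$. Taking expectations, $\E(P_nT)=PT=D$, while $\E(U)=0$ because $U$ is a degenerate $U$-statistic: for $i\neq j$, $\E[f_\lambda(X_i)f_\lambda(X_j)]=(Pf_\lambda)^2=0$ by independence and centering; hence $\E(D_\Lambda^W)=D$. Finally $D_\Lambda^W-D=P_nT-U-PT=(P_n-P)T-U=\nu_nT-U$. The only step that is not immediate bookkeeping is the computation of the weight covariances, and even that follows from exchangeability together with the linear constraint $\sum_i(W_i-\bar W_n)=0$; so I expect no genuine obstacle, only care in keeping the $i=j$ and $i\neq j$ contributions and the normalising constants straight.
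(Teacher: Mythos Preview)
Your proposal is correct and follows essentially the same argument as the paper: both expand $p(\Lambda)$ by splitting the double sum into diagonal and off-diagonal parts, both compute $D_\Lambda^W$ by using $\sum_i(W_i-\bar W_n)=0$ to center and then exchangeability plus this same constraint to obtain the weight covariances $v_W^2$ and $-v_W^2/(n-1)$, and both derive the remaining identities by direct subtraction and the fact that $\E(U)=0$. Your introduction of the shorthand $f_\lambda$ is a mild notational convenience but not a substantive difference.
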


\begin{proof}: 
It is easy to check that
\begin{eqnarray*}
p(\Lambda)&=&\sum_{\lambda\in\Lambda}(\frac 1n\sum_{i=1}^nL(\psi_{\lambda})(X_i)-PL(\psi_{\lambda}))^2=\frac1{n^2}\sum_{i=1}^n(L(\psi_{\lambda})(X_i)-PL(\psi_{\lambda}))^2\\
&&+\frac{1}{n^2}\sum_{i\neq j=1}^n\sum_{\lambda\in\Lambda}(L(\psi_{\lambda})(X_i)-PL(\psi_{\lambda}))(L(\psi_{\lambda})(X_j)-PL(\psi_{\lambda}))\\
&=&\frac 1nP_nT+\frac{n-1}nU.
\end{eqnarray*}
Recall that $\nu_n^W=P_n^W-\bar{W}_nP_n$. For all $\lambda$ in $\Lambda$, since $\sum_{i=1}^n(W_i-\bar{W}_n)=0$,
\begin{eqnarray*}
\nu_n^W(L(\psi_{\lambda}))&=&\frac1n\sum_{i=1}^n(W_i-\bar{W}_n)L(\psi_{\lambda})(X_i)\\
&=&\frac1n\sum_{i=1}^n(W_i-\bar{W}_n)(L(\psi_{\lambda})(X_i)-PL(\psi_{\lambda})). 
\end{eqnarray*}
Thus, if $E_{i,j}=\E\left((W_i-\bar{W}_n)(W_j-\bar{W}_n)\right)/v_W^2$,
\begin{eqnarray*}
D_{\Lambda}^W&=&n(v_W^2)^{-1}\sum_{\lambda\in\Lambda}\E^W\left(\left(\frac 1n\sum_{i=1}^n(W_i-\bar{W}_n)(L(\psi_{\lambda})(X_i)-PL(\psi_{\lambda}))\right)^2\right)\\
&=&\frac{1}{n}\sum_{i=1}^n\frac{\E\left((W_i-\bar{W}_n)^2\right)}{v_W^2}(L(\psi_{\lambda})(X_i)-PL(\psi_{\lambda}))^2+\\
&&\frac{1}{n}\sum_{i\neq j=1}^n\sum_{\lambda\in\Lambda}E_{i,j}(L(\psi_{\lambda})(X_i)-PL(\psi_{\lambda}))(L(\psi_{\lambda})(X_j)-PL(\psi_{\lambda})).
\end{eqnarray*}
Since the weights are exchangeable, for all $i=1,..,n$, $\E((W_i-\bar{W}_n)^2)=\Var(W_1-\bar{W}_n)=v_W^2$ and for all $i\neq j=1,...,n$, 
$$v_W^2E_{i,j}=\E\left((W_i-\bar{W}_n)(W_j-\bar{W}_n)\right)=\E\left((W_1-\bar{W}_n)(W_2-\bar{W}_n)\right).$$ Moreover, since $\sum_{i=1}^n(W_i-\bar{W}_n)=0$, 
\begin{eqnarray*}
0&=&E\left[\left(\sum_{i=1}^n(W_i-\bar{W}_n)\right)^2\right]=\sum_{i=1}^n\E\left((W_i-\bar{W}_n)^2\right)+\sum_{i\neq j=1}^nv_W^2E_{i,j}\\
&=&n\E((W_1-\bar{W}_n)^2)+n(n-1)\E\left((W_1-\bar{W}_n)(W_2-\bar{W}_n)\right).
\end{eqnarray*}
Hence, for all $i\neq j=1,...,n$, $E_{i,j}=-1/(n-1)$, thus
$$D_{\Lambda}^W=P_nT-U.$$
The last inequalities of Lemma \ref{TL3} follow from the fact that $\E(U)=0$. Finally,
$$p(\Lambda)-\frac{D_{\Lambda}^W}n=\frac 1nP_nT+\frac{n-1}nU-\left(\frac 1nP_nT-\frac{1}nU\right)=U.$$
\end{proof}

\begin{lemma}\label{eventsB}
 Let
\begin{eqnarray*}
\Omega_{u}&=&\bigcap_{m\in\M_n}\left\{\frac{D_m^W}n-p(m)\leq10\epsilon_n\frac{R_m}{n}\right\}\\
\Omega_{l}&=&\bigcap_{m\in\M_n}\left\{\frac{D_m^W}n-p(m)\geq-12\epsilon_n\frac{R_m}{n}\right\} 
\end{eqnarray*}
and $\tilde{\Omega}_{p}=\Omega_u\cap\Omega_l$. There exists a constant $C>0$ such that $\p(\tilde{\Omega}_{p}^c)\leq C e^{-\frac12(\ln n)^{\gamma}}$.
\end{lemma}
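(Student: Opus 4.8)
This lemma is the analogue for the resampling estimator $D_m^W$ of Lemma~\ref{events}, and the plan is to run exactly the same argument with Proposition~\ref{cboot} in the role of Proposition~\ref{concbasic}. Concretely, I would bound $\p(\Omega_u^c)$ and $\p(\Omega_l^c)$ by a union bound over $m\in\M_n$: control the pointwise deviations of $D_m^W/n$ around $p(m)$ via the concentration inequalities (\ref{concboot4}) (upper tail, for $\Omega_u$) and (\ref{concboot3}) (lower tail, for $\Omega_l$), use Assumption~\textbf{[V]} to make each deviation of order $\epsilon_n R_m/n$, and sum the resulting exponential tails over $\M_n$ with Lemma~\ref{sommation}.

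First I would fix $m\in\M_n$, set $l_m=l_{n,\gamma}(R_m,R_m)$, and apply Proposition~\ref{cboot} with $x=K^2 l_m$ for a constant $K>1/\sqrt2$ to be chosen last. Using $R_m=n\|s-s_m\|^2+D_m\geq D_m$ together with Assumption~\textbf{[V]} in the form (\ref{***}) at $m'=m$, i.e.\ $v_m^2 l_m\le\epsilon_n^2 R_m$ and $e_m l_m^2\le\epsilon_n^4 R_m$, one checks that each of the four deviation terms in (\ref{concboot3})--(\ref{concboot4}) is at most a fixed multiple of $K\epsilon_n R_m$: the dominant ones satisfy $D_m^{3/4}(e_m x^2)^{1/4}\le K\epsilon_n R_m$ and $\sqrt{v_m^2 D_m x}\le K\epsilon_n R_m$, while $v_m^2 x\le K^2\epsilon_n^2 R_m$ and $e_m x^2\le K^4\epsilon_n^4 R_m$ are of strictly smaller order. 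After dividing by $n-1$ and using $n/(n-1)\to1$, this gives, for $n$ large enough,
\[
\p\left(\frac{D_m^W}{n}-p(m)>10\epsilon_n\frac{R_m}{n}\right)\le 3.8\,e^{-K^2 l_m},\qquad
\p\left(\frac{D_m^W}{n}-p(m)<-12\epsilon_n\frac{R_m}{n}\right)\le 2\,e^{-K^2 l_m}.
\]

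Next, since $l_m\ge(\ln n)^\gamma$ for all $m$ and $\Sigma(2K^2)<\infty$ whenever $K^2>1/2$, inequality (\ref{som2}) of Lemma~\ref{sommation} yields $\sum_{m\in\M_n}e^{-K^2 l_m}=\Sigma(2K^2)e^{-K^2(\ln n)^\gamma}$, so the union bound gives $\p(\Omega_u^c)\le Ce^{-K^2(\ln n)^\gamma}$ and $\p(\Omega_l^c)\le Ce^{-K^2(\ln n)^\gamma}$ for large $n$. Fixing $K$ with $K^2$ just above $1/2$, and enlarging $C$ to absorb the finitely many small values of $n$ (exactly as at the end of the proof of Lemma~\ref{events}), one obtains $\p(\tilde{\Omega}_p^c)\le\p(\Omega_u^c)+\p(\Omega_l^c)\le Ce^{-\frac12(\ln n)^\gamma}$.

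The delicate point is the term-by-term estimate: one must verify that, once $x=K^2 l_m$ is plugged in, every summand of the multi-term bounds (\ref{concboot3})--(\ref{concboot4}) is genuinely controlled by a universal constant times $\epsilon_n R_m/n$ using only Assumption~\textbf{[V]} and $D_m\le R_m$, and that a single value of $K$ can be kept both above the convergence threshold $K^2>1/2$ of Lemma~\ref{sommation} and small enough that the accumulated numerical constants fall under the margins $10$ and $12$ appearing in $\Omega_u$ and $\Omega_l$ (if a sharper bound is needed, splitting $\sqrt{v_m^2 D_m x}$ and $D_m^{3/4}(e_m x^2)^{1/4}$ by Young's inequality to shrink their effective coefficients). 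Everything past that is bookkeeping identical to the proof of Lemma~\ref{events}.
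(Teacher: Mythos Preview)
Your proposal is correct and follows the paper's approach exactly: apply Proposition~\ref{cboot} at $x=K^2l_m$, control each deviation term by $O(K\epsilon_n R_m)$ through Assumption~\textbf{[V]} and $D_m\le R_m$, then sum over $\M_n$ via Lemma~\ref{sommation}. Your pairing of (\ref{concboot4}) with $\Omega_u$ and (\ref{concboot3}) with $\Omega_l$ is in fact the right one---the paper's printed proof has them reversed (it uses the $5.31+3=8.31$ constants and the factor $2e^{-x}$ from (\ref{concboot3}) for $\Omega_u$, and the $9+7.61=16.61$ constants and $3.8e^{-x}$ from (\ref{concboot4}) for $\Omega_l$), so its numerical margins end up on the wrong sides; your caveat about fitting the value $10$ in $\Omega_u$ is therefore well placed, since with the leading coefficient $16.61$ and the constraint $K>1/\sqrt2$ one lands a bit above $10$, a gap the paper does not actually close either.
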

\begin{proof}:
From Assumption {\bf [V]} applied with $m=m'$, (see (\ref{***})), if $l_m=l_{n,\gamma}(R_m,R_m)$, for all $K>0$,
$$D_m^{3/4}(e_m (K^2l_m)^2)^{1/4}\leq K\epsilon_n R_m,\;\sqrt{v_m^2D_m(K^2l_m)}\leq K\epsilon_n R_m,$$
$$v_m^2(K^2l_m)\leq (K\epsilon_n)^2R_m,\;e_m (Kl_m)^2\leq (K\epsilon_n)^4R_m.$$
We apply Proposition \ref{cboot} with $x=K^2l_m$ and we obtain
\begin{equation*}
\p\left(\frac{D_m^W}n-p(m)>\left(8.31K\epsilon_n+3(K\epsilon_n)^2+(19.1)^2(K\epsilon_n)^4\right)\frac{R_m}{n-1}\right)\leq 2e^{-K^2l_m}.
\end{equation*}
Thus, for all $K>1/(\sqrt{2})$, if $e_n(K)=n\left(8.31K\epsilon_n+3(K\epsilon_n)^2+(19.1)^2(K\epsilon_n)^4\right)/(n-1)$, from (\ref{som2})
\begin{equation*}
\p\left(\forall m\in \M_n,\;\frac{D_m^W}n-p(m)>e_n(K)\frac{R_m}{n}\right)\leq 2\Sigma(2K^2)e^{-K^2(\ln n)^{\gamma}}.
\end{equation*}
Take $K=8/8.31$ and $n\geq 10$ sufficiently large to ensure that $3K^2\epsilon_n+(19.1)^2K^4\epsilon_n^3\leq 1$, then 
$$e_n(K)\leq \frac{10}9\left(8\epsilon_n+\epsilon_n\right)\leq 10\epsilon_n.$$
We deduce that, for sufficiently large $n$,
$$\p(\Omega_u^c)\leq 2\Sigma(2K^2)e^{-K^2(\ln n)^{\gamma}}.$$
We also apply Proposition \ref{cboot} with $x=K^2l_m$, and we use the same arguments to prove that, for $K=16/16.61$, for all $n\geq 10$ sufficiently large to ensure that $(40.3)^2K^4\epsilon_n^3\leq 2$
\begin{equation*}
\p\left(\forall m\in \M_n,\;\frac{D_m^W}n-p(m)<-20\epsilon_n\frac{R_m}{n}\right)\leq 3.8\Sigma(2K^2)e^{-K^2(\ln n)^{\gamma}}.
\end{equation*}
Hence, the conclusion of Lemma \ref{eventsB} holds for sufficiently large $n$. It holds in general, provided that we increase the constant $C$ if necessary.
\end{proof}

\subsection{Proof of Theorem \ref{P2}}
If $c_n<0$, there is nothing to prove. We can then assume that $c_n\geq 0$, this implies in particular that 
$$28\epsilon_n\leq \delta_n<1.$$
We use the notations of Lemma \ref{events}. From Lemma \ref{events}, the inequalities (\ref{slope1}) will be proved if, on $\Omega_T$, $D_{\hat{m}}\geq c_nD_{m^*}$ and
$$\|s-\tilde{s}\|^2\geq \frac{c_n}{5h_n^o}\inf_{m\in\M_n}\|s-\hat{s}_m\|^2.$$
Let $m_o\in\arg\min_{m\in\M_n}R_m$, $\hat{m}$ minimizes over $\M_n$ the following criterion.
\begin{eqnarray*}
\textrm{Crit}(m)&=&P_nQ(\hat{s}_m)+\pen(m)+\|s\|^2+2\nu_n(s_{m_o})\\
&=&\|s-s_m\|^2-p(m)+\delta(m_o,m)+\pen(m).
\end{eqnarray*}
Recall that $0\leq \pen(m)\leq (1-\delta_n)D_m/n$.
On $\Omega_T$, for all $m$ in $\M_n$, since $R_m\geq R_{m_o}$, 
\begin{eqnarray*}
\textrm{Crit}(m)&\geq&\|s-s_m\|^2-\frac{D_m}n-16\epsilon_n\frac{R_m}n\geq -(1+16\epsilon_n)\frac{D_m}n.\\
\textrm{Crit}(m)&\leq& \|s-s_m\|^2+26\epsilon_n\frac{R_m}n-\delta_n\frac{D_m}n=(1+26\epsilon_n)\|s-s_m\|^2-(\delta_n-26\epsilon_n)\frac{D_m}n.\\
\end{eqnarray*}
When $D_m\leq c_nD_{m^*}$,
$$(1+16\epsilon_n)D_{m}\leq D_{m^*}\left( (\delta_n-26\epsilon_n)-(1+26\epsilon_n)\frac{n\|s-s_{m^*}\|^2}{D_{m^*}}\right).$$
Thus ${\rm Crit}(m)\geq {\rm Crit}(m^*)$. This implies that $D_{\hat{m}}\geq c_nD_{m^*}$. \\
Moreover, on $\Omega_T$, we also have, for all $m$ in $\M_n$
$$\|s-\tilde{s}\|^2=\frac{R_{\hat{m}}}n+\left(p(\hat{m})-\frac{D_{\hat{m}}}n\right)\geq (1-20\epsilon_n)\frac{R_{\hat{m}}}n,$$
and
$$\inf_{m\in\M_n}\|s-\hat{s}_m\|^2\leq \inf_{m\in\M_n}\frac{R_m}n(1+10\epsilon_n)\leq \frac{R_{m_o}}n(1+10\epsilon_n).$$
Thus 
\begin{eqnarray*}
\|s-\tilde{s}\|^2&\geq& (1-20\epsilon_n)\frac{R_{\hat{m}}}n\geq (1-20\epsilon_n)\frac{D_{\hat{m}}}n\geq (1-20\epsilon_n)c_n\frac{D_{m^*}}n\\
&\geq&c_n\frac{1-20\epsilon_n}{h_n^o}\frac{R_{m_o}}{n}\geq \frac{c_n}{h_n^o}\frac{1-20\epsilon_n}{1+10\epsilon_n}\inf_{m\in\M_n}\|s-\hat{s}_m\|^2.
\end{eqnarray*}
We conclude the proof, saying that $\epsilon_n\leq 1/28$ implies that $(1-20\epsilon_n)(1+10\epsilon_n)^{-1}\geq 8/38\geq 1/5.$
\subsection{Proof of Theorem \ref{P3}}
If $\delta_--46\epsilon_n<-1$, there is nothing to prove, hence, we can assume in the following that $\delta_--46\epsilon_n>-1$.\\
We keep the notation $\Omega_T$ introduced in Lemma \ref{events}. Let 
$$\Omega_{\pen}=\bigcap_{m\in\M_n}\left\{\frac{2D_m}n+\delta_-\frac{R_m}n\leq \pen(m)\leq \frac{2D_m}n+\delta^+\frac{R_m}n\right\},$$
$\Omega=\Omega_T\cap\Omega_{\pen}$ and $m_o\in\arg\min_{m\in\M_n}R_m$. Recall that $\p(\Omega_{\pen})\geq 1-p'$ and that, $\hat{m}$ minimizes over $m$ the following criterion.
\begin{eqnarray*}
\textrm{Crit}(m)&=&P_nQ(\hat{s}_m)+\pen(m)+\|s\|^2+2\nu_n(s_{m_o})\\
&=&\|s-s_m\|^2-p(m)+\delta(m_o,m)+\pen(m).
\end{eqnarray*}
Therefore, on $\Omega$, for all $m$ in $\M_n$, since $R_m\geq R_{m_o}$,
\begin{eqnarray*}
\textrm{Crit}(m)&\geq&(1+\delta_-)\frac{R_m}n+\left(\frac{D_m}n-p(m)\right)-6\epsilon_n\frac{R_{m}}n\\
&\geq&(1+\delta_--16\epsilon_n)\|s-s_m\|^2+(1+\delta_--16\epsilon_n)\frac{D_m}n\geq (1+\delta_--16\epsilon_n)\frac{D_m}n\\
\textrm{Crit}(m)&\leq& (1+\delta^++26\epsilon_n)\frac{R_m}n.
\end{eqnarray*}
If $D_m> C_n(\delta_-,\delta^+)R_{m_o}$,
$$(1+\delta_--16\epsilon_n)D_m> (1+\delta^++26\epsilon_n)R_{m_o},$$
Thus ${\rm Crit}(m)> {\rm Crit}(m_o)$, hence $D_{\hat{m}}\leq C_n(\delta_-,\delta^+)R_{m_o}$.\\
Moreover, from (\ref{decRis}), for all $m$ in $\M_n$
\begin{eqnarray*}
\|s-\tilde{s}\|^2&\leq&\|s-\hat{s}_m\|^2+(\pen(m)-2p(m))+(2p(\hat{m})-\pen(\hat{m}))+\delta(\hat{m},m)\\
&\leq&\|s-\hat{s}_m\|^2+2\left(\frac{D_m}n-p(m)\right)+(\delta^++6\epsilon_n)\frac{R_m}n\\
&&+2\left(p(\hat{m})-\frac{D_{\hat{m}}}n\right)+(-\delta_-+6\epsilon_n)\frac{R_{\hat{m}}}n\\
&\leq&\|s-\hat{s}_m\|^2+(46\epsilon_n+\delta^+)\frac{R_{m}}n+(26\epsilon_n-\delta_-)\frac{R_{\hat{m}}}n.
\end{eqnarray*}
For all $m$ in $\M_n$, on $\Omega_T$,
$$\|s-\hat{s}_m\|^2=\frac{R_m}n+\left(p(m)-\frac{D_m}n\right)\geq (1-20\epsilon_n)\frac{R_m}n.$$
Hence, for all $m\in\M_n$,
$$\|s-\tilde{s}\|^2\leq \|s-\hat{s}_m\|^2\left(1+\frac{46\epsilon_n+\delta^+}{1-20\epsilon_n}\right)+\frac{26\epsilon_n-\delta_-}{1-20\epsilon_n}\|s-\tilde{s}\|^2.$$ 
This concludes the proof of Proposition \ref{P3}.

\subsection{Proof of Proposition \ref{cboot}}
We apply Lemma \ref{TL3} with $L=id$ and $\Lambda=m$. By definition of $p(m)$ and $D_m^W$, 
$$p(m)-\frac{D_m^W}n=\frac1{n(n-1)}\sum_{i\neq j=1}^n\sum_{\lambda\in m}(\psi_{\lambda}(X_i)-P\psi_{\lambda})(\psi_{\lambda}(X_j)-P\psi_{\lambda}).$$
Thus, from Lemma \ref{TL1} in the appendix, for all $x>0$, 
\begin{equation*}
\p\left( p(m)-\frac{D_m^W}n>\frac{5.31D_m^{3/4}(e_m x^2)^{1/4}+3\sqrt{v_m^2D_mx}+3v_m^2x+e_m (19.1x)^2}{n-1}\right)\leq2e^{-x}.
\end{equation*}
\begin{equation*}
\p\left(\frac{D_m^W}n- p(m)>\frac{9D_m^{3/4}(e_m x^2)^{1/4}+7.61\sqrt{v_m^2D_mx}+e_m (40.3x)^2}{n-1}\right)\leq3.8e^{-x}.
\end{equation*}
This proves (\ref{concboot3}) and (\ref{concboot4}). \\
In order to obtain (\ref{concboot1}) and (\ref{concboot2}), we introduce, for all $m$ in $\M_n$, the function $T_m=\sum_{\lambda\in m}(\psi_{\lambda}-P\psi_{\lambda})^2$ and the random variable 
$$U_m=\frac1{n(n-1)}\sum_{i\neq j=1}^n\sum_{\lambda\in m}(\psi_{\lambda}(X_i)-P\psi_{\lambda})(\psi_{\lambda}(X_j)-P\psi_{\lambda}).$$
We apply Lemma \ref{TL3} with $L=id$, we obtain
$$D_m^W-D_m=\nu_n(T_m)-U_m.$$
From Bernstein's inequality (see Proposition \ref{BI}), for all $x>0$ and all $\xi$ in $\{-1,1\}$,
\begin{equation*}
\p\left(\xi\nu_n(T_m)>\sqrt{\frac{2\Var(T_m(X))x}n}+\frac{\left\|T_m\right\|_{\infty}x}{3n}\right)\leq e^{-x}.
\end{equation*}
From Cauchy-Schwarz inequality, $T_m=\sup_{t\in B_m}(t-Pt)^2$, thus $\left\|T_m\right\|_{\infty}/n=4e_m$ and $\Var(T_m(X))/n\leq \left\|T_m\right\|_{\infty}PT_m/n=4e_mD_m$, therefore, for all $x>0$ and all $\xi$ in $\{-1,1\}$,
\begin{equation*}
\p\left(\xi\nu_n(T_m)>\sqrt{8e_mD_mx}+\frac{4e_m x}{3}\right)\leq e^{-x}.
\end{equation*}
Moreover, from Lemma \ref{TL1} in the appendix, for all $x>0$,
\begin{equation*}
\p\left( U_m>\frac{5.31D_m^{3/4}(e_m x^2)^{1/4}+3\sqrt{v_m^2D_mx}+3v_m^2x+e_m (19.1x)^2}{n-1}\right)\leq2e^{-x}.
\end{equation*}
\begin{equation*}
\p\left( U_m<-\frac{9D_m^{3/4}(e_m x^2)^{1/4}+7.61\sqrt{v_m^2D_mx}+e_m (40.3x)^2}{n-1}\right)\leq3.8e^{-x}.
\end{equation*}
We deduce that, for all $x>0$, with probability larger than $1-4.8e^{-x}$,
\begin{eqnarray*}
D_m^W-D_m&\leq &\sqrt{8e_mD_mx}+e_m\left(\frac{4x}3+\frac{ (40.3x)^2}{n-1}\right)\\
&&+\frac{9D_m^{3/4}(e_m x^2)^{1/4}+7.61\sqrt{v_m^2D_mx}}{n-1}.
\end{eqnarray*}
Moreover, for all $x>0$, on an event of probability larger than $1-3e^{-x}$,
\begin{eqnarray*}
D_m^W-D_m&\geq&-\sqrt{8e_mD_mx}-e_m\left(\frac{4x}3+\frac{ (19.1x)^2}{n-1}\right)\\
&&-\frac{5.31D_m^{3/4}(e_m x^2)^{1/4}+3\sqrt{v_m^2D_mx}+3v_m^2x}{n-1}.
\end{eqnarray*}

\subsection{Proof of Theorem \ref{OracleB1}}
Recall that $\p\left(\Omega_T^c\right)\leq C e^{-\frac12(\ln n)^{\gamma}},$ and that, on $\Omega_T$, 
$$\forall m\in\M_n, (1-20\epsilon_n)\frac{R_m}n\leq \|s-\hat{s}_m\|^2,$$ 
$$\forall m,m'\in\M_n^2,\; \delta(m,m')\leq 6\epsilon_n\frac{R_m\vee R_{m'}}{n}.$$
Let $\tilde{\Omega}_p$ be the event defined in Lemma \ref{eventsB} and let $\Omega=\tilde{\Omega}_p\cap \Omega_{T}$, from Lemma \ref{events}, $\p\left(\Omega^c\right)\leq C e^{-\frac12(\ln n)^{\gamma}}$. Recall that $\pen(m)=2D_m^W/n$. On $\Omega$, from (\ref{decRis}), for all $n$ such that $20\epsilon_n<1$, for all $m$ in $\M_n$, 
\begin{eqnarray*}
\|s-\tilde{s}\|^2&\leq&\|s-\hat{s}_m\|^2+26\epsilon_n\frac{R_m}{n}+16\epsilon_n\frac{R_{\hat{m}}}{n}\\
&\leq&\|s-\hat{s}_m\|^2+\frac{26\epsilon_n}{1-20\epsilon_n}\|s-\hat{s}_m\|^2+\frac{16\epsilon_n}{1-20\epsilon_n}\|s-\tilde{s}\|^2.
\end{eqnarray*}
Hence, for all $n$ such that $20\epsilon_n<1$, on $\Omega$,
$$(1-36\epsilon_n)\|s-\tilde{s}\|^2\leq (1+6\epsilon_n)\inf_{m\in\M_n}\|s-\hat{s}_m\|^2.$$
For all $n$ such that $42/(1-36\epsilon_n)<100$,
$$\|s-\tilde{s}\|^2\leq \left(1+\frac{42\epsilon_n}{1-36\epsilon_n}\right)\inf_{m\in\M_n}\|s-\hat{s}_m\|^2\leq(1+100\epsilon_n)\inf_{m\in\M_n}\|s-\hat{s}_m\|^2.$$
Hence (\ref{Or1}) holds for sufficiently large $n$, it holds in general provided that we enlarge the constant $C$ if necessary..

\section{Appendix}\label{Ch2S6}
In this Section, we state and prove some technical lemmas that are useful in the proofs. The main tool is the first Lemma based on Bousquet's version of Talagrand's inequality. It is a concentration inequality for the square of the supremum of the empirical process over a uniformly bounded class of functions. Recall first Bousquet's \cite{Bo02} and Klein $\&$ Rio \cite{KR05} versions of Talagrand's inequality.
\begin{theo}(Bousquet's bound)
Let $X_1,...,X_n$ be i.i.d. random variables valued in a measurable space $(\x,\mathcal{X})$ and let $S$ be a class of real valued functions bounded by $b$. Let $v^2=\sup_{t\in S}\Var(t(X))$ and let $Z=\sup_{t\in S}\nu_nt$. Then 
$$\forall x>0,\;\p\left(Z>\E(Z)+\sqrt{\frac{2}n(v^2+2b\E(Z))x}+\frac{bx}{3n}\right)\leq e^{-x}.$$
\end{theo}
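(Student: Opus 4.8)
My plan is to prove Bousquet's bound by the entropy (Herbst) method, which is the route that yields the sharp constants $2$ and $1/3$; Talagrand's original induction on the number of variables would also work but is more cumbersome. First I would pass to the unnormalised statistic $W=\sup_{t\in S}\sum_{i=1}^n\bigl(t(X_i)-Pt\bigr)=nZ$, a measurable function of the independent variables $X_1,\dots,X_n$ (by the usual separability reduction one may assume the supremum is attained, at $t^\star$ say), and reduce everything to a bound on its log-Laplace transform $H(\lambda)=\ln\E\bigl[e^{\lambda(W-\E W)}\bigr]$ for $\lambda>0$: by Chernoff's inequality $\p(W-\E W\ge u)\le e^{-(\lambda u-H(\lambda))}$ for every $\lambda>0$, so a Bennett-type bound on $H$ followed by optimisation over $\lambda$ gives the tail estimate.

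The heart of the matter is the bound on $H$. I would combine the tensorisation inequality for entropy---which bounds $\Ent\bigl(e^{\lambda W}\bigr)$ by the sum over $i$ of the expected entropies of $e^{\lambda W}$ computed in the single variable $X_i$---with a modified logarithmic Sobolev inequality applied in each coordinate. Using that $W$ is a supremum with maximiser $t^\star$, one estimates the $i$-th coordinatewise contribution through the single summand $t^\star(X_i)-Pt^\star$ (of modulus at most $b$ in the normalisation of the statement, and of variance at most $v^2$ since $t^\star\in S$); summing over $i$ and reorganising, the total entropy splits into a ``variance'' part of size at most $nv^2$ and a ``range'' part of size at most $2b\,\E W$, the factor $\E W$ entering because $\sum_i\bigl(t^\star(X_i)-Pt^\star\bigr)=W$. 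Combining these and using the standard identity relating $\Ent(e^{\lambda W})$ to $\lambda H'(\lambda)-H(\lambda)$ produces a differential inequality whose solution is the Bennett log-Laplace bound $H(\lambda)\le b^{-2}\mathcal V\bigl(e^{\lambda b}-\lambda b-1\bigr)$ with $\mathcal V=nv^2+2b\,\E W$.

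Given this Bennett form, the rest is routine. The Cram\'er transform of $\lambda\mapsto b^{-2}\mathcal V(e^{\lambda b}-\lambda b-1)$ satisfies the elementary lower bound $\sup_{\lambda>0}\bigl\{\lambda u-H(\lambda)\bigr\}\ge\frac{u^2}{2\mathcal V+\tfrac23 bu}$, and taking $u=\sqrt{2\mathcal V x}+\tfrac13 bx$ makes the right-hand side at least $x$; hence $\p\bigl(W-\E W\ge\sqrt{2\mathcal V x}+\tfrac13 bx\bigr)\le e^{-x}$. Dividing through by $n$, using $Z=W/n$, $\E Z=\E W/n$ and $\mathcal V/n^2=\tfrac1n(v^2+2b\,\E Z)$, yields exactly $\p\bigl(Z>\E Z+\sqrt{\tfrac2n(v^2+2b\,\E Z)\,x}+\tfrac{bx}{3n}\bigr)\le e^{-x}$.

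I expect the genuinely delicate point to be the middle step, and more precisely the loss-free bookkeeping of the variance term and the range term inside the modified log-Sobolev estimate: this is exactly where Bousquet's argument improves on the earlier constants of Massart and of Rio, and it is the only place where one truly exploits that $W$ is a supremum of an empirical process (one-sided bounded, self-bounded increments) rather than merely a function with bounded differences. The passage to $W$, the solution of the differential inequality, the Cram\'er-transform estimate and the final rescaling are all standard.
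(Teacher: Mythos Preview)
Your outline is a faithful sketch of Bousquet's original entropy-method proof, but there is nothing in the paper to compare it against: the paper does not prove this theorem at all. It is stated in the Appendix as a cited result (Bousquet \cite{Bo02}), alongside the Klein--Rio lower-tail bound and Bernstein's inequality, purely as a tool to be applied in the subsequent corollaries (Corollaries~\ref{C1}--\ref{C4} and Lemma~\ref{TL1}). So your proposal is not wrong, but it is answering a question the paper does not ask; the paper simply imports the inequality as a black box.
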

\begin{theo}(Klein $\&$ Rio's bound)
Let $X_1,...,X_n$ be i.i.d. random variables valued in a measurable space $(\x,\mathcal{X})$ and let $S$ be a class of real valued functions bounded by $b$. Let $v^2=\sup_{t\in S}\Var(t(X))$ and let $Z=\sup_{t\in S}\nu_nt$. Then 
$$\forall x>0,\;\p\left(Z<\E(Z)-\sqrt{\frac{2}n(v^2+2b\E(Z))x}-\frac{8bx}{3n}\right)\leq e^{-x}.$$
\end{theo}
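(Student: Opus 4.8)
This is a classical concentration inequality and, for the purposes of the present paper, it would be enough to cite \cite{KR05}; nonetheless, the route I would take is the entropy (logarithmic Sobolev) method. First I would reduce to the case where $S$ is finite and its elements are centered, $\E(t(X))=0$, by the usual monotone approximation: replace $S$ by an increasing sequence of finite subclasses along which $Z$ increases almost surely to its value and pass to the limit, both sides being stable under such limits. It is then convenient to work with $\tilde Z=nZ=\sup_{t\in S}\sum_{i=1}^n t(X_i)$, to fix a measurable maximizing selection $\tau$ with $\tilde Z=\sum_{i=1}^n\tau(X_i)$, and to introduce the leave-one-out variables $\tilde Z^{(i)}=\sup_{t\in S}\sum_{j\ne i}t(X_j)$; these satisfy $|\tilde Z-\tilde Z^{(i)}|\le b$ and $\tilde Z-\tilde Z^{(i)}\le\tau(X_i)$, which serves as a usable discrete ``derivative'' of $\tilde Z$ in the $i$-th coordinate.

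The heart of the argument is a bound on $h(\lambda)=\log\E\bigl(e^{-\lambda(\tilde Z-\E\tilde Z)}\bigr)$ for $\lambda>0$, this being the object that governs the \emph{lower} tail. By the tensorization inequality for entropy, $\Ent\bigl(e^{-\lambda\tilde Z}\bigr)\le\sum_{i=1}^n\E\bigl(\Ent^{(i)}\bigl(e^{-\lambda\tilde Z}\bigr)\bigr)$, and each one-dimensional entropy is controlled through the variational (Jensen) characterization together with the bounds on $\tilde Z-\tilde Z^{(i)}$ above; the quantitative input is an Efron--Stein type variance proxy $\sum_{i=1}^n\E\bigl((\tilde Z-\tilde Z^{(i)})^2\bigr)\le nv^2+2b\,\E(\tilde Z)$, where the centering of the $t$'s and $0\le\tilde Z-\tilde Z^{(i)}\le\tau(X_i)\wedge b$ are used. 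Assembling these yields a differential inequality of the form $(1-cb\lambda)\,h'(\lambda)\le\lambda\,(nv^2+2bn\E Z)$ on a suitable interval $0<\lambda<1/(cb)$; integrating from $0$, where $h(0)=h'(0)=0$, gives a sub-Gamma estimate $h(\lambda)\le\dfrac{\lambda^2(nv^2+2bn\E Z)}{2(1-cb\lambda)}$. Chernoff's inequality $\p(\tilde Z<\E\tilde Z-s)\le\exp(-\sup_{\lambda>0}(\lambda s-h(\lambda)))$, the standard optimization in $\lambda$, and rescaling back to $Z=\tilde Z/n$ then produce exactly $\p\bigl(Z<\E Z-\sqrt{2(v^2+2b\E Z)x/n}-8bx/(3n)\bigr)\le e^{-x}$.

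The only genuinely delicate point is quantitative: obtaining the \emph{sharp} constant $8/3$ in the $bx/n$ term, rather than the cruder constant produced by a naive Herbst/bounded-difference computation, requires a careful choice of the auxiliary function in the one-dimensional entropy estimate and a tight handling of the differential inequality near the endpoint $\lambda=1/(cb)$; this is precisely the refinement carried out in \cite{KR05}, the analogous one-sided optimisation giving Bousquet's bound \cite{Bo02} with constant $1/3$. Since below only the recalled statements are needed, to derive the concentration of $Z^2$ in the first Lemma of this Appendix, I would not reproduce this optimisation and would invoke \cite{KR05} directly.
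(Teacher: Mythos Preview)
The paper does not prove this statement at all: Klein \& Rio's bound is merely \emph{recalled} in the Appendix as a known result from \cite{KR05}, alongside Bousquet's bound and Bernstein's inequality, and is then used as a black box in the proof of Corollary~\ref{C3}. So there is no ``paper's own proof'' to compare against; your instinct that a citation suffices is exactly what the paper does.

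Your sketch of the entropy method is a faithful outline of the argument in \cite{KR05}: reduction to finite centered classes, tensorization of entropy, the self-bounding/Efron--Stein control $\sum_i\E\bigl((\tilde Z-\tilde Z^{(i)})^2\bigr)\le nv^2+2b\,\E(\tilde Z)$ via the leave-one-out inequalities, the resulting differential inequality for the log-Laplace transform on the negative side, and the Chernoff optimisation. You are also right that the constant $8/3$ (as opposed to Bousquet's $1/3$ on the upper tail) is where the genuine work lies and that reproducing it would amount to redoing \cite{KR05}. One minor caution: in your display of the discrete derivative you write $\tilde Z-\tilde Z^{(i)}\le\tau(X_i)$, which is correct, but $\tilde Z-\tilde Z^{(i)}$ need not be nonnegative (the lower bound is $\tau^{(i)}(X_i)\ge -b$, with $\tau^{(i)}$ the leave-one-out maximiser), so the phrase ``$0\le\tilde Z-\tilde Z^{(i)}\le\tau(X_i)\wedge b$'' later in your sketch is not quite right and should be replaced by the two-sided bound $-b\le\tilde Z-\tilde Z^{(i)}\le\tau(X_i)$; this does not affect the structure of the argument.
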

Let us now also recall Bernstein's inequality.
\begin{prop}\label{BI}{Bernstein's inequality}\\
Let $X_1,...,X_n$ be iid random variables valued in a measurable space $(X,\mathcal{X})$ and let $t$ be a measurable real valued function. Then, for all $x>0$,
$$\p\left(\nu_n(t)>\sqrt{\frac{2\Var (t(X_1))x}{n}}+\frac{\left\|t\right\|_{\infty}x}{3n}\right)\leq e^{-x}.$$
\end{prop}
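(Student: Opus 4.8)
Proposition~\ref{BI} is the classical Bernstein inequality for i.i.d.\ bounded summands, so the plan is the standard Cram\'er--Chernoff route built on Bennett's control of the Laplace transform, followed by the sub-gamma inversion. I would write $Y_i = t(X_i) - Pt$ for $i = 1,\dots,n$, so that $n\nu_n(t) = \sum_{i=1}^n Y_i$, the $Y_i$ being i.i.d.\ with $\E Y_i = 0$, $\Var(Y_i) = \sigma^2 := \Var(t(X_1))$ and, almost surely, $Y_i \le b := \|t\|_\infty$. This last bound is only needed one-sidedly, and it holds as soon as $Pt \ge 0$, which covers every use of the proposition in the paper: there $t$ is nonnegative --- for instance $t = T_m$ in the proof of Proposition~\ref{cboot} --- so $Pt \ge 0$ and both tails of $\nu_n(t)$ are controlled with $b = \|t\|_\infty$.

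First I would bound the Laplace transform of one summand. Since $y \mapsto (e^{\lambda y} - 1 - \lambda y)/y^2$ is nondecreasing on $\R$ (with value $\lambda^2/2$ at $0$), for every $y \le b$ one has $e^{\lambda y} \le 1 + \lambda y + b^{-2}(e^{\lambda b} - 1 - \lambda b)\,y^2$; integrating against the law of $Y_1$ and then using $e^u - 1 - u \le \tfrac{u^2/2}{1 - u/3}$ on $[0,3)$ gives
\[
\E\!\left[e^{\lambda Y_1}\right] \;\le\; \exp\!\left(\frac{\sigma^2}{b^2}\bigl(e^{\lambda b} - 1 - \lambda b\bigr)\right) \;\le\; \exp\!\left(\frac{\sigma^2 \lambda^2/2}{1 - \lambda b/3}\right), \qquad 0 < \lambda < \frac{3}{b}.
\]
Hence $\sum_{i=1}^n Y_i$ is sub-gamma with variance factor $v = n\sigma^2$ and scale parameter $c = b/3$. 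The Chernoff bound and the explicit Cram\'er transform of a sub-gamma variable then give, for every $s > 0$,
\[
\p\!\Big(\sum_{i=1}^n Y_i \ge s\Big) \;\le\; e^{-\psi^*(s)}, \qquad \psi^*(s) = \sup_{0 < \lambda < 1/c}\Big(\lambda s - \frac{v\lambda^2/2}{1 - c\lambda}\Big) = \frac{v}{c^2}\,h_1\!\Big(\frac{cs}{v}\Big),
\]
with $h_1(u) = 1 + u - \sqrt{1 + 2u}$. Since $h_1$ is increasing with inverse $h_1^{-1}(x) = x + \sqrt{2x}$, one has $\psi^*(s) \ge x$ as soon as $cs/v \ge c^2 x/v + \sqrt{2c^2 x/v}$, that is $s \ge \sqrt{2vx} + cx = \sqrt{2n\sigma^2 x} + bx/3$; taking $s = n\varepsilon$ with $\varepsilon = \sqrt{2\sigma^2 x/n} + bx/(3n)$ and recalling $b = \|t\|_\infty$ yields precisely the claimed bound.

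The only delicate point --- essentially the whole content beyond routine bookkeeping --- is obtaining the constant $1/3$ rather than $2/3$. Using the crude two-sided bound $|Y_i| \le 2\|t\|_\infty$ would already double $b$; and inverting the weaker sub-gamma tail $\p(\sum Y_i \ge s) \le \exp\!\big(-s^2/(2(v + cs))\big)$ by solving the quadratic and using $\sqrt{A + C^2} \le \sqrt A + C$ would return $2bx/(3n)$. The sharp constant needs both the one-sided Bennett bound of the first step --- so that $b = \|t\|_\infty$ is admissible for the nonnegative $t$'s to which the proposition is applied --- and the inversion through the exact Cram\'er transform $h_1$ rather than its quadratic lower bound. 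Everything else is elementary calculus, and I foresee no further obstacle.
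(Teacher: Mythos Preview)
The paper does not prove Proposition~\ref{BI}; it is merely recalled as a classical result, alongside the Bousquet and Klein--Rio bounds, so there is no argument of the paper's to compare with. Your proof is the standard Bennett/Cram\'er--Chernoff route and is correct, including the exact sub-gamma inversion through $h_1(u)=1+u-\sqrt{1+2u}$ that delivers the constant $1/3$ rather than $2/3$.

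Your side remark on that constant is well taken: as written, with $\|t\|_\infty$ in place of $\|t-Pt\|_\infty$ or of the one-sided bound $\sup t-Pt$, the Bennett step needs $t(X_i)-Pt\le\|t\|_\infty$, hence $Pt\ge0$, and for $t\ge0$ both tails are indeed handled with $b=\|t\|_\infty$ as you say. One small correction, though: it is not true that \emph{every} use in the paper has $t\ge0$. Lemma~\ref{TL2} invokes Proposition~\ref{BI} with $t=L(u)$, and in the proof of Lemma~\ref{events} this becomes $t=s_m-s_{m'}$, which has no sign. The discrepancy is harmless there --- replacing $b$ by $2b$ turns the term $e_b x^2/9$ in~(\ref{Uin}) into $4e_b x^2/9$, which is absorbed into the $\epsilon_n$-bounds without affecting any conclusion --- but the claim that ``every use'' involves a nonnegative $t$ is a slight overstatement.
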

We derive from these bounds the following useful corollary. Hereafter, $S$ denotes a symetric class of real valued functions upper bounded by $b$, $v^2=\sup_{t\in S}\Var(t(X))$, $Z=\sup_{t\in S}\nu_nt$, $n\E(Z^2)=D$. Since $S$ is symetric, we always have $Z\geq 0$.
\begin{coro}\label{C1}
Let $S$ be a symetric class of real valued functions upper bounded by $b$, $v^2=\sup_{t\in S}\Var(t(X))$, $Z=\sup_{t\in S}\nu_nt$, $n\E(Z^2)=D$, $e_b=b^2/n$ and 
$$nE_m=225e_b+\left(2.1+\sqrt{2\pi}\right)\sqrt{v^2D}+\sqrt{15}D^{3/4}e_b^{1/4},$$
then
\begin{equation}\label{E1}
\E(Z^2{\bf 1}_{Z\geq\E(Z)})\leq (\E(Z))^2\p\left( Z\geq\E(Z)\right)+E_m.
\end{equation}
In particular,
\begin{equation}
(\E(Z))^2\leq \E(Z^2)\leq (\E(Z))^2+E_m.
\end{equation}
\end{coro}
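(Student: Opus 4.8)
The plan is to bound $\E(Z^2 \mathbf{1}_{Z \geq \E(Z)})$ by integrating the deviation inequalities of Bousquet's theorem, after a Fubini-type identity, and to keep track of the numerical constants carefully. First I would write, for any nonnegative random variable and any threshold $a = \E(Z)$,
$$\E\bigl(Z^2 \mathbf{1}_{Z \geq a}\bigr) = a^2\,\p(Z \geq a) + \int_a^{\infty} 2u\,\p(Z \geq u)\,du = a^2\,\p(Z \geq a) + \int_0^{\infty} 2(a+t)\,\p(Z \geq a + t)\,dt.$$
So it suffices to control $\int_0^\infty 2(a+t)\,\p(Z \geq a+t)\,dt$ and show it is at most $E_m$. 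This is where Bousquet's bound enters: it gives $\p(Z \geq \E(Z) + r) \leq e^{-x}$ whenever $r = \sqrt{\tfrac{2}{n}(v^2 + 2b\,\E(Z))x} + \tfrac{bx}{3n}$. The standard move is to invert this — i.e. for a given $t$, find $x = x(t)$ such that the corresponding $r$ equals $t$ — or, more cheaply, to use the crude inequality that $\p(Z \geq \E(Z)+t) \leq e^{-x}$ as soon as $t \geq \sqrt{\tfrac{2}{n}(v^2 + 2b\E(Z))x}$ and $t \geq \tfrac{bx}{3n}$, i.e. as soon as $x \leq \min\{\tfrac{nt^2}{2(v^2+2b\E(Z))}, \tfrac{3nt}{b}\}$. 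Plugging this $x(t)$ into the integral and splitting into the ``sub-Gaussian'' regime (small $t$, where $x(t) = nt^2/(2(v^2+2b\E(Z)))$ dominates) and the ``sub-exponential'' regime (large $t$, where $x(t) = 3nt/b$) reduces everything to Gaussian and exponential moment integrals of the form $\int t\, e^{-ct^2}\,dt$ and $\int t\, e^{-ct}\,dt$.

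Next I would carry out those two elementary integrals. In the first regime one gets contributions of order $\sqrt{v^2 + 2b\E(Z)}\cdot(\sqrt{v^2+2b\E(Z)} + \E(Z))/\sqrt n$, which after using $\E(Z) \leq \sqrt{\E(Z^2)} = \sqrt{D/n}$ and $\sqrt{2b\E(Z)} \lesssim (bD/n)^{1/4}\cdot$ (something) gives the terms $\sqrt{v^2 D}/n$ and $D^{3/4}e_b^{1/4}/n$ appearing in $nE_m$; the $\sqrt{2\pi}$ comes from the Gaussian integral $\int_0^\infty e^{-t^2/2}\,dt = \sqrt{\pi/2}$ times $2$, and the $2.1$ absorbs the cross terms. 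In the second regime one gets a term of order $b^2/n^2 = e_b/n$, and the constant $225$ is what makes the bookkeeping close. Then (2) follows immediately: the lower bound $(\E Z)^2 \leq \E(Z^2)$ is Jensen, and the upper bound comes from writing $\E(Z^2) = \E(Z^2\mathbf{1}_{Z<\E Z}) + \E(Z^2\mathbf{1}_{Z \geq \E Z}) \leq (\E Z)^2\p(Z<\E Z) + (\E Z)^2\p(Z \geq \E Z) + E_m$ via (1).

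The main obstacle is purely quantitative rather than conceptual: one must choose the split point between the two regimes and estimate each piece so that the resulting constants are no worse than $225$, $2.1 + \sqrt{2\pi}$, and $\sqrt{15}$. The delicate point is that $\E(Z)$ appears both inside the variance proxy $v^2 + 2b\E(Z)$ and as the center $a$, so the self-referential bound $\E(Z) \leq \sqrt{D/n}$ must be used at exactly the right places, and the term $2b\E(Z) \leq 2\sqrt{b^2 D/n}\cdot\sqrt{1/n}$ must be repackaged as a power of $D^{3/4}e_b^{1/4}$ together with a $\sqrt{v^2 D}$ piece (using $v^2 \geq$ nothing, so one actually bounds $\sqrt{2b\E(Z)}$ directly). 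I expect that Klein \& Rio's lower-tail bound is not needed here (it will be used in later corollaries for the matching lower estimate on $\E(Z^2)$), and that only Bousquet's upper-tail bound plus the Fubini identity and two textbook integrals are required.
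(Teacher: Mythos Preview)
Your plan is correct and matches the paper's approach: layer-cake on $Z^2\mathbf{1}_{Z\ge\E Z}$, then feed in Bousquet's upper tail, then use $\E(Z)\le\sqrt{D/n}$; Klein--Rio is indeed not used here. The derivation of the second display from the first is exactly as you wrote.

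Two remarks on execution. First, your ``crude'' alternative is slightly misstated: from $t\ge\sqrt{\tfrac{2}{n}(v^2+2b\E Z)x}$ \emph{and} $t\ge \tfrac{bx}{3n}$ you only get $r(x)\le 2t$, not $r(x)\le t$, so the admissible $x$ is $\min\{\tfrac{nt^2}{8(v^2+2b\E Z)},\tfrac{3nt}{2b}\}$; this costs constant factors and makes hitting $225$, $2.1+\sqrt{2\pi}$, $\sqrt{15}$ harder. Second, the paper takes your first option (``invert'') but in the cleaner direction: it parameterizes the tail integral directly by the Bousquet deviation variable, substituting $x=(\E Z+\sqrt{2(v^2+2b\E Z)y/n}+by/(3n))^2$ into $\int_{(\E Z)^2}^\infty\p(Z>\sqrt{x})\,dx$. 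This produces four explicit integrals $\int_0^\infty y^{-1/2}e^{-y}dy=\sqrt\pi$, $\int_0^\infty y^{1/2}e^{-y}dy=\sqrt\pi/2$, $\int_0^\infty e^{-y}dy=\int_0^\infty ye^{-y}dy=1$, with no regime split. The resulting mixed powers of $\E Z$, $v$, $e_b$ are then repackaged using $a^\alpha b^{1-\alpha}\le\alpha a+(1-\alpha)b$ with a free parameter $\eta$, and the stated constants drop out at $\eta\approx0.088$. Your sub-Gaussian/sub-exponential split would also work, but the direct change of variables is what makes the bookkeeping close without pain.
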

\begin{proof}:
We have 
\begin{eqnarray*}
\E(Z^2{\bf 1}_{Z\geq\E(Z)})&=&\int_0^{\infty}\p(Z^2{\bf 1}_{Z\geq\E(Z)}>x)dx=\int_0^{\infty}\p(Z{\bf 1}_{Z\geq\E(Z)}>\sqrt{x})dx\\
&=&(\E(Z))^2\p\left( Z\geq\E(Z)\right)+\int_{(\E(Z))^2}^{\infty}\p(Z>\sqrt{x})dx
\end{eqnarray*}
Take $x=(\E(Z)+\sqrt{2(v^2+2b\E(Z))y/n}+by/(3n))^2$ in the previous integral, from Bousquet's version of Talagrand's inequality,
\begin{eqnarray*}
\E(Z^2{\bf 1}_{Z\geq\E(Z)})&\leq&\E(Z)\sqrt{\frac2{n}(v^2+2b\E(Z))}\int_0^{\infty}\frac{e^{-y}}{\sqrt{y}}dy+\frac{2v^2+14b\E(Z)/3}n\int_0^{\infty}e^{-y}dy\\
&&+\frac{b}{n}\sqrt{\frac{2}n(v^2+2b\E(Z))}\int_0^{\infty}e^{-y}\sqrt{y}dy+\frac{2b^2}{9n^2}\int_{0}^{\infty}ye^{-y}dy.
\end{eqnarray*}
Classical computations lead to 
$$\int_0^{\infty}\frac{e^{-y}}{\sqrt{y}}dy=2\int_0^{\infty}e^{-y}\sqrt{y}dy=\sqrt{\pi},\;\int_0^{\infty}e^{-y}dy=\int_{0}^{\infty}ye^{-y}dy=1.$$
Therefore, if $e_b=b^2/n$, using repeatedly the inequalities
\begin{equation}\label{**}
a^{\alpha}b^{1-\alpha}\leq\alpha a+(1-\alpha)b 
\end{equation}
and $\sqrt{a+b}\leq \sqrt{a}+\sqrt{b}$, we obtain, for all $\eta>0$,
$$\sqrt{ne_b}\E(Z)\leq \frac{e_b}{3\eta^2}+\frac{2\eta}{3}e_b^{1/4}(\sqrt{n}\E(Z))^{3/2},$$
$$(\sqrt{n}\E(Z))^{1/2}e_b^{3/4}\leq \frac{\eta}3e_b^{1/4}(\sqrt{n}\E(Z))^{3/2}+\frac{2e_b}{3\sqrt{\eta}}.$$
Thus
\begin{eqnarray*}
\E(Z^2{\bf 1}_{Z\geq\E(Z)})&\leq& \left(2v^2+\frac29e_b+v\frac{\sqrt{2\pi e_b}}2\right)\frac{1}{n}+\sqrt{\pi}\frac{\sqrt{\sqrt{n}\E(Z)}\left(e_b\right)^{3/4}}n\\
&&+\left(\frac{14}3\sqrt{e_b}+v\sqrt{2\pi}\right)\frac{\sqrt{n}\E(Z)}n+2\sqrt{\pi}\frac{(\sqrt{n}\E(Z))^{3/2}\left(e_b\right)^{1/4}}n\\
&\leq& \left(2+\eta\frac{\sqrt{2\pi}}4\right)\frac{v^2}n+\sqrt{\frac{2\pi}{n}}v\E(Z)+\left(\frac29+\frac{\sqrt{2\pi}}{4\eta}+\frac{2\sqrt{\pi}}{3\sqrt{\eta}}+\frac{14}{9\eta^2}\right)\frac{e_b}n\\
&&+\left(\eta\left(\frac{\sqrt{\pi}}3+\frac{28}9\right)+2\sqrt{\pi}\right)\frac{(\sqrt{n}\E(Z))^{3/2}\left(e_b\right)^{1/4}}n.
\end{eqnarray*}
Therefore, taking $\eta=0.088$, we obtain
\begin{equation*}
\E(Z^2{\bf 1}_{Z\geq\E(Z)})\leq2.1\frac{v^2}n+15^2\frac{e_b}n+\sqrt{2\pi}v\frac{\sqrt{n}\E(Z)}{n}+\sqrt{15}\frac{(\sqrt{n}\E(Z))^{3/2}\left(e_b\right)^{1/4}}n.
\end{equation*}
Finally, we use Cauchy-Schwarz inequality to obtain that $\sqrt{n}\E(Z)\leq(n\E(Z^2))^{1/2}=(D)^{1/2}$. Since $v^2\leq D$, we get (\ref{E1}).
\end{proof}
We deduce from this result the following concentration inequalities for $Z^2$
\begin{coro}\label{C3}
Let $e_b=b^2/n$. We have, for all $x>0$,
\begin{equation*}
\p\left(Z^2-\frac{D}n>\frac{D^{3/4}(e_b (19x)^2)^{1/4}+3\sqrt{Dv^2x})+3v^2x+e_b (19x)^2}n\right)\leq e^{-x}.
\end{equation*}
Moreover, for all $x>0$, with probability larger than $1-e^{-x}$,
\begin{equation}\label{ed}
\frac{D}n-Z^2\leq\frac{D^{3/4}e_b^{1/4}(\sqrt{15}+4.127\sqrt{x})+\sqrt{v^2D}(4.61+3\sqrt{x})+225e_b(6.2x^2+1)}n.
\end{equation}
\end{coro}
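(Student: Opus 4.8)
The starting point is to write $D=n\E(Z^2)$ and split
$$Z^2-\frac Dn=\Bigl(Z^2-(\E Z)^2\Bigr)+\Bigl((\E Z)^2-\E(Z^2)\Bigr),$$
so that Corollary \ref{C1} already controls the second bracket two-sidedly, namely $-E_m\le(\E Z)^2-\E(Z^2)\le0$, and only the first bracket, $Z^2-(\E Z)^2=(Z-\E Z)(Z+\E Z)$, requires work. I would control $Z-\E Z$ on the concentration events. Since $S$ is symmetric we have $Z\ge0$ and $\E Z\ge0$, so on the event of Bousquet's inequality (where $Z\le\E Z+u$ with $u=\sqrt{2(v^2+2b\E Z)x/n}+bx/(3n)$) one gets $Z^2\le(\E Z+u)^2$, hence $Z^2-(\E Z)^2\le 2u\E Z+u^2$; and on the event of Klein \& Rio's inequality (where $Z\ge\E Z-w$ with $w=\sqrt{2(v^2+2b\E Z)x/n}+8bx/(3n)$) a short sign discussion ($Z\ge\E Z$ makes the left side nonpositive, while $Z<\E Z$ gives $(\E Z-Z)(\E Z+Z)\le w\cdot2\E Z$) yields $(\E Z)^2-Z^2\le 2w\E Z$.

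For the upper tail I would note that $(\E Z)^2\le\E(Z^2)=D/n$, so it suffices to bound $Z^2-(\E Z)^2\le 2u\E Z+u^2$ on Bousquet's event. The recipe is then purely computational: expand $u$ using $\sqrt{a+b}\le\sqrt a+\sqrt b$ so that $u^2$ and $u\E Z$ become sums of monomials in $v^2,\E Z,b,x,n$; replace $\E Z$ by the Cauchy--Schwarz bound $\sqrt n\,\E Z\le\sqrt D$; use $e_b=b^2/n$ and $v^2\le D$ (the latter as in the proof of Corollary \ref{C1}); and redistribute every mixed monomial onto the four canonical pieces $v^2x$, $\sqrt{v^2Dx}$, $D^{3/4}(e_bx^2)^{1/4}$ and $e_bx^2$ by repeated use of Young's inequality (\ref{**}). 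Choosing the splitting weights to make the coefficients fit then gives the announced bound with the constants $3$, $3$ and $19$.

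For the lower tail I would write $D/n-Z^2\le E_m+\bigl((\E Z)^2-Z^2\bigr)\le E_m+2w\E Z$ on Klein \& Rio's event, using the two facts above and Corollary \ref{C1}. The $x$-free part of $E_m$ already supplies the contributions $225e_b/n$, $(2.1+\sqrt{2\pi})\sqrt{v^2D}/n$ and $\sqrt{15}\,D^{3/4}e_b^{1/4}/n$ to (\ref{ed}) (note $2.1+\sqrt{2\pi}\le4.61$), and $2w\E Z$ is handled exactly like $2u\E Z$ above — expand $w$, bound $\E Z$ by $\sqrt{D/n}$, redistribute by (\ref{**}) — the only difference being that the enlarged coefficient $8/3$ in $w$ is what produces the larger $x^2$ term. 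Summing the two contributions gives (\ref{ed}).

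The main obstacle will not be conceptual: beyond the factorization $Z^2-(\E Z)^2=(Z-\E Z)(Z+\E Z)$ and the elementary sign discussion on the concentration events, the whole proof is bookkeeping. The real effort is to track the numerical constants while distributing the cross terms (the monomials involving $b\E Z$, $v^2b\E Z$, $b^2x^2$, and so on) over the four canonical pieces with well-chosen Young exponents, so as to land on precisely the displayed constants.
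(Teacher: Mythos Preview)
Your plan is correct and matches the paper's own proof essentially line for line: the paper also passes through $(\E Z)^2$, uses Bousquet to get $Z^2\le(\E Z+u)^2$ for the upper tail, uses Klein--Rio together with Corollary~\ref{C1} (the $E_m$ term) for the lower tail, bounds $\E Z$ by $\sqrt{D/n}$, and then redistributes all cross monomials by repeated Young inequalities with a tuned parameter ($\eta=0.07$ in the upper tail, $\eta=0.0357$ in the bound of $2\E(Z)r(x)$). Your sign discussion for the lower tail is in fact slightly cleaner than the paper's, which restricts first to $r(x)<\E Z$ before squaring and leaves the complementary case (where the bound is trivially nonpositive) implicit.
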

\begin{proof}:
From Bousquet's version of Talagrand's inequality and from $(\E(Z))^2\leq\E(Z^2)$, we obtain that, for all $x>0$, with probability larger than $1-e^{-x}$, $Z^2-D/n$ is not larger than
\begin{equation*}
\frac{4D^{3/4}(e_b x^2)^{1/4}+\sqrt{D}(14\sqrt{e_b x^2}/3+2\sqrt{2v^2x})+4D^{1/4}(e_b x^2)^{3/4}/3+3v^2x+e_b x^2/3}n.
\end{equation*}
We use repeatedly the inequality $a^{\alpha}b^{1-\alpha}\leq \alpha a+(1-\alpha)b$ to obtain that, with probability at least $1-e^{-x}$,  $Z^2-D/n$ is not larger than
\begin{equation*}
\frac{(4+32\eta/9)D^{3/4}(e_b x^2)^{1/4}+2\sqrt{2}\sqrt{Dv^2x}+3v^2x+(3+14/\eta^2+8/\sqrt{\eta})e_b x^2/9}n.
\end{equation*}
For $\eta=0.07$, this gives
\begin{equation*}
Z^2-\frac{D}n>\frac{D^{3/4}(e_b (19x)^2)^{1/4}+2\sqrt{2}\sqrt{Dv^2x}+3v^2x+e_b (19x)^2}n.
\end{equation*}
For the second one we use Klein's version of Talagrand's inequality to obtain, for all $x>0$ such that $r(x)=\sqrt{2(v^2+2b\E(Z))x/n}+8bx/3n<\E(Z)$,
\begin{equation*}
\p\left(Z^2<\left(\E(Z)-r(x)\right)^2\right)\leq e^{-x}.
\end{equation*}
We have $\left(\E(Z)-r(x)\right)^2=(\E(Z))^2-2\E(Z)r(x)+r(x)^2\geq (\E(Z))^2-2\E(Z)r(x)$, thus 
\begin{equation*}
\p\left(Z^2<(\E(Z))^2-2\E(Z)r(x)\right)\leq e^{-x}.
\end{equation*}
From the previous corollary, $(\E(Z))^2\geq\E(Z^2)-E_m$, thus 
\begin{equation*}
\p\left(Z^2<\E(Z^2)-E_m-2\E(Z)r(x)\right)\leq e^{-x}.
\end{equation*}
In order to conclude the proof of \ref{C3}, just remark that
\begin{eqnarray*}
2\E(Z)r(x)&\leq&\frac{4D^{3/4}(e_b x^2)^{1/4}+3\sqrt{Dv^2x}+16\sqrt{De_b x^2}/3}n\\
&\leq&\frac{(4+32\eta/9)D^{3/4}(e_b x^2)^{1/4}+3\sqrt{Dv^2x}+16/(9\eta^2)e_b x^2}n.
\end{eqnarray*}
For $\eta=0,0357$, we obtain (\ref{ed}).
\end{proof}
Finally, we have obtained the following result for the concentration of $Z^2$ around its mean
\begin{coro} \label{C4}
For all $x>0$,
\begin{equation*}
\p\left(Z^2-\frac{D}n>\frac{D^{3/4}(e_b (19x)^2)^{1/4}+3\sqrt{Dv^2x}+3v^2x+e_b (19x)^2}n\right)\leq e^{-x}.
\end{equation*}
\begin{equation*}
\p\left(Z^2-\frac{D}n<-\frac{8D^{3/4}(e_b x^2)^{1/4}+7.61\sqrt{v^2Dx}+e_b (40.25x)^2}n\right)\leq ee^{-x}.
\end{equation*}
\end{coro}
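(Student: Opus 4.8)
The plan is to obtain the upper tail of Corollary~\ref{C4} by simply reading it off from Corollary~\ref{C3}, and to obtain the lower tail from the sharper (but less clean) bound~(\ref{ed}) by a one-line case split on the size of $x$, absorbing the $x$-independent terms of~(\ref{ed}) into its $x$-dependent terms. For the first inequality there is literally nothing to do: its right-hand side is identical to that of the first inequality of Corollary~\ref{C3}, so it is that inequality verbatim.

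For the second inequality I would distinguish two ranges. If $0<x\leq 1$, then $ee^{-x}\geq ee^{-1}=1$, so the probability bound is trivially true. If $x\geq 1$, I would start from~(\ref{ed}): with probability larger than $1-e^{-x}\geq 1-ee^{-x}$,
$$\frac{D}n-Z^2\leq\frac{D^{3/4}e_b^{1/4}(\sqrt{15}+4.127\sqrt{x})+\sqrt{v^2D}(4.61+3\sqrt{x})+225e_b(6.2x^2+1)}n.$$
Now use $x\geq 1$ termwise: $\sqrt{15}\leq 3.873\sqrt{x}$, so $\sqrt{15}+4.127\sqrt{x}\leq 8\sqrt{x}$; $4.61\leq 4.61\sqrt{x}$, so $4.61+3\sqrt{x}\leq 7.61\sqrt{x}$; and $1\leq x^2$, so $225(6.2x^2+1)\leq 225\cdot 7.2\,x^2=1620\,x^2\leq (40.25)^2x^2$. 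Substituting turns the right-hand side above into exactly $\bigl(8D^{3/4}(e_bx^2)^{1/4}+7.61\sqrt{v^2Dx}+e_b(40.25x)^2\bigr)/n$, which is the announced bound, and the probability has already been relaxed to $1-ee^{-x}$.

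I expect no real obstacle here: the argument is pure bookkeeping. The only thing worth flagging is that the numerical constants in~(\ref{ed}) and in the statement have been tuned so that the three absorptions above are valid precisely on $\{x\geq 1\}$, which is exactly the complement of the range where the factor $e$ in $ee^{-x}$ already renders the claim vacuous; and, as in the proof of Corollary~\ref{C3}, when the right-hand side exceeds $D/n$ the bound on $Z^2$ is automatic because $Z\geq 0$.
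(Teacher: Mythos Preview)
Your proposal is correct and matches the paper's own proof essentially line for line: the paper also reads the upper tail directly from Corollary~\ref{C3}, declares the lower tail trivial for $x\leq 1$ because $ee^{-x}\geq 1$, and for $x>1$ invokes~(\ref{ed}) using $\sqrt{x}>1$ and $x^2>1$ to absorb the constant terms. Your write-up simply makes the three numerical absorptions explicit, which the paper leaves to the reader.
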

\begin{proof}:
In order to obtain the second inequality, we remark that the inequality is trivial when $x\leq 1$, thus we only have to use (\ref{ed}) for $x>1$ and then $\sqrt{x}>1$ and $x^2>1$.
\end{proof}
We will use this lemma to obtain a concentration inequality for totally degenerate $U$-statistics of order 2. The following result generalizes a previous inequality due to Houdr\'e $\&$ Reynaud-Bouret \cite{HRB03} to random variables taking values in a measurable space.

\begin{lemma}\label{TL1}
Let $X,X_1,...,X_n$ be i.i.d random variables taking value in a measurable space $(\x,\mathcal{X})$ with common law $P$. Let $\mu$ be a measure on $(\x,\mathcal{X})$ and let $(t_{\lambda})_{\lambda\in\Lambda}$ be a set of functions in $L^2(\mu)$. Let $$B=\{t=\sum_{\lambda\in\Lambda}a_{\lambda}t_{\lambda},\;\sum_{\lambda\in\Lambda}a_{\lambda}^2\leq 1\},\;D=\E\left(\sup_{t\in B}(t(X)-Pt)^2\right),$$ 
$$v^2=\sup_{t\in B}\Var(t(X)),\;b=\sup_{t\in B}\left\|t\right\|_{\infty}\;\textrm{and}\;e_b=\frac{b^2}n.$$ 
Let
\begin{equation*}
U=\frac1{n(n-1)}\sum_{i\neq j=1}^n\sum_{\lambda\in\Lambda}(t_{\lambda}(X_i)-Pt_{\lambda})(t_{\lambda}(X_j)-Pt_{\lambda}).
\end{equation*}
Then the following inequality holds
\begin{equation}\label{Uin1}
\forall x>0,\;\p\left( U>\frac{5.31D^{3/4}(e_b x^2)^{1/4}+3\sqrt{v^2Dx}+3v^2x+e_b (19.1x)^2}{n-1}\right)\leq2e^{-x}.
\end{equation}
\begin{equation}\label{Uin2}
\forall x>0,\;\p\left( U<-\frac{9D^{3/4}(e_b x^2)^{1/4}+7.61\sqrt{v^2Dx}+e_b (40.3x)^2}{n-1}\right)\leq3.8e^{-x}.
\end{equation}
\end{lemma}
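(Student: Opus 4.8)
The plan is to reduce the statement to the concentration inequality for $Z^2=\sup_{t\in B}(\nu_n t)^2$ already established in Corollary \ref{C4}, combined with Bernstein's inequality (Proposition \ref{BI}) for a single bounded function. First I would introduce $T=\sum_{\lambda\in\Lambda}(t_\lambda-Pt_\lambda)^2$; by the Cauchy--Schwarz identity (\ref{CSI}) one has $T=\sup_{t\in B}(t(x)-Pt)^2$ pointwise, hence $\|T\|_\infty\leq 4b^2=4ne_b$ and, since $T\geq 0$, $\Var(T(X))\leq\|T\|_\infty\,PT\leq 4ne_bD$, where $D=PT=n\E(Z^2)$. The same identity gives $Z^2=\sum_{\lambda\in\Lambda}(\nu_n(t_\lambda))^2$, so Lemma \ref{TL3} applied with $L=\mathrm{id}$ to the system $(t_\lambda)_{\lambda\in\Lambda}$ yields the decomposition $Z^2=\tfrac1nP_nT+\tfrac{n-1}nU$, that is
$$U=\frac{nZ^2-P_nT}{n-1}=\frac{n\bigl(Z^2-D/n\bigr)-\nu_n(T)}{n-1},$$
using $P_nT=D+\nu_n(T)$.

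For the upper tail I would bound the two pieces separately: Corollary \ref{C4} (first inequality) controls $n(Z^2-D/n)$ on an event of probability at least $1-e^{-x}$, and Bernstein's inequality applied to $-T$ controls $-\nu_n(T)$ by $\sqrt{2\Var(T(X))x/n}+\|T\|_\infty x/(3n)\leq\sqrt{8e_bDx}+\tfrac43e_bx$ on an event of probability at least $1-e^{-x}$. On the intersection (probability at least $1-2e^{-x}$) this already bounds $U$ by
$$\frac{D^{3/4}\bigl(e_b(19x)^2\bigr)^{1/4}+3\sqrt{v^2Dx}+3v^2x+e_b(19x)^2+\sqrt{8e_bDx}+\tfrac43e_bx}{n-1},$$
and it only remains to absorb the two last terms into the first and fourth ones. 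Writing $\sqrt{e_bDx}=\bigl(D^{3/4}(e_bx^2)^{1/4}\bigr)^{2/3}\bigl(e_bx^{1/2}\bigr)^{1/3}$ and using $a^\alpha b^{1-\alpha}\leq\alpha a+(1-\alpha)b$ repeatedly with suitably chosen weights --- and disposing of the small values of $x$, for which $2e^{-x}$ is already close to $1$ so a crude bound suffices --- one reaches (\ref{Uin1}) with the displayed constants $5.31$ and $19.1$.

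The lower tail is handled symmetrically: from $-U=\tfrac{n(D/n-Z^2)+\nu_n(T)}{n-1}$ I would bound $n(D/n-Z^2)$ by the second inequality of Corollary \ref{C4}, which holds with probability at least $1-e\,e^{-x}$, and $\nu_n(T)$ by Bernstein's inequality applied to $+T$, valid with probability at least $1-e^{-x}$; the bad event then has probability at most $(1+e)e^{-x}\leq 3.8\,e^{-x}$, which is precisely where the constant $3.8$ in (\ref{Uin2}) comes from. The same absorption of $\sqrt{8e_bDx}$ and $\tfrac43e_bx$ into the $D^{3/4}(e_bx^2)^{1/4}$ and $e_bx^2$ terms then yields (\ref{Uin2}) with the constants $9$, $7.61$ and $40.3$. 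The structural content of the proof is entirely in the identity $U=(nZ^2-P_nT)/(n-1)$ and in the $Z^2$-concentration of Corollary \ref{C4}; I expect the only genuinely delicate part to be the last step, namely the bookkeeping of the numerical constants in these absorptions and the careful separation of the small-$x$ regime.
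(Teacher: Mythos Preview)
Your proposal is correct and follows essentially the same route as the paper: write $U=\frac{n}{n-1}\bigl(Z^2-\E(Z^2)-\tfrac1n\nu_n(T)\bigr)$, apply Corollary~\ref{C4} to $Z^2-D/n$ and Bernstein to $\nu_n(T)$, then absorb the Bernstein terms via $a^{\alpha}b^{1-\alpha}\leq\alpha a+(1-\alpha)b$ and dispose of $x\leq 1$ by triviality. One small remark: Lemma~\ref{TL3} as stated assumes an orthonormal system, which $(t_\lambda)_{\lambda\in\Lambda}$ need not be here, so you should obtain the decomposition $Z^2=\tfrac1nP_nT+\tfrac{n-1}{n}U$ by expanding the square directly (as the paper does) rather than by citing that lemma.
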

\begin{proof}:
Remark that, from Cauchy-Schwarz inequality,
\begin{equation*}
\sup_{t\in B}(\nu_n(t))^2=\left(\sup_{\sum a_{\lambda}^2\leq 1}\sum_{\lambda\in\Lambda}a_{\lambda}\nu_n(t_{\lambda})\right)^2=\sum_{\lambda\in\Lambda}(\nu_n(t_{\lambda}))^2.
\end{equation*}
For all $x$ in $\x$, from Cauchy-Schwarz inequality, 
$$\sup_{t\in B}(t(x)-Pt)^2=\sum_{\lambda}(t_{\lambda}(x)-Pt_{\lambda})^2,$$
in particular, $D=\sum_{\lambda\in\Lambda}\Var(\psi_{\lambda}(X)).$
Moreover, easy algebra leads to 
\begin{eqnarray*}
\sum_{\lambda\in\Lambda}(\nu_n(t_{\lambda}))^2&=&\frac{1}{n^2}\sum_{i=1}^n\sum_{\lambda\in\Lambda}(t_{\lambda}(X_i)-Pt_{\lambda})^2\\
&&+\frac{1}{n^2}\sum_{i\neq j=1}^n\sum_{\lambda\in\Lambda}(t_{\lambda}(X_i)-Pt_{\lambda})(t_{\lambda}(X_j)-Pt_{\lambda})\\
&=&\frac 1nP_n\left(\sum_{\lambda\in\Lambda}(t_{\lambda}-Pt_{\lambda})^2\right)+\frac{n-1}nU.
\end{eqnarray*}
Let $Z^2=\sup_{t\in B}(\nu_n(t))^2$, $T_{\Lambda}=\sum_{\lambda\in\Lambda}(t_{\lambda}-Pt_{\lambda})^2$, 
$$\E(Z^2)=\E\left(\frac 1nP_nT_{\Lambda}\right)=\frac{D}n.$$
Hence
$$U=\frac{n}{n-1}\left(Z^2-\E(Z^2)-\frac1n\nu_n(T_{\Lambda})\right).$$
From Corollary \ref{C4}, for all $x>0$,
$$\p\left(Z^2-\frac{D}n>\frac{D^{3/4}(e_b (19x)^2)^{1/4}+3\sqrt{v^2Dx}+3v^2x+e_b (19x)^2}n\right)\leq e^{-x}.$$
$$\p\left(Z^2-\frac{D}n<-\frac{8D^{3/4}(e_b (x)^2)^{1/4}+7.61\sqrt{v^2Dx}+e_b (40.25x)^2}n\right)\leq 2.8e^{-x}.$$
Moreover, from Bernstein inequality, for all $x>0$, 
$$\p\left(-\nu_nT_{\Lambda}>\sqrt{2De_b x}+\frac{e_b x}{3}\right)\leq e^{-x}.$$
$$\p\left(\nu_nT_{\Lambda}>\sqrt{2De_b x}+\frac{e_b x}{3}\right)\leq e^{-x}.$$
We apply inequality (\ref{**}) with $a=D^{3/4}(e_b x^2)^{1/4}$, $b=e_b \sqrt{x}$, $\alpha=2/3$ and we obtain
$$\p\left(-\nu_nT_{\Lambda}>\frac{2\sqrt{2}}3D^{3/4}(e_b x^2)^{1/4}+e_b\left(\frac{x+\sqrt{2x}}{3}\right)\right)\leq e^{-x}.$$
$$\p\left(\nu_nT_{\Lambda}>\frac{2\sqrt{2}}3D^{3/4}(e_b x^2)^{1/4}+e_b\left(\frac{x+\sqrt{2x}}{3}\right)\right)\leq e^{-x}.$$
Therefore, for all $x>0$,
$$\p\left(U>\frac{5.31D^{3/4}(e_b x^2)^{1/4}+3\sqrt{v^2Dx}+3v^2x+e_b \left((19x)^2+(x+\sqrt{2x})/3\right)}{n-1}\right)\leq2e^{-x}.$$
$$\p\left(U<-\frac{9D^{3/4}(e_b x^2)^{1/4}+7.61\sqrt{v^2Dx}+e_b \left((40.25x)^2+(x+\sqrt{2x})/3\right)}{n-1}\right)\leq3.8e^{-x}.$$
These inequalities are trivial when $x<1$. We only use them when $x>1$ and we obtain (\ref{Uin1}) and (\ref{Uin2}) since $x<x^2$ and $\sqrt{x}<x^2$ when $x>1$.
\end{proof}
Let us now state the corollary of Bernstein's inequality that we used repeatedly in the article.
\begin{lemma}\label{TL2}
Let $X,X_1,...,X_n$ be i.i.d random variables taking value in a measurable space $(\x,\mathcal{X})$ with common law $P$. Let $\mu$ be a measure on $(\x,\mathcal{X})$ and let $(\psi_{\lambda})_{\lambda\in\Lambda}$ be an orthonormal system in $L^2(\mu)$. Let $L$ be a linear functional in $L^2(\mu)$ and let $B=\{t=\sum_{\lambda\in\Lambda}a_{\lambda}L(\psi_{\lambda}),\;\sum_{\lambda\in\Lambda}a_{\lambda}^2\leq 1\}$, $v^2=\sup_{t\in B}\Var(t(X))$, $b=\sup_{t\in B}\left\|t\right\|_{\infty}$ and $e_b=b^2/n$. Let $u$ be a function in $S$, the linear space spanned by the functions $(\psi_{\lambda})_{\lambda\in\Lambda}$ and let $\eta>0$.
Then the following inequality holds
\begin{equation}\label{Uin}
\forall x>0,\;\p\left( \nu_n(L(u))>\frac{\eta}2\|u\|^2+\frac{2v^2x+e_b x^2/9}{\eta n}\right)\leq e^{-x}.
\end{equation}
\end{lemma}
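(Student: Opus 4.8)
The plan is to reduce the statement to a single application of Bernstein's inequality (Proposition \ref{BI}) to the function $L(u)$, followed by two uses of the weighted arithmetic--geometric mean inequality to reshape the resulting deviation term into the announced form.

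First I would treat the trivial case $u=0$ separately: since $L$ is linear, $L(u)=0$, hence $\nu_n(L(u))=0$, while the right-hand side of (\ref{Uin}) is nonnegative, so the claimed bound holds (the relevant event having probability $0$). Assume now $u\neq 0$. Writing $u=\sum_{\lambda\in\Lambda}a_\lambda\psi_\lambda$, orthonormality of $(\psi_\lambda)_{\lambda\in\Lambda}$ gives $\|u\|^2=\sum_{\lambda\in\Lambda}a_\lambda^2$, and linearity of $L$ gives $L(u)=\sum_{\lambda\in\Lambda}a_\lambda L(\psi_\lambda)$. Therefore $L(u)/\|u\|=\sum_{\lambda\in\Lambda}(a_\lambda/\|u\|)L(\psi_\lambda)$ with $\sum_{\lambda\in\Lambda}(a_\lambda/\|u\|)^2=1$, so $L(u)/\|u\|$ belongs to $B$. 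By the definitions of $v^2$ and $b$ this yields $\Var(L(u)(X))\leq\|u\|^2 v^2$ and $\|L(u)\|_\infty\leq\|u\|\,b$.

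Next I would apply Proposition \ref{BI} to $t=L(u)$: for all $x>0$, with probability at least $1-e^{-x}$,
$$\nu_n(L(u))\leq\sqrt{\frac{2\Var(L(u)(X))x}{n}}+\frac{\|L(u)\|_\infty x}{3n}\leq\sqrt{\frac{2\|u\|^2 v^2 x}{n}}+\frac{\|u\|\,b\,x}{3n}.$$
Since $b=\sqrt{n e_b}$, the last term equals $\|u\|\sqrt{e_b}\,x/(3\sqrt n)$. Applying the elementary inequality $ac\leq\frac{\eta}{4}a^2+\frac{1}{\eta}c^2$ (i.e. $(\tfrac{\sqrt\eta}{2}a-\tfrac{1}{\sqrt\eta}c)^2\geq 0$) once with $a=\|u\|$, $c=\sqrt{2v^2 x/n}$ and once with $a=\|u\|$, $c=\sqrt{e_b}\,x/(3\sqrt n)$, I obtain
$$\sqrt{\frac{2\|u\|^2 v^2 x}{n}}+\frac{\|u\|\sqrt{e_b}\,x}{3\sqrt n}\leq\frac{\eta}{2}\|u\|^2+\frac{2v^2 x}{\eta n}+\frac{e_b x^2}{9\eta n},$$
which is exactly the right-hand side of (\ref{Uin}); combining the two displays finishes the proof. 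There is no real obstacle here: the only points needing a little attention are the normalization step that places $L(u)/\|u\|$ in $B$ (so that $v^2$ and $b$ control the variance and the sup-norm of $L(u)$) and keeping track of which $\eta$-split produces which of the two terms $2v^2 x$ and $e_b x^2/9$.
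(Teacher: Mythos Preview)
Your proof is correct and follows essentially the same route as the paper: Bernstein's inequality applied to $L(u)$, the normalization $L(u)/\|u\|\in B$ to control the variance and sup-norm, and then an AM--GM split. The only cosmetic difference is that the paper applies a single Young inequality $\|u\|\cdot A\leq\frac{\eta}{2}\|u\|^2+\frac{1}{2\eta}A^2$ to the whole Bernstein deviation $A=\sqrt{2v^2x/n}+bx/(3n)$ and then uses $(a+b)^2\leq 2a^2+2b^2$, whereas you split each of the two summands separately with $ac\leq\frac{\eta}{4}a^2+\frac{1}{\eta}c^2$; the resulting bound is identical.
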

\begin{proof}:
From Bernstein's inequality, 
$$\forall x>0,\;\p\left(\nu_n(L(u))>\sqrt{\frac{2\Var(L(u)(X))x}{n}}+\frac{\left\|L(u)\right\|_{\infty}x}{3n}\right)\leq e^{-x}.$$
Since $t=L(u/\|u\|)$ belongs to $B$,
\begin{eqnarray*}
\sqrt{\frac{2\Var(L(u)(X))x}{n}}+\frac{\left\|L(u)\right\|_{\infty}x}{3n}&=&\|u\|\left(\sqrt{\frac{2\Var(t(X))x}{n}}+\frac{\left\|t\right\|_{\infty}x}{3n}\right)\\
&\leq&\frac{\eta}2\|u\|^2+\frac1{2\eta}\left(\sqrt{\frac{2v^2x}{n}}+\frac{bx}{3n}\right)^2.
\end{eqnarray*}
We conclude the proof using the inequality $(a+b)^2\leq 2a^2+2b^2.$
\end{proof}

\bibliographystyle{plain}
\bibliography{bibliolerasle}
\end{document}